\newtheorem{counter}{counter}[section]
\newtheorem{theorem}[counter]{Theorem}
\newtheorem{corollary}[counter]{Corollary}
\newtheorem{lemma}[counter]{Lemma}
\newtheorem{proposition}[counter]{Proposition}
\theoremstyle{definition}
\newtheorem{definition}[counter]{Definition}
\newtheorem{conjecture}[counter]{Conjecture}
\newtheorem{notation}[counter]{Notation}
\newtheorem{remark}[counter]{Remark}
\author{Benjamin Gould and Yeqin Liu}
\date{}
\title{Cones of effective cycles on blow ups of projective spaces along rational curves}
\begin{document}
\maketitle


\begin{abstract}
    In this paper we examine the cones of effective cycles on blow ups of projective spaces along smooth rational curves. We determine explicitly the cones of divisors and 1- and 2-dimensional cycles on blow ups of rational normal curves, and strengthen these results in cases of low dimension. 

    Central to our results is the geometry of resolutions of the secant varieties of the curves which are blown up, and our computations of their effective cycles may be of independent interest.
\end{abstract}

\section{Introduction}




The theory of algebraic cycles on algebraic varieties has drawn wide and increasing attention in recent years. Because cycles of arbitrary codimension often do not exhibit properties enjoyed by divisors, their positivity properties require some care \cite{FL17a, FL17b}. For this reason, developing example classes whose cones of effective cycles can be described explicitly has proved both difficult and geometrically significant \cite{DELV11, CC15, Voi10, DJV13}. 

In this paper, we focus on the computations of the cones of effective cycles on blow ups $\mathrm{Bl}_C\mathbb{P}^r$ of projective spaces along smooth rational curves $C$. We explicitly determine the cones of effective divisors and 1- and 2-dimensional cycles when $C$ is a rational normal curve, and strengthen these results in lower-dimensional settings.

Our results progress from prior work on the cones of effective cycles on blowups of projective space along sets of points \cite{CLO16}. The intersection theory of our examples is more complicated, however, and a careful understanding of the geometry of the curve $C \subseteq \mathbb{P}^r$ is essential to our computations. 

Our main results are as follows. Let $\mathrm{Eff}^k\mathrm{Bl}_{C}\mathbb{P}^{r}$ denote the cone of effective cycles of codimension $k$ in the blow up $\mathrm{Bl}_{C}\mathbb{P}^{r}$, and similarly let $\mathrm{Eff}_k\mathrm{Bl}_{C}\mathbb{P}^{r}$ denote the cone of effective cycles of dimension $k$. Let $C$ be a rational normal curve; in this case we have $E \simeq \mathbb{P}^1 \times \mathbb{P}^{r-2}$ (see Section \ref{RNC-setup}), and we let $h_1$ and $h_2$ denote the pullbacks to $E$ of the hyperplane classes from the summands, respectively.

\begin{theorem}
Set $X_r = \mathrm{Bl}_C \mathbb{P}^r$. Let $j: E \hookrightarrow X_r$ denote the inclusion of the exceptional divisor and let $H$ denote the pullback to $X_r$ of the hyperplane class from $\mathbb{P}^r$.
    \begin{enumerate}
  \item (Proposition \ref{evendivisor} and \ref{odddivisor}) 
    $$\mathrm{Eff}^{1} (X_r)=\langle E,~ (\lfloor r/2 \rfloor +1)H- \lfloor r/2 \rfloor E \rangle, $$
    where the second class is the proper transform of the secant variety of $r/2$-planes (resp. the cone of the secant variety of $\lfloor r/2 \rfloor$-planes over a cone point on $C$) when $r$ is even (resp. odd). 
  \item (Proposition \ref{dim1})
    $$\mathrm{Eff}_{1} (X_r)=\langle j_{*} (h_{1} h_{2}^{r-3}), H^{2n}-2 j_{*} (h_{1} h_{2}^{r-3}) \rangle, $$
    where the second class is the proper transform of a $2$-secant line of $C$. 
  \item (Theorem \ref{eff2})
    $$
    \mathrm{Eff}_{2}(X_r)=
    \left\langle
    \begin{array}{c}
      (r-1)H^{r-2}-j_{*}(h_{2}^{r-3})-2rj_{*}(h_{2}^{r-4}h_{1}), \\
      H^{r-2}-3j_{*}(h_{2}^{4}h_{1}), j_{*}(h_{2}^{r-3}), j_{*}(h_{2}^{r-4}h_{1})
    \end{array}
    \right\rangle,
    $$
    where the first class is the proper transform of the cone of $C$ over a point on $C$, the second class is the proper transform of a $3$-secant plane of $C$. 
  \item (Corollary \ref{eff3}) The proper transform of the $2$-secant variety 
    $$[\mbox{Sec}_{2}(C)] = \binom{n-1}{2}H^{n-3}-(n-2)j_{*}(h_{2}^{n-4})-(n+2)(n-2)j_{*}(h_{2}^{n-5}h_{1})$$
    spans an extremal ray of $\mathrm{Eff}_{3}(X_{r})$. 
    
  \end{enumerate}
\end{theorem}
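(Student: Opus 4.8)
The plan is to reduce extremality of $[\mathrm{Sec}_2(C)]$ in $\mathrm{Eff}_3(X_r)$ to a statement we already understand, namely the description of $\mathrm{Eff}_2(X_r)$ in Theorem \ref{eff2}, by intersecting with a suitable divisor. First I would verify the stated formula for the class $[\mathrm{Sec}_2(C)]$ in $N_3(X_r)$: since $\mathrm{Sec}_2(C)$ is a degenerate variety whose proper transform meets $E$ in the locus parametrizing secant $2$-planes, one computes $[\mathrm{Sec}_2(C)] = \alpha H^{n-3} + \beta j_*(h_2^{n-4}) + \gamma j_*(h_2^{n-5}h_1)$ by intersecting against a basis of test curves/cycles — the degree $\binom{n-1}{2}$ is the classical degree of the secant variety of a rational normal curve, and $\beta,\gamma$ are read off from the resolution of $\mathrm{Sec}_2(C)$ discussed earlier in the paper (this is where the computations of effective cycles on resolutions of secant varieties, advertised in the abstract, get used).

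Next I would set up the extremality argument. Suppose $[\mathrm{Sec}_2(C)] = Z_1 + Z_2$ with $Z_i$ effective $3$-cycles; the goal is to show each $Z_i$ is proportional to $[\mathrm{Sec}_2(C)]$. The key step is to choose an effective divisor $D$ on $X_r$ — the natural candidate is the proper transform of the secant variety of $\lfloor r/2\rfloor$-planes from part (1) of the Theorem, or possibly $E$ itself — such that $D \cap \mathrm{Sec}_2(C)$ is (the support of) an irreducible $2$-cycle whose class spans an extremal ray of $\mathrm{Eff}_2(X_r)$ according to Theorem \ref{eff2}; the cone of $C$ over a point, or a class supported on $E$, are the candidates. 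Intersecting the decomposition with $D$ gives $[D]\cdot Z_1 + [D]\cdot Z_2 = [D\cap \mathrm{Sec}_2(C)]$, an equation of effective $2$-cycles; extremality of the right-hand side in $\mathrm{Eff}_2(X_r)$ forces $[D]\cdot Z_i$ to be a nonnegative multiple of that extremal class for each $i$. One then pairs with further nef or movable classes (e.g. powers of $H$ restricted appropriately, and the classes $j_*(h_2^{\bullet}h_1^{\bullet})$ which are known to be nef or boundary from the earlier sections) to pin down the remaining coefficients of $Z_i$ and conclude proportionality. A complementary approach, which I would run in parallel, is to exhibit an explicit nef class $N \in N^3(X_r)$ with $N\cdot[\mathrm{Sec}_2(C)] = 0$ and $N$ strictly positive on the rest of $\mathrm{Eff}_3(X_r)$; such an $N$ can be built as a combination of $H^3$, $j_*$ of monomials in $h_1,h_2$, and the dual of the secant divisor, with positivity checked on the (finitely many) generators once $\mathrm{Eff}_3$ is known to be polyhedral in this range — but since the corollary only claims extremality of one ray, not the full cone, the decomposition argument is the cleaner route.

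The main obstacle I anticipate is the intersection-theoretic bookkeeping on $X_r$: the normal bundle of $C$ in $\mathbb{P}^r$ is $\mathcal{O}(r+2)^{\oplus(r-1)}$ only after twisting appropriately, and the Chow ring of $X_r$ — generated by $H$ and the classes $j_*(h_1^a h_2^b)$ subject to relations coming from the Segre class of $E = \mathbb{P}^1\times\mathbb{P}^{r-2}$ and the relation $j_*(h_1^ah_2^b)\cdot H = j_*(h_1^ah_2^b|_{\text{restriction}})$ — requires care, especially the self-intersection formula $j^*j_* = \cdot c_1(N_{E/X_r})$ where $N_{E/X_r} = \mathcal{O}_E(-1)$ in the $\mathbb{P}^{r-2}$-direction. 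Concretely, the hard part is computing $[D]\cdot[\mathrm{Sec}_2(C)]$ and verifying it lands on a known extremal ray of $\mathrm{Eff}_2(X_r)$; this hinges on understanding scheme-theoretically how $\mathrm{Sec}_2(C)$ meets the divisor $D$ inside the blow up, which in turn rests on the geometry of the secant stratification of $C\subseteq \mathbb{P}^r$. Once that single intersection is under control, the rest of the argument is formal.
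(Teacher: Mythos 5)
Your outline of the class computation for $[\mathrm{Sec}_2(C)]$ matches the paper (Lemma \ref{push}, via the resolution $\psi:\mathbb{P}(\mathcal{E}_{n,2})\to S_2$), but your extremality argument has a genuine gap and misses the one idea the paper's proof actually turns on. The paper does not decompose $[S_2]$ and intersect with a divisor; it produces a single codimension-$3$ class, $H^{3}-(n-2)j_{*}(h_{1}h_{2})$ (the class of the proper transform of an $(n-2)$-secant $\mathbb{P}^{n-3}$), which by Remark \ref{notnefn-2} pairs nonnegatively with \emph{every} irreducible threefold except $S_2$ and pairs strictly negatively with $[S_2]$ (because every such secant linear space meets $\mathrm{Sec}_2(C)$ in excess dimension). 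Normalizing $[Y]=H^{n-3}-aj_*(h_2^{n-4})-bj_*(h_2^{n-5}h_1)$, this gives $a\le 1/(n-2)$ for all $Y\ne S_2$, while $[S_2]$ violates this bound; extremality is then immediate. The nonnegativity statement is proved by the same incidence-correspondence and tangent-space transversality argument as Lemma \ref{nefn-1}, and this is where the real geometric work lives. Your ``complementary approach'' of finding a \emph{nef} class vanishing on $[S_2]$ is the inverted (and harder) version of this; what is actually available is a non-nef class that is negative precisely on $S_2$.

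The concrete failure in your main route is the step ``intersecting the decomposition with $D$ gives an equation of effective $2$-cycles.'' Your candidate divisors ($E$, or the secant divisor $(\lfloor r/2\rfloor+1)H-\lfloor r/2\rfloor E$) are extremal, hence not nef, so $[D]\cdot Z_i$ need not be effective or even pseudoeffective when a component of $Z_i$ lies inside $D$: for instance $E\cdot j_*(h_2^{n-4})=j_*(h_2^{n-3})-(n+2)j_*(h_1h_2^{n-4})$ has a negative coefficient on an extremal ray of $\mathrm{Eff}_2(X_r)$. Since components of a putative decomposition of $[S_2]$ contained in $E$ (or in $\mathrm{Sec}_{\lfloor r/2\rfloor}(C)$) cannot be excluded a priori, the deduction ``extremality of $[D\cap\mathrm{Sec}_2(C)]$ in $\mathrm{Eff}_2$ forces each $[D]\cdot Z_i$ onto that ray'' has no foundation. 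Even where it applies, proportionality of $[D]\cdot Z_i$ to an extremal $2$-class only constrains $Z_i$ to a line in $N_3(X_r)$ if the map $Z\mapsto [D]\cdot Z$ is injective, which you do not address. You would need to replace $D$ by a class known to pair nonnegatively with all irreducible threefolds other than $S_2$ -- which is exactly the paper's Remark \ref{notnefn-2}, at which point the decomposition scaffolding becomes unnecessary.
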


The cones of effective cycles on $X_r$ are closely related to the secant varieties $\mathrm{Sec}_m(C)$ of the blow up center $C$. When $C$ is a rational normal curve, we show that for effective cycles of dimension 2 and 3 and in codimension 1, one extremal ray is generated by the secant variety, or a projective cone over one. We conjecture that in every even (resp. odd) dimensions, the cone of effective cycles has an extremal ray generated by the secant variety (resp. the cone of the secant variety over a point on $C$) of that dimension (Conjecture \ref{extremal}). 

For smooth curves $C \subseteq \mathbb{P}^r$ which are not rational normal curves, we address many cases of geometric interest in low dimensions. We summarize these results as follows.

\begin{theorem}
    \begin{enumerate}
        \item (Proposition \ref{effW}) Let $W_r$ denote the blow up of $\mathbb{P}^r$ along a line. Then
          \[\mathrm{Eff}^k(W_r) = \langle (H-E)^k, j_*(h_2^{k-1}), j_*(h_2^{k-1}h_1) \rangle,\]
          where $(H-E)^{k}$ is the class of the proper transform of a $\mathbb{P}^{n-k}$ containing the line. 
        \item (Proposition \ref{effY} and \ref{eff1Y}) Let $Y_d$ denote the blow up of $\mathbb{P}^3$ along a curve of class $(1,d-1)$ in a smooth quadric surface. For $d\geq 3$, we have
          \[\mathrm{Eff}^1(Y_d) = \langle E, 2H-E \rangle, \quad \mathrm{Eff}_1(Y_d) = \langle j_*(h_1), H^2-(d-1)j_*(h_1)\rangle, \]
          where $2H-E$ is the class of the proper transform of the smooth quadric surface, and $H^2-(d-1)j_*(h_1)$ is the class of the proper transform of a $(d-1)$-secant line of the curve. 
        \item (Proposition \ref{effZ}) Let $S_d \subseteq \mathbb{P}^3$ be a smooth surface of degree $d$ and let $Z(C_e)$ be the blowup of $\mathbb{P}^3$ along a curve $C_e \subseteq S_d$ of degree $e$. When $e \geq d^2$,
          \[\mathrm{Eff}^1(Z(C_e)) = \langle dH-E, E \rangle, \]
          where $dH-E$ is the class of $S_{d}$. 
    \end{enumerate}
\end{theorem}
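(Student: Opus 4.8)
The plan is to prove the two containments $\langle dH-E,\,E\rangle\subseteq\mathrm{Eff}^1(Z(C_e))$ and $\mathrm{Eff}^1(Z(C_e))\subseteq\langle dH-E,\,E\rangle$ separately; since $\mathrm{Pic}(Z(C_e))=\mathbb{Z}H\oplus\mathbb{Z}E$ and the classes $dH-E$ and $E$ are linearly independent in $N^1(Z(C_e))_{\mathbb R}\cong\mathbb R^2$, once both containments are known the two generators are automatically the extremal rays and nothing further need be checked. The first containment is immediate: $E$ is the exceptional divisor, and because $C_e\subseteq S_d$ with $S_d$ smooth, $S_d$ has multiplicity exactly one along $C_e$, so the total transform $\pi^*S_d=dH$ decomposes as $\widetilde{S_d}+E$; hence $dH-E=[\widetilde{S_d}]$ is effective, where $\pi\colon Z(C_e)\to\mathbb P^3$ denotes the blow-down.

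For the reverse containment it suffices, by decomposing an arbitrary effective divisor into irreducible components and using convexity of the target cone, to treat a single irreducible effective divisor $D$. If $D=E$ or $D=\widetilde{S_d}$ there is nothing to prove, so assume $\bar D:=\pi(D)$ is an irreducible surface of degree $a\ge 1$ distinct from $S_d$; writing $b\ge 0$ for the multiplicity of $\bar D$ at the generic point of $C_e$, the total transform formula gives $D=aH-bE$. The key step is to intersect $\bar D$ with $S_d$: these are distinct irreducible surfaces, so they meet properly and B\'ezout gives $\deg(\bar D\cap S_d)=ad$, while a local computation at the generic point of $C_e$ shows that $C_e$ occurs in the intersection cycle $\bar D\cdot S_d$ with multiplicity at least $b$. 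Hence $ad\ge b\deg(C_e)=be$, and now the hypothesis $e\ge d^2$ gives $ad\ge bd^2$, i.e.\ $a\ge bd$; combined with $a\ge 0$ this says exactly that $D=aH-bE$ lies in $\langle dH-E,\,E\rangle$ (one checks directly that this cone equals $\{aH-bE:\ a\ge 0,\ a\ge bd\}$). Summing over components finishes the proof.

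I expect the only point requiring real care to be the local multiplicity claim — that restricting a local defining equation of $\bar D$ (an element of $\mathfrak m_{C_e}^{\,b}$) to the smooth surface $S_d$ still vanishes to order at least $b$ along $C_e$, so that $\mathrm{mult}_{C_e}(\bar D\cdot S_d)\ge b$ — and this is precisely where smoothness of $S_d$ enters (it forces $S_d$ to have multiplicity one along $C_e$, so the intersection multiplicity is controlled by $\bar D$ alone). Conceptually the content is the choice to compare with $S_d$ via B\'ezout, together with the observation that $e=d^2$ is exactly the break-even value for the inequality $ad\ge be$ to force $a\ge bd$; for $e<d^2$ the argument degrades and one genuinely expects a larger effective cone. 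I would also verify the small cases $b=0$ (where $D\equiv aH$ is trivially in the cone) and $a=0$ (forcing $D$ to be a nonnegative multiple of $E$, via intersecting with the transform of a general line, whose class $H^2$ is movable) to confirm that the two edges of the cone are as claimed.
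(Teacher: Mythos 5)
Your proposal addresses only item (3) of the three-part statement. Items (1) and (2) --- the cones $\mathrm{Eff}^k(W_r)$ in all codimensions for the blow up along a line, and the cones $\mathrm{Eff}^1(Y_d)$, $\mathrm{Eff}_1(Y_d)$ for curves of class $(1,d-1)$ on a quadric --- are not touched at all, and they are not corollaries of your argument for (3). In the paper these require genuinely different inputs: for $W_r$ one must exhibit nef dual classes in every codimension (via Kleiman transversality and base-point-freeness of the relevant linear spaces), and for $Y_d$ the hard point is the nefness of $(d-1)H-E$, which the paper proves by producing an irreducible surface of degree $d-1$ containing $C_d$ and analyzing the residual curve $R$ in $S\cap Q$ to control the base locus on $E$. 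So as a proof of the stated theorem the proposal has a substantial gap; as a proof of Proposition \ref{effZ} alone it is complete.

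For item (3) itself your argument is correct, and it takes a different and arguably cleaner route than the paper's. You intersect the image surface $\bar D$ of degree $a$ directly with $S_d$, use B\'ezout to get $\deg(\bar D\cdot S_d)=ad$, and observe that $C_e$ appears in this $1$-cycle with multiplicity at least $b$ because the local equation of $\bar D$ lies in $\mathfrak m_{C_e}^{\,b}$ and restricts to an element of valuation at least $b$ in the DVR $\mathcal O_{S_d,\eta_{C_e}}$ (smoothness of $S_d$ making its local equation a regular parameter); this yields $ad\ge be$ and hence $a\ge bd$ when $e\ge d^2$. The paper instead intersects the proper transform $\widetilde X$ with proper transforms of curves $D\in|mH-C|_S$, computes $(D\cdot C)_S$ by adjunction on $S$, bounds $2p_g(C)-2$ by projecting $C$ to $\mathbb P^2$, and lets $m\to\infty$ to recover the same asymptotic inequality $a\ge (e/d)\,b$. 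Your version avoids the limiting argument and the genus estimate entirely, at the cost of the (correctly identified and correctly resolved) local multiplicity computation at the generic point of $C_e$; both arguments hit the same threshold $e=d^2$.
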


\section{Preliminaries}

In this section we collect some necessary facts and set some notation. Let $X$ be any smooth projective variety. The numerical ring $\mathrm{Num}^{*}(X)$ is defined to be the Chow ring modulo numerical equivalence. Inside $\mathrm{Num}^{k}(X)_{\mathbb{R}}=\mathrm{Num}^{k}(X)\otimes \mathbb{R}$, we define the \emph{cone of codimension-$k$ effective cycles} to be
$$\mbox{Eff}^{k}(X)=\sum \mathbb{R}_{+}\gamma,~ \gamma=[Y] \mbox{ for some codimension $k$ subvariety }Y\subset X. $$
If $\mbox{Eff}^{k}(X)$ is finite polyhedral with extremal rays generated by classes $\gamma_{1}, \cdots, \gamma_{n}$, then we denote $\mbox{Eff}^{k}(X)=\langle \gamma_{1}, \cdots, \gamma_{n} \rangle$.

Let $E$ be a vector bundle over $X$. In this paper, $\mathbb{P}(E)=\mathrm{Proj}_{X}(\mathrm{Sym}^{\bullet}(E^{\vee}))$ parametrizes 1-dimensional \emph{subspaces} of $E$. Let $f: \mathbb{P}(E) \rightarrow X$ be the projection. The tautological quotient $f^{*}(E^{\vee}) \longrightarrow \mathcal{O}_{\mathbb{P}(E)}(1)$ satisfies
$$E^{\vee} \longrightarrow f_{*}(\mathcal{O}_{\mathbb{P}(E)}(1))$$
is an isomorphism.

\subsection{Chow ring of blow ups}
Here we recall the Chow ring of the blow up of a smooth variety along a smooth center.
\begin{proposition}[\cite{EH16}, Prop. 13.12]\label{chowblowup}
  Let $i: Y \hookrightarrow X$ be an inclusion of smooth varieites and $\pi: \widetilde{X}=\mathrm{Bl}_{Y}(X) \longrightarrow X$ be the blow up. Let $j: E\cong \mathbb{P}(N_{Y/X})\hookrightarrow \widetilde{X}$ be the inclusion of the exceptional divisor, and $\xi \in \mathrm{CH}^{1}(E)$ be the class of the line bundle $\mathcal{O}_{\pi}(1)$. The Chow ring $\mathrm{CH}^{*}(\widetilde{X})$ is generated by $\pi^{*}\mathrm{CH}^{*}(X)$ and $j_{*}\mathrm{CH}^{*}(E)$ as an abelian group. The multiplicative structure is given by
  \[j_*\gamma \cdot \pi^*\alpha = j_*(\pi|_E^*i^*\alpha \cdot \gamma),~ j_*\gamma \cdot j_*\delta = -j_*(\gamma \cdot \delta \cdot \xi), \quad \alpha\in \mathrm{CH}^{*}(X), ~\gamma, \delta\in \mathrm{CH}^{*}(E).\]
\end{proposition}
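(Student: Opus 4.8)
The plan is to deduce the proposition from standard intersection theory in two steps: first the additive statement that $\mathrm{CH}^{*}(\widetilde{X})$ is generated by $\pi^{*}\mathrm{CH}^{*}(X)$ and $j_{*}\mathrm{CH}^{*}(E)$, and then the two multiplicative identities, both of which are formal consequences of the projection formula and the self-intersection formula once one geometric fact about the blow-up is granted. I would organize the argument around the localization (excision) sequence and keep the only non-formal input isolated.

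\textbf{Additive generation.} First I would use that $\pi$ restricts to an isomorphism $\widetilde{X}\setminus E \xrightarrow{\ \sim\ } X\setminus Y =: U$. Excision gives right-exact sequences
\[\mathrm{CH}_{k}(Y) \xrightarrow{i_{*}} \mathrm{CH}_{k}(X) \to \mathrm{CH}_{k}(U) \to 0, \qquad \mathrm{CH}_{k}(E) \xrightarrow{j_{*}} \mathrm{CH}_{k}(\widetilde{X}) \to \mathrm{CH}_{k}(\widetilde{X}\setminus E) \to 0.\]
Given $\beta \in \mathrm{CH}_{k}(\widetilde{X})$, its image in $\mathrm{CH}_{k}(\widetilde{X}\setminus E) \cong \mathrm{CH}_{k}(U)$ lifts, by right-exactness of the first sequence, to some $\alpha \in \mathrm{CH}_{k}(X)$. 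Since $X$ and $\widetilde{X}$ are smooth (so $\pi$ is lci) and $\pi$ is birational, the Gysin pullback $\pi^{*}$ exists, commutes with restriction to opens, and satisfies $\pi_{*}\pi^{*}=\mathrm{id}$; in particular $\pi^{*}\alpha$ and $\beta$ have the same restriction to $\widetilde{X}\setminus E$. Hence $\beta - \pi^{*}\alpha$ maps to $0$ in $\mathrm{CH}_{k}(\widetilde{X}\setminus E)$, so by exactness of the second sequence $\beta - \pi^{*}\alpha = j_{*}\gamma$ for some $\gamma$. This shows every class is of the form $\pi^{*}\alpha + j_{*}\gamma$, which is (more than) the generation claim.

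\textbf{Multiplicative structure.} Next I would obtain both relations from the projection formula for the closed embedding $j$. For $\alpha \in \mathrm{CH}^{*}(X)$, the identity $\pi\circ j = i\circ \pi|_{E}$ yields $j^{*}\pi^{*}\alpha = \pi|_{E}^{*}i^{*}\alpha$, so the projection formula gives $j_{*}\gamma \cdot \pi^{*}\alpha = j_{*}(\gamma \cdot j^{*}\pi^{*}\alpha) = j_{*}(\pi|_{E}^{*}i^{*}\alpha \cdot \gamma)$. For the second relation, the projection formula gives $j_{*}\gamma \cdot j_{*}\delta = j_{*}(\gamma \cdot j^{*}j_{*}\delta)$, and the self-intersection formula for the divisor $E$ identifies $j^{*}j_{*}\delta = c_{1}(N_{E/\widetilde{X}})\cdot \delta$. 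Feeding in $N_{E/\widetilde{X}} \cong \mathcal{O}_{\mathbb{P}(N_{Y/X})}(-1) = \mathcal{O}_{\pi}(-1)$, so that $c_{1}(N_{E/\widetilde{X}}) = -\xi$, produces $j_{*}\gamma\cdot j_{*}\delta = -j_{*}(\gamma\cdot\delta\cdot\xi)$.

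\textbf{Main obstacle.} The only genuinely geometric ingredient — and hence the one ``hard'' step — is the identification $N_{E/\widetilde{X}} \cong \mathcal{O}_{\pi}(-1)$, equivalently $\mathcal{O}_{\widetilde{X}}(E)|_{E}\cong\mathcal{O}_{\pi}(-1)$. This comes out of the construction of the blow-up ($\widetilde{X}=\mathrm{Proj}_{X}\bigoplus_{n\ge 0} I_{Y}^{n}$, with exceptional divisor $E = \mathrm{Proj}_{Y}\bigoplus_{n\ge 0} I_{Y}^{n}/I_{Y}^{n+1} \cong \mathbb{P}(N_{Y/X})$ and $\mathcal{O}_{\widetilde{X}}(-E)$ restricting to the tautological quotient on $E$), and it is here that the smoothness of $Y$ in $X$ is essential, guaranteeing the normal cone is the vector bundle $N_{Y/X}$. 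Everything else is the diagram chase and the two projection-formula computations above; since all of these ingredients, including a careful treatment of the normal bundle, are developed in \cite{EH16}, the result is simply quoted from there.
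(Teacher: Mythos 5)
Your argument is correct and is exactly the standard proof of this result: the paper itself gives no proof, quoting the statement directly from \cite{EH16}, Prop.\ 13.12, and your sketch (excision plus $\pi_*\pi^*=\mathrm{id}$ for additive generation; projection formula, the self-intersection formula, and $\mathcal{O}_{\widetilde{X}}(-E)|_E\cong\mathcal{O}_\pi(1)$ for the two product rules) is precisely what that reference carries out. Your identification $c_1(N_{E/\widetilde{X}})=-\xi$ is also consistent with the subspace convention for $\mathbb{P}(-)$ fixed in the paper's preliminaries, so nothing further is needed.
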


\subsection{Secant varieties}

We recall some facts about the secant bundles and secant varieties of curves. Some good references are \cite{GL85, EL12, Kem21}. Let $(C, \mathcal{O}_{C}(1))$ be a smooth projective curve. The secant $k$-bundle $ \mathbb{P}(\mathcal{E}_{n,k})$ is a projective bundle over $C^{[k]}$, the Hilbert scheme of length $k$ subschemes in $C$. Over each length $k$ subscheme $ [p_{1}+\cdots +p_{k}]\in C^{(k)}$, the fiber of $\mathbb{P}(\mathcal{E}_{n, k})$ is naturally identified with the linear subspace $\overline{p_{1}\cdots p_{k}}\subset \mathbb{P}^{n}$ spanned by $p_{1}, \cdots, p_{k}$. The vector bundle $\mathcal{E}_{n,k}$ is defined as follows.

\begin{definition}\label{secantbundle}
Let $ D:=\{[p_{1}+ \cdots +p_{k}]\times r \in C^{[k]}\times C: r\in (p_{1}+\cdots + p_{k})\}$ and $ p: C^{[k]} \times C \rightarrow C^{[k]}$, $v: C^{[k]}\times C \rightarrow C$ be the projections. The secant bundle is defined to be 
$$\mathcal{E}=\mathcal{E}_{n, k}:=[p_{*}(v^{*}\mathcal{O}_{C}(n)|_{D})]^{*}. $$
\end{definition}

\begin{remark}\label{D}
  Let $D$ be as in Definition \ref{secantbundle}. The restriction map $v^{*}\mathcal{O}_{C}(n) \rightarrow v^{*}\mathcal{O}_{C}(n)|_{D}$ induces an inclusion
$$\mathbb{P}(\mathcal{E}_{n,k})=\mathbb{P}([\pi_{*}(v^{*}\mathcal{O}_{C}(n)|_{D})]^{*}) \overset{g}{\hookrightarrow} \mathbb{P}([v^{*}\mathcal{O}_{C}(n)]^{*})=C^{[k]}\times \mathbb{P}^{n}. $$
  Let $i: C \hookrightarrow \mathbb{P}^{n}$ be the inclusion. Then the following diagram is Cartesian:
  \[
\begin{tikzcd}
  D \arrow[r]\arrow[d] & C^{[k]}\times C \arrow[d, "\mathrm{id}\times i"] \\
  \mathbb{P}(\mathcal{E}_{n,k}) \arrow[r, "g"] & C^{[k]}\times \mathbb{P}^{n}.
\end{tikzcd}
  \]
\end{remark}

In this paper, when using Definition \ref{secantbundle} we always assume $C$ is a rational curve and $k<n$.

\begin{notation}\label{notationPE}
  In Definition \ref{secantbundle}, let $h$ be the hyperplane class of $ C^{[k]} \cong \mathbb{P}^{k}$, and 
  $\zeta$ be the relative $ \mathcal{O}(1)$ of $ \mathbb{P}(\mathcal{E}_{n,k})$ over $ C^{[k]}$. Let $q_{2}: C^{[k]}\times \mathbb{P}^{n} \rightarrow \mathbb{P}^{n}$ be the second projection and let $\phi=q_{2}\circ g$.
\end{notation}
The Chow rings of all $\mathbb{P}(\mathcal{E}_{n,k})$ are computable. In this paper we only need this when $C$ is a rational normal curve and $k=2$.

\begin{proposition}\label{chowsec}
   Let $C\subset \mathbb{P}^{n}$ be a rational normal curve. 
   The Chow ring of $\mathbb{P}(\mathcal{E}_{n,2})$ is
   $$ \mathrm{CH}^{*}(\mathbb{P}(\mathcal{E}_{n,2}))\cong \mathbb{Z} [h,\zeta]/(h^{3}, \zeta^{2}-(n-1)h\zeta+\binom{n}{2}h^{2}). $$
 \end{proposition}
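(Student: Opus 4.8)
The plan is to describe $\mathbb{P}(\mathcal{E}_{n,2})$ as a projective bundle over $C^{[2]}\cong \mathbb{P}^2$ and apply the standard presentation of the Chow ring of a projectivized vector bundle. Recall that for a vector bundle $\mathcal{E}$ of rank $n$ on a base $B$, one has $\mathrm{CH}^*(\mathbb{P}(\mathcal{E})) \cong \mathrm{CH}^*(B)[\zeta]/(\zeta^n - c_1(\mathcal{E}^{\vee})\zeta^{n-1} + \cdots)$ where $\zeta$ is the relative $\mathcal{O}(1)$ and the coefficients are the Chern classes of $\mathcal{E}^\vee$ (with the sign convention matching the one used in the paper's projectivization of subspaces, cf.\ the discussion after Proposition \ref{chowblowup}). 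Since $B = C^{[2]} \cong \mathbb{P}^2$ has Chow ring $\mathbb{Z}[h]/(h^3)$, the ring $\mathrm{CH}^*(\mathbb{P}(\mathcal{E}_{n,2}))$ is generated by $h$ and $\zeta$ subject to $h^3 = 0$ and a single quadratic relation $\zeta^2 - a\,h\zeta + b\,h^2 = 0$. The content of the proposition is therefore entirely in the identification $a = n-1$ and $b = \binom{n}{2}$, i.e.\ in computing $c_1$ and $c_2$ of $\mathcal{E}_{n,2}$ (or its dual) on $\mathbb{P}^2$.

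To compute these Chern classes I would use the defining exact sequence from Definition \ref{secantbundle}. Write $\mathcal{E}^\vee = p_*(v^*\mathcal{O}_C(n)|_D)$. The key is the short exact sequence on $C^{[2]}\times C$
\[
0 \longrightarrow v^*\mathcal{O}_C(n)\otimes \mathcal{O}(-D) \longrightarrow v^*\mathcal{O}_C(n) \longrightarrow v^*\mathcal{O}_C(n)|_D \longrightarrow 0,
\]
push it forward along $p$, and observe that $R^0 p_*$ of the middle term is $H^0(C,\mathcal{O}_C(n))\otimes \mathcal{O}_{C^{[2]}}$ (an $(n+1)$-dimensional trivial bundle, using $H^1 = 0$ on the rational curve) while $R^0 p_*$ of the left term vanishes and $R^1 p_*$ of it is a line bundle whose first Chern class I need to pin down. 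This yields a two-term complex expressing $\mathcal{E}^\vee$, hence $c(\mathcal{E}^\vee) = c(\mathcal{O}_{C^{[2]}}^{n+1}) \cdot c\big(R^1p_*(v^*\mathcal{O}_C(n)(-D))\big)^{-1}$, so the whole computation reduces to one line bundle on $\mathbb{P}^2$.

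The main obstacle, and the step requiring genuine care, is the geometry of the divisor $D \subseteq C^{[2]}\times C$ — it is the universal length-$2$ subscheme, and I need $p_*$ (really $R^1 p_*$) of $v^*\mathcal{O}_C(n)(-D)$, which amounts to knowing the restriction of $\mathcal{O}(D)$ to the fibers of $p$ and to a section of $p$. For $k=2$ the fiber of $p$ over a point of $C^{[2]} = \mathrm{Sym}^2 C \cong \mathbb{P}^2$ is just $C \cong \mathbb{P}^1$ and $D$ meets it in the corresponding length-$2$ subscheme, so $v^*\mathcal{O}_C(n)(-D)$ restricts to $\mathcal{O}_{\mathbb{P}^1}(n-2)$; since $C$ is rational normal this has no higher cohomology, so in fact $R^1p_* = 0$ here and $\mathcal{E}^\vee$ is a quotient of the trivial bundle $\mathcal{O}^{n+1}$ with $p_*(v^*\mathcal{O}_C(n)(-D))$ as kernel — a bundle of rank $n-1$ whose Chern classes I compute instead. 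I would identify $p_*(v^*\mathcal{O}_C(n)(-D))$ with (a twist of) $\mathcal{E}_{n-2,2}$-type data or compute directly via Grothendieck–Riemann–Roch / the known class of $D$ in $\mathrm{CH}^*(\mathbb{P}^2\times\mathbb{P}^1)$, extract $c_1 = (n-1)h$ and $c_2 = \binom{n}{2}h^2$ of $\mathcal{E}^\vee$, and conclude. As a sanity check I would verify the relation against low values of $n$ (e.g.\ $n=2$, where $\mathbb{P}(\mathcal{E}_{2,2})$ should be all of $C^{[2]} = \mathbb{P}^2$ embedded via $\zeta = h$, consistent with $\zeta^2 - h\zeta + h^2 = 0$ having the right shape after imposing $h^3=0$) and $n=3$ (a line bundle over $\mathbb{P}^2$, matching the resolution of $\mathrm{Sec}_2$ of the twisted cubic).
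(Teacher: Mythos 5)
Your proposal is correct and follows essentially the same route as the paper: both push the restriction sequence for $D$ forward along $p$, observe that the higher direct images vanish because the fibers give $\mathcal{O}_{\mathbb{P}^1}(n-2)$ with no $H^1$, and reduce the statement to the Chern classes of the rank-$(n-1)$ kernel $p_*(v^*\mathcal{O}_C(n)\otimes I_D)$ together with the projective bundle formula over $C^{[2]}\cong\mathbb{P}^2$. The paper finishes the one step you defer to Grothendieck--Riemann--Roch by noting $I_D\cong\mathcal{O}_{\mathbb{P}^2\times\mathbb{P}^1}(-1,-2)$, so the kernel is $\mathcal{O}_{\mathbb{P}^2}(-h)^{\oplus(n-1)}$ and $c(\mathcal{E}_{n,2}^\vee)=(1-h)^{-(n-1)}=1+(n-1)h+\binom{n}{2}h^2$; your passing claim that $R^0p_*$ of the subsheaf vanishes is a slip, but you correct it in the following sentence, and the $n=2$ sanity check is outside the paper's standing assumption $k<n$.
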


 \begin{proof}
   Consider the following exact sequence on $C^{[k]}\times C\cong \mathbb{P}^{k}\times \mathbb{P}^{1}$: 
\begin{equation}\label{secresolve}
  0 \rightarrow I_{D}\otimes v^{*}\mathcal{O}_{C}(n) \rightarrow v^{*}\mathcal{O}_{C}(n)\rightarrow v^{*}\mathcal{O}_{C}(n)|_{D} \rightarrow 0.
\end{equation}
Note that $I_{D}\cong \mathcal{O}_{\mathbb{P}^{k}\times \mathbb{P}^{1}}(-1,-k)$ and $\mathrm{H}^{1}(C, \mathcal{O}_{C}(n-k))=0$. Applying $p_{*}$ to (\ref{secresolve}) we have
$$ 0 \longrightarrow \mathcal{O}_{\mathbb{P}^{n}}(-h)\otimes  \mathrm{H}^{0}(C, \mathcal{O}_{C}(n-k)) \longrightarrow \mathcal{O}_{\mathbb{P}^{n}}\otimes  \mathrm{H}^{0}(C, \mathcal{O}_{C}(n)) \longrightarrow \mathcal{E}_{n,k}^{*} \longrightarrow 0. $$
Hence the total Chern class of $\mathcal{E}_{n,k}$ is
$$c(\mathcal{E}_{n,k})=\dfrac{1}{(1+h)^{n-k+1}}=\left[\sum_{j=0}^{\infty}(-h)^{j}\right]^{n-k+1}.$$
The proposition follows from the formula for the Chow ring of a projective bundle (e.g. \cite{EH16}, Theorem 9.6).
 \end{proof}

 We also need to know the cone of effective divisors of $ \mathbb{P}(\mathcal{E}_{n,2})$.

\begin{proposition}\label{eff1PE}
 Let $C\subset \mathbb{P}^{n}$ be a rational normal curve. Using Notation \ref{notationPE}, we have
  $$\mathrm{Eff}^{1}(\mathbb{P}(\mathcal{E}_{n,2}))=\langle h, D \rangle
  =\langle h, 2\zeta - (n-2)h\rangle. $$ 
\end{proposition}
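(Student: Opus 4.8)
The plan is to compute the cone $\mathrm{Eff}^1(\mathbb{P}(\mathcal{E}_{n,2}))$ as the cone spanned by the two obvious effective divisor classes — the pullback $h$ of a hyperplane from $C^{[2]}\cong\mathbb{P}^2$, and the class of the divisor $D$ under the identification $\mathbb{P}(\mathcal{E}_{n,2})\cong D$ coming from Remark~\ref{D} (or equivalently, the locus in $\mathbb{P}(\mathcal{E}_{n,2})$ lying over the secant-span membership condition) — and then to show these really are the extremal rays. First I would record that $D$, as a divisor on $C^{[2]}\times\mathbb{P}^1\cong\mathbb{P}^2\times\mathbb{P}^1$, has class $\mathcal{O}(1,2)$ (this is exactly the $I_D\cong\mathcal{O}_{\mathbb{P}^k\times\mathbb{P}^1}(-1,-k)$ used in the proof of Proposition~\ref{chowsec}, with $k=2$), and then translate this through the isomorphism $\mathbb{P}(\mathcal{E}_{n,2})\cong D$ to express $[D]$ in terms of $h$ and $\zeta$. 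Using the description of $\mathcal{E}_{n,2}^*$ as the quotient of $\mathcal{O}\otimes H^0(\mathcal{O}_C(n))$ and the relative $\mathcal{O}(1)$ class $\zeta$, the restriction of $\mathcal{O}_{\mathbb{P}^n}(1)$ under $\phi=q_2\circ g$ is $\zeta$, and the pullback of $\mathcal{O}_C(n)$-sections gives a computation yielding $[D]=2\zeta-(n-2)h$; since this is a genuinely effective divisor, $\langle h, 2\zeta-(n-2)h\rangle\subseteq\mathrm{Eff}^1$.

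For the reverse inclusion I would argue extremality of each of the two rays separately. The ray spanned by $h$ is a pullback of the (extremal, in fact nef-and-effective) hyperplane class from $\mathbb{P}^2$ under the projective bundle map, and is covered by the fibers of $\mathbb{P}(\mathcal{E}_{n,2})\to C^{[2]}$; intersecting an arbitrary effective divisor with a general fiber $\mathbb{P}^{n-1}$ (a linear subspace of $\mathbb{P}^n$) shows the $\zeta$-coordinate of any effective class is nonnegative, so no effective class can have strictly negative $h$-\,coefficient beyond what is forced — more precisely, writing an arbitrary effective divisor as $a\zeta+bh$, restriction to a fiber gives $a\geq 0$, which shows $h$ is extremal. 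For the other ray, the cleanest route is to exhibit a moving curve (or a family of curves covering $\mathbb{P}(\mathcal{E}_{n,2})$) which meets $D$ non-negatively and meets $h$ positively but pairs to zero with $2\zeta-(n-2)h$ up to scaling, or more directly to find a curve class $\gamma$ with $\gamma\cdot(2\zeta-(n-2)h)=0$ and $\gamma\cdot h>0$: the natural candidate is the image in $D\cong\mathbb{P}(\mathcal{E}_{n,2})$ of a curve of type $(0,1)$ — a point of $C^{[2]}$ crossed with the secant line — i.e. a line in a secant $\mathbb{P}^1\subset\mathbb{P}^n$ sitting over a fixed length-two subscheme. Such lines sweep out $D$, pair to $1$ with $\zeta$ and $0$ with $h$, hence pair to $2$ with $2\zeta-(n-2)h$; combined with a second covering family pairing to $0$ with $D$, the two families pin down both extremal rays by the standard duality between $\mathrm{Eff}^1$ and the cone of moving curves.

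The main obstacle I expect is the second extremality argument: producing a curve (or covering family of curves) that is genuinely \emph{nef against all effective divisors} and annihilates $2\zeta-(n-2)h$, rather than merely one that has the right intersection numbers — one must check the relevant curve class lies in the closed movable cone, which here should follow because the chosen lines through points of $D$ actually cover $D$ and $D$ is itself a moving divisor in the appropriate sense, but this requires care with the geometry of $D\subset\mathbb{P}^2\times\mathbb{P}^1$ and its embedding. The computation of $[D]=2\zeta-(n-2)h$ itself is routine given Proposition~\ref{chowsec} and Remark~\ref{D}: one just needs the first Chern class of the line bundle $\mathcal{O}(D)$ on $\mathbb{P}(\mathcal{E}_{n,2})$, which can be read off from the Cartesian square in Remark~\ref{D} by pulling back $\mathcal{O}_{\mathbb{P}^k\times\mathbb{P}^1}(1,k)$ along $g$ and identifying the two generators, or alternatively by a self-intersection/adjunction check against the known relation $\zeta^2=(n-1)h\zeta-\binom{n}{2}h^2$.
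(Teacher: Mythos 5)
Your proposal correctly identifies the two generators and handles the extremality of $h$ (restricting an effective class $a\zeta+bh$ to a general fiber of $\mathbb{P}(\mathcal{E}_{n,2})\to C^{[2]}$ gives $a\geq 0$, so $h$ is on the boundary), but there are two genuine problems. First, the computation of $[D]$: there is no isomorphism $\mathbb{P}(\mathcal{E}_{n,2})\cong D$ --- the former is $3$-dimensional, while $D$ is the $2$-dimensional surface inside $C^{[2]}\times C\cong\mathbb{P}^2\times\mathbb{P}^1$ that sits in $\mathbb{P}(\mathcal{E}_{n,2})$ via the Cartesian square of Remark \ref{D}. Knowing that $D$ has class $\mathcal{O}(1,2)$ on $\mathbb{P}^2\times\mathbb{P}^1$ does not directly translate into its divisor class on $\mathbb{P}(\mathcal{E}_{n,2})$, because $D$ arises there as an excess intersection with $C^{[2]}\times C$ (which has codimension $n-1$ in $C^{[2]}\times\mathbb{P}^n$), not as the restriction of a divisor from the ambient product. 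The paper instead observes that $D$ meets each fiber of $\mathbb{P}(\mathcal{E}_{n,2})\to C^{[2]}$ (a secant line) in two points, so $D=2\zeta-mh$, and pins down $m=n-2$ from $D\cdot\zeta^2=(\phi_*D)\cdot H^2=0$ (since $\phi(D)=C$ is only a curve) combined with the relations of Proposition \ref{chowsec}.

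Second, the extremality of $D$. The covering family you exhibit --- a point of $C^{[2]}$ crossed with its secant line, i.e.\ a fiber of the bundle projection --- pairs to $0$ with $h$ and to $2$ with $D$; it is exactly the family that certifies extremality of $h$, and it says nothing about $D$. The ``second covering family pairing to $0$ with $D$'' that your strategy requires (a movable curve class $\gamma$ with $\gamma\cdot(2\zeta-(n-2)h)=0$ and $\gamma\cdot h>0$) is never produced, and you yourself flag this as the main obstacle; exhibiting an actual covering family in that class is harder than the problem deserves. The efficient route, and the one the paper takes, is the opposite dual argument: $D$ is covered by curves \emph{contracted by} $\phi$, namely the fibers of $D\to C$ sending $r$ to $\{[r+p]:p\in C\}$. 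Such a curve $\gamma$ satisfies $\zeta\cdot\gamma=0$ and $h\cdot\gamma=1$, hence $D\cdot\gamma=-(n-2)<0$; an irreducible effective divisor covered by curves meeting it negatively spans an extremal ray of the pseudoeffective cone, which closes the argument with no appeal to movable-cone duality.
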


\begin{proof}
Note that $h$ is contracted by the projection $\mathbb{P}(\mathcal{E}_{n,2}) \rightarrow C^{[k]}$, and $D$ is contracted by $\phi$ since $\phi(D)=C\subset \mathbb{P}^{n}$. Hence $h, D$ span extremal rays. Since the restriction of $D$ to every fiber of $\pi$ consists of 2 points, we may write $D=2\zeta - mh$ for some $m\in \mathbb{Z}$. Note that $ \zeta=\phi^{*}\mathcal{O}_{\mathbb{P}^{n}}(H)$, we have
$$ D \cdot \zeta^{2}= D\cdot (\phi^{*}H^{2})=(\phi_{*}D) \cdot H^{2}=0.$$
By Proposition \ref{chowsec}, we have $m=n-2$.
\end{proof}

The secant variety $\text{Sec}_{m}(C) \subseteq \mathbb{P}^{r}$ of $C$ is swept out by $(m-1)$-planes $\mathbb{P}^{m-1} \subseteq \mathbb{P}^r$ which meet $C$ in $m$ points. This can be defined formally.

\begin{definition}\label{secantvariety}
Using Notation \ref{notationPE}, the secant variety of $(m-1)$-planes of $C\subset \mathbb{P}^{n}$ is
$$\mbox{Sec}_{m}(C)=\phi(\mathbb{P}(\mathcal{E}_{n, m})). $$
\end{definition}

\section{Blow up along Rational Normal Curves} \label{RNC-setup}
Every non-degenerate rational curve in a projective space is the image of a non-degenerate rational normal curve in a projective space of higher dimension under a linear projection. In this section, we study the cones of effective cycles where the blow up center is a non-degenerate rational normal curve. In order to present the main results we need to set up necessary notations. 

\subsection{Set-up and notation.} Let $C\subset \mathbb{P}^{r}$ be a rational normal curve, and $\pi: X_{r} = \text{Bl}_C \mathbb{P}^r \rightarrow \mathbb{P}^r$ be the blow up. The normal bundle of $C \subseteq \mathbb{P}^r$ is $N_{C/\mathbb{P}^r} \cong \mathcal{O}_{\mathbb{P}^1}(r+2)^{r-1}$ \cite{CR18}. The exceptional divisor of $j: E\hookrightarrow X_{r}$ can be described as $E=\mathbb{P} N_{C/\mathbb{P}^r} \cong \mathbb{P}^1 \times \mathbb{P}^{r-2}$. We have
$$\mathrm{Pic}(E)=\mathrm{Pic}(\mathbb{P}^{1}\times \mathbb{P}^{r-2})\cong \mathbb{Z}h_{1}\oplus \mathbb{Z}h_{2},$$
where $h_1, h_2 \in \text{Pic}(E)$ are the pullbacks of the hyperplane classes from $\mathbb{P}^1$ and $\mathbb{P}^{r-2}$ respectively. By Proposition \ref{chowblowup}, the Chow ring of $X_{r}$ is computed in the following proposition. 

\begin{proposition}\label{chowRNC}
  We have $-\xi=c_{1}(N_{E/X_{r}})=-h_{2}+(r+2)h_{1}\in \mathrm{Pic}(E)$. The Chow ring of $X_{r}$ is
  \[
    \mathrm{CH}^{*}(X_{r})=
    \dfrac{
      \left[ \dfrac{\mathbb{Z}[H]}{(H^{r+1})} \right]
      \oplus
      j_{*}\left[\dfrac{\mathbb{Z}[h_{1}, h_{2}]}{(h_{1}^{2}, h_{2}^{r-1})}\right]
    }
    {
      \left(
        \begin{array}{c}
          H\cdot j_{*}(\alpha)-j_{*}(\alpha \cdot (r+1)h_{1}), \\
          j_{*}(\beta)\cdot j_{*}(\gamma)+j_{*}(\xi \cdot \beta \cdot \gamma)
        \end{array}
      \right)
    },
    \quad
    \alpha, \beta, \gamma \in \dfrac{\mathbb{Z}[h_{1}, h_{2}]}{(h_{1}^{2}, h_{2}^{r-1})}. 
  \]
\end{proposition}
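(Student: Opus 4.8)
The plan is to apply Proposition \ref{chowblowup} directly to the case $X = \mathbb{P}^r$, $Y = C$ a rational normal curve, and then translate the abstract presentation into the explicit one. First I would record the relevant pieces of input data: $\mathrm{CH}^*(\mathbb{P}^r) = \mathbb{Z}[H]/(H^{r+1})$; the exceptional divisor $E = \mathbb{P}(N_{C/\mathbb{P}^r})$; and, since $N_{C/\mathbb{P}^r} \cong \mathcal{O}_{\mathbb{P}^1}(r+2)^{\oplus(r-1)}$, the identification $E \cong \mathbb{P}^1 \times \mathbb{P}^{r-2}$ with $\mathrm{CH}^*(E) = \mathbb{Z}[h_1,h_2]/(h_1^2, h_2^{r-1})$. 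Proposition \ref{chowblowup} then says $\mathrm{CH}^*(X_r)$ is generated as an abelian group by $\pi^* \mathrm{CH}^*(\mathbb{P}^r)$ (that is, by powers of $H$) and by $j_* \mathrm{CH}^*(E)$, which is exactly the numerator displayed in the statement; the two relation types in the quotient are the specializations of the two multiplication formulas $j_*\gamma \cdot \pi^*\alpha = j_*(\pi|_E^* i^* \alpha \cdot \gamma)$ and $j_*\gamma \cdot j_*\delta = -j_*(\gamma\delta\xi)$.

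The substantive computation is the identification of $\xi = c_1(\mathcal{O}_E(1))$ with $h_2 - (r+2)h_1$. I would argue this as follows. Under our convention (Section on projective bundles), $E = \mathbb{P}(N_{C/\mathbb{P}^r})$ with $N = \mathcal{O}(r+2)^{\oplus(r-1)}$; twisting by a line bundle does not change the projectivization but shifts $\mathcal{O}(1)$, so $E \cong \mathbb{P}(\mathcal{O}^{\oplus(r-1)}) = \mathbb{P}^1 \times \mathbb{P}^{r-2}$ carries a tautological $\mathcal{O}(1)$ whose class is $h_2$ on this trivial-bundle model, and tracking the twist by $\mathcal{O}_{\mathbb{P}^1}(r+2)$ gives $\xi = h_2 - (r+2)h_1$ — or equivalently I would use the standard fact that the normal bundle $N_{E/X_r} = \mathcal{O}_E(-1) = \mathcal{O}_\pi(-1)$, so $-\xi = c_1(N_{E/X_r})$, and compute $c_1(N_{E/X_r})$ directly. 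Either way one must be careful with the sign and with the (sub- versus quotient-line-bundle) convention fixed earlier in the paper; this bookkeeping is the one place an error could slip in, so I would double-check it by intersecting against a line in a fiber of $\pi|_E$ and against a section. Once $\xi$ is pinned down, the relation $j_*\gamma \cdot j_*\delta = -j_*(\xi\gamma\delta)$ becomes exactly the stated $j_*(\beta) j_*(\gamma) + j_*(\xi\beta\gamma) = 0$.

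Next I would unwind the mixed relation. Here $i: C \hookrightarrow \mathbb{P}^r$ and $\pi|_E: E \to C$, so for $\alpha = H$ we need $\pi|_E^* i^* H$. Since $C \subset \mathbb{P}^r$ is the rational normal curve of degree $r$, $i^* H = \mathcal{O}_{\mathbb{P}^1}(r)$, i.e. $i^*H = r \cdot [\mathrm{pt}]$ on $C \cong \mathbb{P}^1$; pulling back along $\pi|_E: \mathbb{P}^1 \times \mathbb{P}^{r-2} \to \mathbb{P}^1$ gives $\pi|_E^* i^* H = r h_1$. Hence $H \cdot j_*(\alpha) = j_*(r h_1 \cdot \alpha)$. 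This matches the stated relation $H \cdot j_*(\alpha) = j_*((r+1)h_1 \cdot \alpha)$ once we note that on $E$ one has $h_1^2 = 0$, so $r h_1 \cdot \alpha$ and $(r+1) h_1 \cdot \alpha$ differ only by the $h_1^2$-term, which vanishes — so the two forms of the relation are equal modulo the ideal $(h_1^2)$ already imposed. (I would state this explicitly so the reader sees why $(r+1)$ appears rather than $r$.)

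Finally I would assemble the pieces: the generators are $\pi^*(\mathbb{Z}[H]/(H^{r+1}))$ together with $j_*(\mathbb{Z}[h_1,h_2]/(h_1^2,h_2^{r-1}))$, giving the numerator; the relations are precisely the two families just computed, giving the denominator; hence $\mathrm{CH}^*(X_r)$ has the presentation in the statement. The main obstacle, as noted, is purely the sign/convention check in identifying $\xi = h_2 - (r+2)h_1$ (equivalently $-\xi = c_1(N_{E/X_r}) = (r+2)h_1 - h_2$); everything else is a direct substitution into Proposition \ref{chowblowup}.
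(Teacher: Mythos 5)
Your overall strategy is exactly the paper's (the paper gives no separate argument for this proposition; it simply invokes Proposition \ref{chowblowup} with $\mathrm{CH}^*(\mathbb{P}^r)=\mathbb{Z}[H]/(H^{r+1})$, $E=\mathbb{P}(N_{C/\mathbb{P}^r})\cong\mathbb{P}^1\times\mathbb{P}^{r-2}$, and the two multiplication rules), and your computation of $\xi$ is correct: with the subspace convention $\mathcal{O}_E(-1)=N_{E/X_r}$ is the tautological subbundle of $\pi|_E^*\mathcal{O}_{\mathbb{P}^1}(r+2)^{\oplus(r-1)}$, which twists to $-h_2+(r+2)h_1$, so $\xi=h_2-(r+2)h_1$.

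The genuine error is in your treatment of the mixed relation. You correctly compute $\pi|_E^*i^*H=rh_1$ (the rational normal curve has degree $r$), but your attempted reconciliation with the printed coefficient $(r+1)$ is false: $(r+1)h_1\cdot\alpha-rh_1\cdot\alpha=h_1\cdot\alpha$, which is not a multiple of $h_1^2$ and does not vanish --- take $\alpha=1$, where the two candidate relations give $H\cdot E=j_*(rh_1)$ versus $j_*((r+1)h_1)$, visibly different classes in $\mathrm{CH}^2(X_r)$. A direct check settles which is right: $j_*(h_2^{r-2})$ is a section of $\pi|_E$ over $C$, so its $H$-degree is $\deg C=r$, and indeed $H\cdot j_*(h_2^{r-2})=j_*(rh_1h_2^{r-2})=r[\mathrm{pt}]$; equivalently, for $r=3$ one gets $H\cdot E^2=-r=-\deg C$ only with the coefficient $r$. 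The coefficient $(r+1)$ in the displayed statement is a typo, and the rest of the paper silently uses $r$: the proof of Lemma \ref{psi*} writes $H\cdot E=nj_*(h_1)$, and Corollary \ref{relation} ($j_*(h_2^{r-2})=rH^{r-1}-(r+2)j_*(h_2^{r-3}h_1)$) only follows from $H\cdot j_*(h_2^{r-2})=r$. You should state the relation as $H\cdot j_*(\alpha)=j_*(\alpha\cdot rh_1)$ and flag the discrepancy, rather than manufacture an incorrect identification of the two expressions; as written, that step would not survive refereeing even though everything upstream of it is sound.
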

Note that $\mathrm{CH}^{1}(X_{r})$ has rank 2 and $\mathrm{CH}_{1}(X_{r})$ has rank 3, there is a numerical relation on $\mathrm{NS}_{1}(X_{r})$. By Proposition \ref{chowRNC}, we have the following corollary that will be used later.
\begin{corollary}\label{relation}
  In $\mathrm{Num}_{1}(X_{r})$, we have
$$ h_{2}^{r-2}=rH^{r-1}-(r+2)j_{*}(h_{2}^{r-3}h_{1}). $$
\end{corollary}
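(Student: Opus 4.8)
The relation to be proved is an identity in $\mathrm{Num}_1(X_r)$, where I read the left-hand side as $j_*(h_2^{r-2})$ (the class of a section of $E\to C$). By Proposition \ref{chowblowup} the three classes $H^{r-1}$, $j_*(h_2^{r-2})$, $j_*(h_2^{r-3}h_1)$ generate $\mathrm{CH}_1(X_r)$, and since $\mathrm{Pic}(X_r)=\mathbb{Z}H\oplus\mathbb{Z}E$, the intersection number of any $1$-cycle with any divisor is a $\mathbb{Z}$-combination of its intersections with $H$ and $E$. Hence two $1$-cycle classes agree in $\mathrm{Num}_1(X_r)$ precisely when they meet both $H$ and $E$ in the same integer. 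The plan is therefore to check that $j_*(h_2^{r-2})$ and $rH^{r-1}-(r+2)j_*(h_2^{r-3}h_1)$ pair identically with $H$ and with $E$; in fact the coefficients $r$ and $r+2$ are forced by these two tests, so the entire content of the proof is the computation of six intersection numbers.

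To carry this out I would use the two product rules of Proposition \ref{chowRNC} together with $h_1^2=0$, $h_2^{r-1}=0$, $h_1h_2^{r-2}=[\mathrm{pt}]$ on $E\cong\mathbb{P}^1\times\mathbb{P}^{r-2}$. The intersections with $H$ come from the rule $H\cdot j_*(\alpha)=j_*(\pi|_E^*i^*H\cdot\alpha)$: one gets $H^{r-1}\cdot H=1$, $H^{r-1}\cdot E=0$ (because $\pi|_E^*i^*H^{r-1}$ vanishes on the curve $C$), $j_*(h_2^{r-2})\cdot H=\deg C=r$, and $j_*(h_2^{r-3}h_1)\cdot H=0$ (its supporting curve is $\pi$-contracted). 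The intersections with $E=j_*(1)$ come from $j_*(\beta)\cdot j_*(\gamma)=-j_*(\xi\beta\gamma)$ with $\xi=h_2-(r+2)h_1$: expanding $\xi\cdot h_2^{r-2}$ and $\xi\cdot h_2^{r-3}h_1$ and collapsing to the point class yields $j_*(h_2^{r-2})\cdot E=r+2$ and $j_*(h_2^{r-3}h_1)\cdot E=-1$.

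Assembling these, both sides of the claimed identity meet $H$ in $r$ and meet $E$ in $r+2$, which by the first paragraph gives the equality in $\mathrm{Num}_1(X_r)$. There is no real obstacle here; the only thing to watch is the sign in $j_*\beta\cdot j_*\gamma=-j_*(\xi\beta\gamma)$ and the precise form of $\xi$ as a class on $\mathbb{P}^1\times\mathbb{P}^{r-2}$, both supplied by Propositions \ref{chowblowup} and \ref{chowRNC}. As a consistency check one may also push the asserted relation forward along $\pi$: the left-hand side maps isomorphically onto $C$ (class $r$ times a line) and $j_*(h_2^{r-3}h_1)$ is contracted, so $\pi_*$ of both sides is $r$ times the class of a line in $\mathbb{P}^r$.
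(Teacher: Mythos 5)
Your proof is correct and is essentially the argument the paper leaves implicit: since $\mathrm{Num}^{1}(X_{r})$ is spanned by $H$ and $E$, a relation among the three generators of $\mathrm{Num}_{1}(X_{r})$ is detected by pairing with $H$ and $E$, and your six intersection numbers all check out. Note only that you (correctly) take $\pi|_{E}^{*}i^{*}H = r h_{1}$, whereas Proposition \ref{chowRNC} as printed has coefficient $(r+1)$ --- a typo in the paper, as the statement of the corollary and the proof of Lemma \ref{psi*} both confirm.
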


Now we present our main result.

\begin{theorem}\label{effRNC}
  For every $r\geq 3$, using the presentation of $\mathrm{CH}^{*}(X_{r})$ in Proposition \ref{chowRNC}, we have
  \begin{enumerate}
  \item (Proposition \ref{evendivisor} and \ref{odddivisor})
    $$\mathrm{Eff}^{1} (\mathrm{Bl}_{C}\mathbb{P}^{r})=\langle E,~ (\lfloor r/2 \rfloor +1)H- \lfloor r/2 \rfloor E \rangle, $$
  \item (Proposition \ref{dim1})
    $$\mathrm{Eff}_{1} (\mathrm{Bl}_{C}\mathbb{P}^{r})=\langle j_{*} (h_{1} h_{2}^{r-3}), H^{2n}-2 j_{*} (h_{1} h_{2}^{r-3}) \rangle, $$
  \item (Theorem \ref{eff2})
    $$\mathrm{Eff}_{2} X_{r}=\langle (r-1)H^{r-2}-j_{*}(h_{2}^{r-3})-2rj_{*}(h_{2}^{r-4}h_{1}), H^{r-2}-3j_{*}(h_{2}^{4}h_{1}), j_{*}(h_{2}^{r-3}), j_{*}(h_{2}^{r-4}h_{1}) \rangle .$$
  \end{enumerate}
\end{theorem}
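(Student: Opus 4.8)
The three parts are proved separately in the following sections; we outline the strategy they share. The plan is to exploit that the relevant numerical groups are small: $\mathrm{Num}^1(X_r)$ has rank $2$, $\mathrm{Num}_1(X_r)$ has rank $2$ once the numerical relation of Corollary~\ref{relation} is imposed, and $\mathrm{Num}_2(X_r)$ has rank $3$. Thus the target is, in turn, a planar cone with two extremal rays, another planar cone with two extremal rays, and a three-dimensional \emph{non-simplicial} cone with four extremal rays. For each generator $\gamma$ listed I would (i) exhibit an explicit effective cycle of class $\gamma$; (ii) show $\gamma$ spans an extremal ray; and (iii) show the listed generators already exhaust $\mathrm{Eff}^\bullet(X_r)$.

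For (i), every generator is represented by a coordinate subvariety of $E\cong\mathbb{P}^1\times\mathbb{P}^{r-2}$, a linear space through $C$, or a (proper transform of a) secant variety $\mathrm{Sec}_m(C)$ or a projective cone over one with vertex on $C$. The classes of the coordinate cycles on $E$ are read off directly. The classes of the secant-type cycles I would compute by pushing powers of $\zeta=\phi^*H$ forward along the resolution $\phi\colon\mathbb{P}(\mathcal{E}_{r,m})\to\mathrm{Sec}_m(C)$ of Definition~\ref{secantvariety}, using the projection formula for the cone constructions; for dimensions $1$ and $2$ this needs only the structure of $\mathbb{P}(\mathcal{E}_{r,2})$ recorded in Propositions~\ref{chowsec} and \ref{eff1PE}, and the remaining cases are analogous bundle computations. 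I would then transport the resulting classes on $\mathbb{P}^r$ to $X_r$ using Proposition~\ref{chowblowup}, the identification $-\xi=-h_2+(r+2)h_1$ of Proposition~\ref{chowRNC}, and Corollary~\ref{relation}.

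For (ii), I would treat each ray by a contraction or by orthogonality to a nef class. The ray $E$ is extremal because $E$ is rigid. The generators $j_*(h_1 h_2^{r-3})$ of $\mathrm{Eff}_1$ and $j_*(h_2^{r-3}),\, j_*(h_2^{r-4}h_1)$ of $\mathrm{Eff}_2$ are extremal because they are contracted by $\pi$ (or by the second projection of $E$), and the contracted classes span a face of readily computable dimension. For the remaining rays --- the secant hypersurface in $\mathrm{Eff}^1$, the proper transform of a $2$-secant line in $\mathrm{Eff}_1$, and the proper transforms of a $3$-secant plane and of the cone of $C$ over a point in $\mathrm{Eff}_2$ --- I would produce a semiample class (built from $H$, from the base-point-free system $|2H-E|$ of quadrics through the rational normal curve, or from pullbacks along the secant resolutions) that vanishes on that ray and is strictly positive on the rest of the cone.

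Step (iii) is the crux and the main obstacle. To bound $\mathrm{Eff}^1(X_r)$ from outside one intersects an arbitrary effective divisor $aH-bE$ against a covering family of rational curves meeting $C$ in a controlled number of points; the sharp slope $b/a$ is forced precisely by the family dual to the secant hypersurface, and constructing and analyzing this covering family is the delicate point for large even $r$. To bound $\mathrm{Eff}_1$ and $\mathrm{Eff}_2$ from outside one pairs an arbitrary effective cycle against enough classes that are nef in the sense of \cite{FL17a} --- for $\mathrm{Eff}_1$ the nef divisors $H$ and $2H-E$ suffice, while for $\mathrm{Eff}_2$ one needs a spanning set of nef classes in $\mathrm{Num}_{r-2}(X_r)$, obtained as monomials in $H$, $\pi$-pullbacks, and classes pulled back from $\mathbb{P}(\mathcal{E}_{r,2})$. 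Because positivity of codimension-two cycles is subtle and $\mathrm{Eff}_2(X_r)$ is not simplicial, the real work is assembling enough genuinely nef test classes and then checking, via the intersection numbers supplied by Propositions~\ref{chowRNC}, \ref{chowsec}, \ref{eff1PE} and Corollary~\ref{relation}, that the inequalities they impose cut out exactly the four-ray cone asserted. Matching the lower and upper bounds in each dimension then yields the three equalities.
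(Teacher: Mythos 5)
Your outline for parts (1) and (2) tracks the paper's argument closely: $E$ and $j_*(h_1h_2^{r-3})$ are extremal because they are contracted by $\pi$, the secant hypersurface (or the cone over one with vertex on $C$, in odd dimension) is handled by exhibiting a sweeping family of degree-$\lfloor r/2\rfloor$ curves through $\lfloor r/2\rfloor+1$ points of $C$ dual to it, and $\mathrm{Eff}_1$ is cut out by the nef divisors $H$ and $2H-E$, the latter nef because $I_C$ is generated by quadrics. Those two parts are fine as a plan, modulo the routine class computations ($[S_n(C)]=(n+1)H-nE$ is obtained in the paper by an induction on projections from secant lines).

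The genuine gap is in part (3), and it is exactly at the point you flag as ``the crux.'' Your proposed mechanism for the outer bound on $\mathrm{Eff}_2$ --- assembling a spanning set of \emph{nef} test classes in the complementary dimension --- does not go through as stated, because the natural candidate dual to the relevant facet, $H^3-(n-2)j_*(h_1h_2)$, is \emph{not} nef: it pairs negatively with the proper transform $S_2$ of the secant variety (Remark \ref{notnefn-2}). The paper's actual argument replaces this failed nef class by a two-case analysis. For an irreducible surface $S\not\subset S_2$ one intersects $\gamma(S)$ with the divisor class $3H-2E$, which is not nef but whose base locus is shown to lie inside $S_2$ (by moving cones of $\mathrm{Sec}_2$ over $(n-4)$-secant linear spaces); the product is then an effective $1$-cycle and the already-known $\mathrm{Eff}_1$ yields the inequality $2b+(3n-8)a\le 6$, which together with the one genuinely nef class $H^2-(n-1)j_*(h_1)$ (Lemma \ref{nefn-1}, itself requiring a nontrivial incidence-correspondence and tangent-space transversality argument) cuts out a cone $B$ contained in the asserted cone $A$. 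For $S\subset S_2$ one instead uses the identification $S_2\cong\mathbb{P}(\mathcal{E}_{n,2})$ (Lemma \ref{S2}), the computation $\mathrm{Eff}^1(\mathbb{P}(\mathcal{E}_{n,2}))=\langle h,\,2\zeta-(n-2)h\rangle$ (Proposition \ref{eff1PE}), and the pushforward formulas of Lemma \ref{push} to see that $\psi_*\mathrm{Eff}_2(S_2)\subset A$. Without the base-locus observation and the separate treatment of surfaces inside $S_2$, the inequalities available from honest nef classes do not close up to the four-ray cone, so your step (iii) for $\mathrm{Eff}_2$ is missing the key idea rather than merely deferring a computation.
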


\subsection{Effective divisors}

In this subsection we compute the cone of effective divisors of $X_{r}$, which is Theorem \ref{effRNC} (1). Since $\mbox{Pic}(X_{r})=\mathrm{Num}^{1}(X_{r})= \mathbb{Z}H\oplus \mathbb{Z}E$, it suffices to find the two extremal rays. 

For every $m\geq 1$, let $S_{m}=S_{m}(C)$ be the proper transform of $\mathrm{Sec}_{m}(C)$ (Definition \ref{secantvariety}). When $r = 2n$ is even, we have $\dim(\mathrm{Sec}_n(C)) = 2n-1$. In this case, the cone of effective divisors is generated by $E$ and $S_{n}(C)$. 

\begin{proposition}\label{evendivisor}
For every $n\geq 1$, $\mathrm{Eff}^{1}(X_{2n})=\langle E, [S_{n}(C)] \rangle $.
\end{proposition}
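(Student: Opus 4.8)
The plan is to show that $\mathrm{Eff}^1(X_{2n})$ is spanned by exactly the two rays $E$ and $[S_n(C)]$. Since $\mathrm{Pic}(X_{2n})$ has rank $2$, it suffices to (i) exhibit effective divisors representing these two classes, and (ii) prove each ray is extremal, which for a rank-$2$ cone means producing, for each of the two classes, a curve class (or a nef-type functional) against which that class pairs to zero while the other class pairs strictly positively. The first point is immediate for $E$; for $[S_n(C)]$ I would use Definition \ref{secantvariety} to realize $\mathrm{Sec}_n(C) = \phi(\mathbb{P}(\mathcal{E}_{2n,n}))$ as a genuine subvariety of $\mathbb{P}^{2n}$ of dimension $2n-1$ (here $k = n < r = 2n$, so the setup applies), and take $S_n(C)$ to be its proper transform in $X_{2n}$.

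The more substantial step is identifying the numerical class of $[S_n(C)]$, i.e.\ writing $[S_n(C)] = aH - bE$ for the correct integers $a = \lfloor r/2\rfloor + 1 = n+1$ and $b = \lfloor r/2 \rfloor = n$. The degree coefficient $a$ is the degree of $\mathrm{Sec}_n(C)$ in $\mathbb{P}^{2n}$, which is classically $\binom{2n - n + 1}{n}\cdot(\text{something})$ — more precisely I would compute $\deg \mathrm{Sec}_n(C)$ either by pushing forward $\zeta^{2n-1}$ along $\phi$ using the Chern class computation in the proof of Proposition \ref{chowsec} (generalized to arbitrary $k$, which the excerpt sets up via $c(\mathcal{E}_{n,k}) = (1+h)^{-(n-k+1)}$), or by invoking the known formula for secant varieties of rational normal curves. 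For the multiplicity $b$ along $C$: since $E = \mathbb{P}(N_{C/\mathbb{P}^r})$ and $\pi^*\mathrm{Sec}_n(C) = S_n(C) + bE$, the integer $b$ is the multiplicity of $\mathrm{Sec}_n(C)$ along its subvariety $C$; a point of $C$ lies on the $\binom{n-1+1}{n}$-dimensional family... rather, $b$ equals the multiplicity of a general point of $C$ on $\mathrm{Sec}_n(C)$, computable from the terminal secant variety structure. Alternatively, and more robustly, I would pin down $b$ by an intersection-theoretic normalization: intersect $aH - bE$ with a well-chosen curve class in $X_{2n}$ whose image and incidence with $C$ are understood (e.g.\ the proper transform of a line meeting $C$ in one point, or a fiber of $E \to C$), and solve for $b$ using the multiplication rules of Proposition \ref{chowRNC}.

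For extremality: the class $E$ is contracted by $\pi$, so it pairs to zero with the class of any curve mapped to a point by $\pi$, namely a curve inside a fiber of $E \to C$; meanwhile $(n+1)H - nE$ pairs positively with such a curve since $H$ is trivial on it but $-E$ restricts to (a positive multiple of) an ample-type class on the fiber $\mathbb{P}^{r-2}$ — concretely $-E|_E = \xi = h_2 - (r+2)h_1$ by Proposition \ref{chowRNC}, so I would pick the curve class $h_1^{?}$... I would instead pair against the class of a line in a $\mathbb{P}^{r-2}$-fiber, against which $-E$ is positive. For the other ray, $[S_n(C)] = (n+1)H - nE$: the variety $\mathrm{Sec}_n(C)$ is (birationally) the image of the projective bundle $\mathbb{P}(\mathcal{E}_{2n,n}) \to C^{[n]} = \mathbb{P}^n$ under $\phi$, and $S_n(C)$ carries curves contracted by this resolution — the proper transforms of the $D$-curves, i.e.\ curves lying over a single point of $C^{[n]}$ won't do, rather curves contracted by $\phi$ meeting... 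The cleanest argument: $[S_n(C)]$ is effective and lies on the boundary because there is an effective curve class $\gamma$ (the proper transform of a secant $\mathbb{P}^{n-1}$-ruling line of $\mathrm{Sec}_n(C)$, or of the curve $D$) with $[S_n(C)]\cdot \gamma \le 0$ while $E \cdot \gamma > 0$; I expect the main obstacle to be choosing this curve correctly and verifying the sign of the intersection number via the Chow ring relations, since the naive ruling curves of $\mathrm{Sec}_n(C)$ pass through $C$ and their proper transforms interact with $E$ in a way that requires care. I would handle this by working on the resolution $\mathbb{P}(\mathcal{E}_{2n,n})$, where the relevant cone of curves is transparent, and pushing forward to $X_{2n}$ via the induced map (lifting $\phi$ to a map $\mathbb{P}(\mathcal{E}_{2n,n}) \dashrightarrow X_{2n}$ on proper transforms, as the excerpt's emphasis on "resolutions of secant varieties" signals is the intended route).
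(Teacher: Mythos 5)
Your overall framing (rank-two Picard group, exhibit the two effective classes, pin each ray down by a dual functional) matches the paper's, and your sketch for computing $[S_n(C)]=(n+1)H-nE$ points toward what the paper actually does separately in Proposition \ref{evensec} (degree $n+1$ from the classical formula, multiplicity $n$ along $C$ by induction on projections from secant lines). But your extremality argument for $[S_n(C)]$ has a genuine gap. You propose to certify that $[S_n(C)]$ lies on the boundary by producing an \emph{effective} curve class $\gamma$ (a ruling curve of $\mathrm{Sec}_n(C)$, or a curve contracted by the resolution $\phi$) with $[S_n(C)]\cdot\gamma\le 0$ and $E\cdot\gamma>0$. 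That criterion is not valid: a curve contained in $S_n(C)$ meeting it nonpositively says nothing about \emph{other} effective divisors, and non-extremal (even big) classes routinely meet curves in their own support negatively. What is needed is a curve class that pairs nonnegatively with \emph{every} effective divisor and vanishes on $[S_n(C)]$; only such a class forces the inequality $na-(n+1)b\ge 0$ for every irreducible divisor of class $aH-bE$ other than $E$, which is what locates the second extremal ray at $(n+1)H-nE$. The paper constructs exactly this: the class of the proper transform of a degree-$n$ curve meeting $C$ in $n+1$ points, shown to be nef because such curves sweep out all of $\mathbb{P}^{2n}$ (take $\Sigma$ a union of $\lceil (n+1)/2\rceil$ secant lines through the $n+1$ points and append a free curve $Z$ of the residual degree $n-\lceil (n+1)/2\rceil>0$). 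Your proposal contains no covering family dual to $[S_n(C)]$, and the curves you do name are confined to $\mathrm{Sec}_n(C)$, so they cannot play this role.

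Two smaller points. Your functional for the extremality of $E$ is off: a line in a fiber $\{pt\}\times\mathbb{P}^{2n-2}$ of $E$ pairs to $-1$ with $E$, not $0$ (since $\mathcal{O}_{X_{2n}}(E)$ restricts to $\mathcal{O}(-1)$ on it), so it is not a nef functional vanishing on $E$; use instead $H^{2n-1}$, the class of a general line avoiding $C$, or push forward a purported decomposition of $E$ under $\pi$. And the degree-plus-multiplicity computation for $\mathrm{Sec}_n(C)$ is a nontrivial ingredient rather than something to be quoted in passing; as written, your normalization step ("intersect with a well-chosen curve class") still presupposes knowing how often that curve meets $\mathrm{Sec}_n(C)$ along $C$, which is the same multiplicity you are trying to determine.
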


\begin{proof}
  Since $E$ is contracted by $\pi$, it spans an extremal ray.
To show $[\mathrm{Sec}_{2}(C)]$ spans an extremal ray, it suffices to show its dual class $nH^{2n}-(n+1)j_{*} (h_{1} h_{2}^{2n-2})$ is nef. This is done if we show the curves of degree $n$ that pass through $(n+1)$ points on $C$ sweep out the whole projective space $\mathbb{P}^{2n}$.
  When $n=1$, we take the degree $1$ curve to be any secant line of $C$. Since $\mathrm{Sec}_{2}(C)=\mathbb{P}^{2}$, $H^{2}-2j_{*}(h_{1})$ is nef. 
  When $n\geq 2$, let $\Sigma$ be a union of $\lceil (n+1)/2 \rceil$ lines containing the $(n+1)$ points. Then $d:=n-\lceil (n+1)/2 \rceil>0$. We may take the degree $n$ curve to be $\Sigma\cup Z$ for any curve $Z$ of degree $d$, then $\mathbb{P}^{2n}$ is swept out by $Z$.
\end{proof}

In order to write $\mbox{Eff}^{1}(X_{2n})$ in terms of the presentation of Proposition \ref{chowRNC}, we need to compute the class of $S_{n}(C)$ in $\mbox{Pic}(X_{2n})$.

\begin{proposition}\label{evensec}
Let $C\subset \mathbb{P}^{2n}$ be a rational normal curve. Then $[S_n(C)] = (n+1)H - nE $. 
\end{proposition}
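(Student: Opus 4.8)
The plan is to compute the class $[S_n(C)] = aH - bE$ in $\mathrm{Pic}(X_{2n})$ by pinning down the two integers $a$ and $b$ via intersection-theoretic constraints. First I would determine $a$: the image $\mathrm{Sec}_n(C) \subseteq \mathbb{P}^{2n}$ is a hypersurface, so $\pi_* [S_n(C)] = a H$ where $a$ is the degree of $\mathrm{Sec}_n(C)$. The degree of the secant variety of $n$-secant $(n-1)$-planes to a rational normal curve of degree $2n$ is a classical quantity; I would extract it from the resolution $\phi : \mathbb{P}(\mathcal{E}_{2n,n}) \to \mathrm{Sec}_n(C)$, which is birational onto its image (since a general point of $\mathrm{Sec}_n(C)$ lies on a unique secant $(n-1)$-plane), by pushing forward $\zeta^{2n-1}$ and using the Chern class computation $c(\mathcal{E}_{2n,n}) = (1+h)^{-(n+1)}$ from the proof of Proposition \ref{chowsec}, together with the projective bundle relation. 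This gives $\deg \mathrm{Sec}_n(C) = a$, which should come out to $n+1$.

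Next I would determine the coefficient $b$ of $E$, i.e. the multiplicity of $\mathrm{Sec}_n(C)$ along $C$. Since $\pi^* \mathrm{Sec}_n(C) = S_n(C) + bE$ as divisors, $b = \mathrm{mult}_C \mathrm{Sec}_n(C)$. One clean way: restrict to a general plane section or, better, compute using the exceptional divisor directly. The proper transform $S_n(C)$ meets $E \cong \mathbb{P}^1 \times \mathbb{P}^{2n-2}$ in a divisor whose class can be read off from the geometry of the resolution — namely $S_n(C) \cap E$ should be related to the locus $D$ (or rather its analogue for $k=n$) inside the secant bundle, and the multiplicity $b$ is detected by how $\zeta = \phi^* H$ pulls back near the exceptional locus. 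Alternatively, and perhaps most efficiently, I would use a dimension count / incidence argument: a general point $p \in C$ lies on secant $(n-1)$-planes corresponding to $n$-tuples through $p$, a family of dimension $n-1$, and counting how many of these planes pass through a general point of a small ball around $p$ recovers $\mathrm{mult}_p$; this should yield $b = n$.

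Having $a = n+1$ and $b = n$, I would close the argument by noting $[S_n(C)] = (n+1)H - nE$. As a consistency check (and a way to actually prove one of the two values rather than cite it), I would verify the pair against an independent intersection number: for instance, $[S_n(C)] \cdot \ell$ where $\ell$ is the class of the proper transform of a secant line must equal a computable nonnegative integer, and $[S_n(C)] \cdot (\text{line in a fiber of } E)$ must equal $-b \cdot (E|_E \cdot \text{fiber line}) $, which by Proposition \ref{chowRNC} (with $-\xi = -h_2 + (r+2)h_1$) is determined. Two such equations over-determine $(a,b)$ and confirm the answer.

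The main obstacle I expect is computing the multiplicity $b = \mathrm{mult}_C \mathrm{Sec}_n(C)$ cleanly. The degree $a$ is a routine Segre-class push-forward once the resolution's birationality is noted, but the multiplicity requires either a delicate local analysis of the secant variety along the curve, or a careful bookkeeping of the proper transform's class inside the blow up using Proposition \ref{chowblowup} — in particular understanding precisely how $\overline{\phi}^{-1}(C)$ sits inside $\mathbb{P}(\mathcal{E}_{2n,n})$ and relating the blow up $X_{2n}$ to (a modification of) this secant bundle. I anticipate that the secant bundle $\mathbb{P}(\mathcal{E}_{2n,n}) \to \mathrm{Sec}_n(C)$ is in fact the normalization of $\mathrm{Sec}_n(C)$ and that comparing it to $X_{2n}$ along the exceptional loci is where the real content lies; getting the multiplicity to come out to exactly $n$ (and not, say, via an off-by-one in the Hilbert scheme parametrization) is the step that needs the most care.
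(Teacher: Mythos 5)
Your computation of the coefficient of $H$ is fine: identifying $a$ with $\deg\mathrm{Sec}_n(C)$ and extracting it from the secant bundle resolution via the Chern class formula is a legitimate (slightly more laborious) alternative to the paper, which simply cites the classical value $\deg\mathrm{Sec}_n(C)=n+1$ from \cite{EH16}. The problem is the coefficient of $E$, which is the actual content of the proposition, and there your proposal stops at a list of candidate strategies rather than an argument. The ``dimension count'' you sketch does not compute a multiplicity: knowing that the secant $(n-1)$-planes through a general point $p\in C$ form an $(n-1)$-dimensional family (namely $C^{(n-1)}$) gives no direct access to $\mathrm{mult}_p\mathrm{Sec}_n(C)$, which is a local degree of the tangent cone, not a dimension. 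Likewise your proposed ``consistency check'' against a line in a fiber of $E$ over $C$ is circular: $(aH-bE)\cdot(\text{fiber line})=b$ by Proposition \ref{chowRNC}, but evaluating $[S_n(C)]$ on that fiber line independently amounts to knowing the degree of the tangent cone of $\mathrm{Sec}_n(C)$ at a general point of $C$ --- i.e.\ the multiplicity you are trying to find. So the second equation does not over-determine $(a,b)$; it restates the unknown.

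What the paper actually does to get $b=n$ is the concrete device your sketch is missing. Take a general line $L$ meeting $C$ at a general point $p$; since $\deg\mathrm{Sec}_n(C)=n+1$, the multiplicity at $p$ is $n+1$ minus the number of residual intersection points of $L$ with $\mathrm{Sec}_n(C)$ away from $C$. Those residual points are in bijection with $n$-secant $(n-2)$-planes of the projection of $C$ from $L$ into $\mathbb{P}^{2n-2}$, and the paper shows there is exactly one of them: for the base case $n=2$ this is the count of nodes of a plane rational cubic (one node), and for the inductive step one degenerates the projected degree-$(2n-1)$ curve to $C'\cup L'$ with $C'$ a rational normal curve in $\mathbb{P}^{2n-2}$, reducing the count to $\#\bigl[(\mathrm{Sec}_{n-1}(C')\cap L')\setminus C'\bigr]$, which is $1$ by the induction hypothesis. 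If you want to avoid the projection/degeneration argument entirely, a clean alternative you did not mention is the determinantal description: $\mathrm{Sec}_n(C)$ is the vanishing of an $(n+1)\times(n+1)$ catalecticant determinant whose rank stratification is exactly the chain of secant varieties, so along $C$ (the rank-one locus) the determinant vanishes to order $n$. Either way, some such mechanism must be supplied; as written the multiplicity claim is asserted, not proved.
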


\begin{proof}
By \cite{EH16} Theorem 10.16, $\deg(\text{Sec}_{n}(C)) = n+1$ for all $n$. The coefficient of $E$ is the generic multiplicity of $Sec_{n}(C)$ along $C$. We compute this by induction on $n$.

First consider $n=2$. Pick a general line $L_{1}$ that meets $C$ in a general point $p$, and let $\pi_{1}:\mathbb{P}^{4}\dashrightarrow \mathbb{P}^{2}$ be the projection from $L_{1}$. Then $\pi_{1}(C)$ is a rational curve of degree 3, which has 1 node. The node corresponds to $(\mathrm{Sec}_{2}(C)\cap L_{1})\setminus C$. Since $\mathrm{Sec}_{2}(C)\cdot L_{1}$ has length 3, the intersection multiplicity $\mathrm{Sec}_{2}(C)\cdot L_{1}$ at $p$ is 2. Hence $[\mathrm{Sec}_{2}(C)]=3H-2E$. 

Suppose the lemma is proved for $X_{2(n-1)}$, we now prove it for $X_{2n}$. Pick a general line $L_{2}$ that meets $C$ in a general point and let $\pi_{2}:\mathbb{P}^{2n} \dashrightarrow \mathbb{P}^{2n-2}$ be the projection from $L_{2}$. The points in $(L_{2}\cap \mathrm{Sec}_{n}(C))\setminus C$ correspond to $n$-secant $\mathbb{P}^{n-2}$ of $\pi_{3}(C)\subset \mathbb{P}^{2n-2}$, denote the number of such points by $\delta$. Then $[S_{n}(C)]=(n+1)H-(n+1-\delta)E$, $\delta\geq 1$. Note that $\pi_{3}(C)$ is a rational curve of degree $2n$, it specializes to $C'\cup L$, where $C'$ is a rational normal curve in $\mathbb{P}^{2n-2}$ and $L$ is a general line intersecting $C'$ at a point. Denote the number of $n$-secant $\mathbb{P}^{n-2}$ of $C'\cup L$ by $\delta'$, then $\delta\leq \delta'$. Note that $\delta'=\#[(\mathrm{Sec}_{n-1}(C')\cap L)\setminus C']$, by induction hypothesis $\delta'=1$. Hence $\delta=1$. 

\end{proof}

In the following proposition we compute $\mbox{Eff}^{1}(X_{r})$ when $r$ is odd. The technique is similar to that of Proposition \ref{evendivisor}. 

\begin{proposition}\label{odddivisor}
  For every $n\geq 1$, $\mathrm{Eff}^{1}(X_{2n+1})=\langle E, (n+1)H-nE \rangle.$
\end{proposition}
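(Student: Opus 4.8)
The plan is to follow the template of Proposition~\ref{evendivisor}: bound $\mathrm{Eff}^{1}(X_{2n+1})$ between two half-spaces coming from covering families of curves, and then exhibit an effective divisor on the boundary ray $\mathbb{R}_{\geq 0}\big((n+1)H-nE\big)$. Since $\mathrm{Pic}(X_{2n+1})=\mathbb{Z}H\oplus\mathbb{Z}E$ has rank $2$, two boundary hyperplanes determine the cone, and it suffices to (i) exhibit a divisor of class $(n+1)H-nE$, and (ii) bound $\mathrm{Eff}^1(X_{2n+1})$ inside the cone spanned by $E$ and $(n+1)H-nE$.

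First I would produce the effective divisor. Fix a general point $c\in C$ and let $S$ be the proper transform of the cone $\mathrm{Cone}_{c}(\mathrm{Sec}_{n}(C))\subseteq\mathbb{P}^{2n+1}$, which is a divisor since $\dim\mathrm{Sec}_{n}(C)=2n-1$. I claim $[S]=(n+1)H-nE$. Projection from $c$ gives a rational map $\pi_{c}\colon\mathbb{P}^{2n+1}\dashrightarrow\mathbb{P}^{2n}$ carrying $C$ to a rational normal curve $C'$ of degree $2n$ and $\mathrm{Sec}_{n}(C)$ onto a dense subset of $\mathrm{Sec}_{n}(C')$, so that $\mathrm{Cone}_{c}(\mathrm{Sec}_{n}(C))$ is the closure of $\pi_{c}^{-1}(\mathrm{Sec}_{n}(C'))$. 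By Proposition~\ref{evensec}, $\mathrm{Sec}_{n}(C')\subseteq\mathbb{P}^{2n}$ is a hypersurface of degree $n+1$ with multiplicity $n$ along $C'$. Intersecting $S$ with a general line of $\mathbb{P}^{2n+1}$ (whose image under $\pi_{c}$ is again a line) gives $\deg S=n+1$; and since $\pi_{c}$ is a smooth morphism near a general point $p\in C$, with $\pi_{c}(p)$ general on $C'$, the multiplicity of $S$ along $C$ at $p$ equals that of $\mathrm{Sec}_{n}(C')$ along $C'$ at $\pi_{c}(p)$, namely $n$. As in the proof of Proposition~\ref{evensec}, the degree and the generic multiplicity along $C$ force $[S]=(n+1)H-nE$. (Alternatively, I would run the same inductive projection-from-a-line argument used in Proposition~\ref{evensec}.)

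Next I would bound the cone. Intersecting with the class $H^{2n}$ of a general line shows every effective divisor has nonnegative $H$-coefficient. For the second bound, exactly as in Proposition~\ref{evendivisor}, let $\gamma$ be the class of the proper transform of a degree-$n$ curve through $n+1$ general points of $C$ meeting $C$ transversally; then $H\cdot\gamma=n$ and $E\cdot\gamma=n+1$, so $\big((n+1)H-nE\big)\cdot\gamma=0$. Such curves cover $\mathbb{P}^{2n+1}$: for $n=1$ they are the secant lines, which sweep out $\mathrm{Sec}_{2}(C)=\mathbb{P}^{3}$; for $n\geq 2$ one takes the union of $\lfloor (n+1)/2\rfloor$ secant lines through disjoint pairs of the chosen points together with the remaining $\lceil (n-1)/2\rceil\ (\geq 1)$ lines, one passing through the leftover point when $n+1$ is odd and the rest completely free, so that as these additional lines vary all of $\mathbb{P}^{2n+1}$ is swept out. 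Hence $\gamma$ is represented by a covering family, so $\gamma\cdot D\geq 0$ for every effective divisor $D$. The two inequalities $\{H\text{-coefficient}\geq 0\}$ and $\{\gamma\cdot(-)\geq 0\}$ cut out precisely the cone $\langle E,\ (n+1)H-nE\rangle$, and both generators are effective, so $\mathrm{Eff}^{1}(X_{2n+1})=\langle E,\ (n+1)H-nE\rangle$.

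The main obstacle is the multiplicity computation in the second paragraph: one must be sure that the multiplicity $n$ of $\mathrm{Sec}_{n}(C')$ along $C'$ (supplied by Proposition~\ref{evensec}) really transfers to $S$ along $C$. I expect this to be the only genuinely new point; it is handled either by invariance of multiplicity under the smooth morphism $\pi_{c}$ away from $c$, or by repeating the inductive argument of Proposition~\ref{evensec} in the odd case. The extremality (covering-curve) part is essentially identical to the even case already carried out.
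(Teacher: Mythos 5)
Your proof is correct and follows essentially the same route as the paper: the extremal effective divisor is the cone with vertex at a point of $C$ over the secant variety of the projected rational normal curve in $\mathbb{P}^{2n}$ (your $\mathrm{Cone}_{c}(\mathrm{Sec}_{n}(C))$ is exactly this cone), its class $(n+1)H-nE$ is deduced from Proposition~\ref{evensec}, and extremality is established by the same covering family of degree-$n$ curves through $n+1$ points of $C$ used in Proposition~\ref{evendivisor}. You simply spell out the two steps the paper delegates to ``a similar argument'' (the transfer of multiplicity under the projection and the sweeping-out of $\mathbb{P}^{2n+1}$, including the small-$n$ cases), which is a welcome level of detail but not a different method.
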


\begin{proof}
Let $x \in C \subset \mathbb{P}^{2n+1}$ be a point, $\pi_x: C \dashrightarrow P$ be the projection from $x$ to a complementary hyperplane $P$, and $Y$ be the cone of $\text{Sec}_{n-1}(\pi_x(C))\subset P$ with vertex $x$. Note that $\pi_x(C)$ is a rational normal curve in $P \cong \mathbb{P}^{2n}$ and $Y$ is a cone over $\mathrm{Sec}_{n-1}(\pi_{x}(C))\subset \mathbb{P}^{2n}$, by Lemma \ref{evensec} the degree of $Y$ is $(n+1)$ and the generic multiplicity of $Y$ along $C$ is $n$. Hence $[Y]=(n+1)H-nE$. Since $E$ is contracted by $\pi$, it spans an extremal ray. By a similar argument as in the proof of Proposition \ref{evendivisor}, we see that $[Y]$ spans another extremal ray.
\end{proof}

\subsection{One dimensional effective cycles}

The cone of one dimensional effective cycles of $\mathrm{BL}_{C}\mathbb{P}^{r}$ is given by the following proposition, which is Theorem \ref{effRNC} (2). 

\begin{proposition}\label{dim1}
  For every $r\geq 2$, $\mathrm{Eff}_{1} (\mathrm{Bl}_{C}\mathbb{P}^{r})=\langle j_{*} (h_{1} h_{2}^{r-3}), H^{2n}-2 j_{*} (h_{1} h_{2}^{r-3}) \rangle$.
\end{proposition}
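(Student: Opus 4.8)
The plan is to exhibit two generators and show each spans an extremal ray, then verify that together they span the whole cone. The two candidate classes are $\alpha := j_*(h_1 h_2^{r-3})$, the class of a line in the $\mathbb{P}^{r-2}$-fiber direction of $E \cong \mathbb{P}^1 \times \mathbb{P}^{r-2}$, and $\beta := H^2 - 2\alpha$ (I will write $H^2$ for what the statement calls $H^{2n}$; this is a typo for $H^{2} = H^r\cdot H^{2-r}$... more precisely the curve class $H^{r-1}$ intersected appropriately — in any case the degree-one class dual to $H$, which is the proper transform of a $2$-secant line of $C$). First I would record why $\beta$ really is the class of the proper transform of a secant line: a $2$-secant line $\ell$ of $C$ meets $C$ in two points, so its proper transform $\widetilde{\ell}$ satisfies $\pi_*\widetilde{\ell} = \ell$, giving $H$-degree $1$, and $\widetilde{\ell}\cdot E = 2$; using the presentation in Proposition \ref{chowRNC} and Corollary \ref{relation} one solves for the coefficient and gets $\widetilde{\ell} = H^2 - 2\alpha$ (here a small intersection-number computation using $H\cdot j_*(h_2^{r-3}) = 0$, $-\xi = (r+2)h_1 - h_2$, and the relations is required).

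Next I would show $\alpha$ is extremal. The class $h_1 h_2^{r-3}$ is a fiber line of the ruling $E = \mathbb{P}^1\times\mathbb{P}^{r-2} \to \mathbb{P}^1$; this line is contracted to a point by $\pi$, so $\alpha$ lies on the boundary of the pseudoeffective cone, and in fact it generates an extremal ray because any effective curve numerically proportional to $\alpha$ has $H$-degree zero, hence is contracted by $\pi$ into $E$, hence is supported on $E$, and inside $E$ the cone of effective curves is $\langle h_1 h_2^{r-3},\ h_2^{r-2}\rangle$ — but by Corollary \ref{relation} the class $h_2^{r-2}$ is \emph{not} proportional to $\alpha$ in $\mathrm{Num}_1(X_r)$ (it involves $H^{r-1}$), so proportionality to $\alpha$ forces the curve to be a multiple of $h_1 h_2^{r-3}$. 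Thus $\alpha$ spans an extremal ray. For extremality of $\beta$, the cleanest route is duality: $\mathrm{Eff}_1(X_r)$ and $\mathrm{Nef}^1(X_r) = \overline{\mathrm{Eff}^1(X_r)}^\vee$ are dual in the rank-$2$ vector space $\mathrm{Num}_1(X_r)\cong\mathbb{R}^2$ (after imposing the numerical relation of Corollary \ref{relation}), so the two extremal rays of $\mathrm{Eff}_1$ are exactly the classes annihilated by the two extremal nef divisors. One extremal nef class is $H$ (pullback of a hyperplane, which is base-point free), and $H\cdot(H^2 - 2\alpha) = H^3 - 2H\cdot\alpha$; since $\alpha = j_*(h_1h_2^{r-3})$ and $H\cdot j_*(h_1 h_2^{r-3}) = j_*((r+1)h_1\cdot h_1 h_2^{r-3}) = 0$ because $h_1^2 = 0$, we get $H\cdot\beta = 0$ (interpreting degrees in the top dimension correctly). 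Hence $\beta$ lies on the nef-dual face cut out by $H$, so it spans an extremal ray; and dually $\alpha$ is killed by the \emph{other} extremal nef divisor, namely the nef generator dual to $[S_{\lfloor r/2\rfloor}(C)]$ from Theorem \ref{effRNC}(1) — consistent with the divisor computation.

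Finally, to see these two rays span everything, note $\mathrm{Num}_1(X_r)$ is two-dimensional (Proposition \ref{chowRNC} together with Corollary \ref{relation}), so $\mathrm{Eff}_1(X_r)$ is a two-dimensional closed convex cone, hence polyhedral with exactly two extremal rays; having identified two non-proportional extremal classes $\alpha,\beta$ that both bound the cone from the same side (i.e. the cone lies between them), we conclude $\mathrm{Eff}_1(X_r) = \langle\alpha,\beta\rangle$. Concretely, any irreducible curve $Z$ either lies in $E$ — and then its class is a nonnegative combination of $h_1h_2^{r-3}$ and $h_2^{r-2}$, the latter being $rH^{r-1} - (r+2)j_*(h_2^{r-3}h_1) = r\beta + (\text{positive})\alpha$ after rewriting via Corollary \ref{relation}, so $[Z]\in\langle\alpha,\beta\rangle$ — or $Z$ dominates its image in $\mathbb{P}^r$ under $\pi$ with $H$-degree $d\geq 1$, and then one shows $[Z] = d\beta + m\alpha$ with $m\geq 0$ by a multiplicity-of-secancy estimate: $m$ measures how much $E\cdot Z$ exceeds the minimal $2d$ forced by degree, equivalently this reduces to the classical fact that a degree-$d$ curve in $\mathbb{P}^r$ with $C$ a rational normal curve can be at most $2d$-secant (it spans at most a $\mathbb{P}^{2d}$... more carefully, the relevant bound is that the curve meets $C$ in at most... ) — and nonnegativity of $m$ then follows. \textbf{The main obstacle} I anticipate is precisely this last nonnegativity step: controlling the "secant defect" coefficient $m$ for an arbitrary irreducible curve $Z$ (not just a secant line), i.e. proving that an irreducible curve of $H$-degree $d$ cannot meet $C$ — with multiplicity — at more than $2d$ points, so that $E\cdot Z \leq 2d$ and hence $[Z]$ has nonnegative $\alpha$-coordinate; equivalently, testing $[Z]$ against the nef class dual to $\alpha$ and showing it is $\geq 0$. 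I expect the authors handle this by an intersection-theoretic pairing with the nef divisor $[S_{\lfloor r/2\rfloor}(C)]$ established in part (1), or by a direct degeneration/projection argument in the spirit of Proposition \ref{evensec}.
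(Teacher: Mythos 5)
Your overall strategy --- two extremal rays in the rank-two space $\mathrm{Num}_1(X_r)$, each certified by a nef divisor vanishing on it --- is essentially the paper's, but the execution contains a concrete error that leaves the extremality of $\beta = H^{r-1}-2j_{*}(h_1h_2^{r-3})$ unproved. You claim $H\cdot\beta=0$; this is false. Since $\beta$ is the class of the proper transform of a $2$-secant line, $\pi_{*}\beta$ is a line and $H\cdot\beta = H\cdot H^{r-1} = H^{r} = 1$ (the point class has degree one, not zero). It is $\alpha = j_{*}(h_1h_2^{r-3})$, the $\pi$-contracted class, that satisfies $H\cdot\alpha=0$; you have the pairing exactly backwards (one checks likewise that $(2H-E)\cdot\alpha=1\neq 0$, so $\alpha$ is not killed by the other nef generator either). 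The nef divisor that annihilates $\beta$ is $2H-E$ (a secant line has $H$-degree $1$ and $E$-degree $2$), and its nefness is the one substantive input of the paper's proof: the ideal sheaf of a rational normal curve is generated by quadrics, so the linear system $|2H-E|$ on $X_r$ is base point free, hence nef.

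This same fact disposes of what you flag as the ``main obstacle.'' The nonnegativity of the $\alpha$-coordinate of an arbitrary irreducible curve $Z\not\subset E$ of $H$-degree $d$, i.e. the bound $E\cdot Z\le 2d$, is precisely the inequality $(2H-E)\cdot Z\ge 0$, so it follows at once from the nefness of $2H-E$ and needs no separate secancy bound, degeneration, or pairing with the secant-variety divisor. Once you have the two nef classes $H$ and $2H-E$ with $H\cdot\alpha=0$ and $(2H-E)\cdot\beta=0$, together with the effectivity of $\alpha$ and $\beta$ and the two-dimensionality of $\mathrm{Num}_1(X_r)$, the proof closes exactly as you describe. (Your extremality argument for $\alpha$ is fine, and reading the statement's $H^{2n}$ as $H^{r-1}$ is the correct interpretation.)
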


\begin{proof}
The class $j_{*} (h_{1} h_{2}^{r-3})$ is contracted by $\pi$, hence it spans an extremal ray. The dual class of $H^{2n}-2 j_{*} (h_{1} h_{2}^{r-3})$ is $2H-E$. Since the ideal sheaf of $C$ is generated by quadrics, $2H-E$ is nef. Hence $H^{2n}-2j_{*}(h_{1}h_{2}^{r-3})$ spans another extremal ray. 
\end{proof}

\subsection{Dimension Two Effective Cycles}
In this section we compute the cones of two-dimensional cycles on blowups of rational normal curves in $\mathbb{P}^r$, which is Theorem \ref{effRNC} (3). Let $ \mathcal{E}=\mathcal{E}_{n,2}$ be the secant bundle of $C\subset \mathbb{P}^{n}$. We have the following lemma.

\begin{lemma}\label{S2}
We have $S_{2}\cong \mathbb{P}(\mathcal{E})$.
\end{lemma}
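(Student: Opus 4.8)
The plan is to show that $\phi\colon \mathbb{P}(\mathcal{E})\to\mathbb{P}^r$ lifts through the blow-down $\pi\colon X_r\to\mathbb{P}^r$ to a morphism $\psi\colon \mathbb{P}(\mathcal{E})\to X_r$ which is a closed immersion with image $S_2$. For the factorization, recall from Remark \ref{D} that the square there is Cartesian, so the scheme-theoretic preimage $\phi^{-1}(C)$ equals the divisor $D$; hence $\phi^{-1}(\mathcal{I}_C)\cdot\mathcal{O}_{\mathbb{P}(\mathcal{E})}=\mathcal{I}_D$, and this is an invertible ideal sheaf because $\mathbb{P}(\mathcal{E})$ is smooth and $D$ is a hypersurface (it is length $2$ on each fiber of $\mathbb{P}(\mathcal{E})\to C^{[2]}$, hence finite flat of degree $2$ over $C^{[2]}$, hence Cohen--Macaulay of pure codimension $1$). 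By the universal property of the blow-up $\pi$, the map $\phi$ factors as $\phi=\pi\circ\psi$ with $\psi^{*}E=D$. The image $\psi(\mathbb{P}(\mathcal{E}))$ is then closed, irreducible, of dimension $3=\dim\mathrm{Sec}_2(C)$, maps onto $\mathrm{Sec}_2(C)$ under $\pi$, and contains $\pi^{-1}(\mathrm{Sec}_2(C)\smallsetminus C)$; so it is exactly the proper transform $S_2$.

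Next I would record the easy parts: since a rational normal curve in $\mathbb{P}^r$ ($r\ge 3$) has no trisecant lines, a general point of $\mathrm{Sec}_2(C)$ lies on a unique secant line, so $\phi$—and therefore $\psi$—is birational, and over $\mathrm{Sec}_2(C)\smallsetminus C$ it restricts to the isomorphism $\mathbb{P}(\mathcal{E})\smallsetminus D\xrightarrow{\ \sim\ }\mathrm{Sec}_2(C)\smallsetminus C\subseteq X_r\smallsetminus E$. Thus $\psi$ is a proper, surjective, birational morphism onto the irreducible $3$-fold $S_2$, and to conclude it is an isomorphism it suffices to show $\psi$ is a closed immersion, i.e. (given what is already known off $D$) that $\psi$ is injective and unramified \emph{along $D$}. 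Note $\psi(D)\subseteq E$ since $\psi^{*}E=D$.

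The remaining, and main, step is this local study along $D$. Under $\psi$, a point of $D$ — namely a secant line $\overline{pq}$ together with a marked point $p\in\overline{pq}\cap C$ — is sent to $p\in C$ together with the direction of $\overline{pq}$ at $p$, viewed inside $\mathbb{P}(N_{C/\mathbb{P}^r,p})\cong\mathbb{P}^{r-2}$ (the limit as $q\to p$ being the second osculating direction of $C$ at $p$, which is well defined and nonzero for a rational normal curve). For fixed $p$, a coordinate computation identifies the assignment $q\mapsto [\,\text{direction of }\overline{pq}\text{ at }p\,]$ with a rational normal parametrization $\mathbb{P}^1\to\mathbb{P}^{r-2}$ of a rational normal curve of degree $r-2$, hence a closed embedding into the fiber $\{p\}\times\mathbb{P}^{r-2}\subseteq E$; letting $p$ vary over $C$ shows $\psi|_D\colon D\hookrightarrow E$ is a closed immersion. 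Since $\psi^{*}E=D$ and $D$ is smooth (a double cover of $C^{[2]}\cong\mathbb{P}^2$ branched over the smooth diagonal conic), the local equation of $E$ pulls back to a local equation of the smooth divisor $D$, forcing $d\psi$ to be injective at each point of $D$ as well; combined with $\psi(\mathbb{P}(\mathcal{E})\smallsetminus D)\cap E=\varnothing$ this gives injectivity and unramifiedness of $\psi$ along $D$, completing the proof. The delicate point I expect to spend the most effort on is making this analysis along $D$ rigorous and uniform in $p$; a quicker alternative is to invoke the classical fact (Bertram, Vermeire) that for an embedding of degree $\ge 2g+3$ the secant variety $\mathrm{Sec}_2(C)$ is normal and $\phi\colon\mathbb{P}(\mathcal{E})\to\mathrm{Sec}_2(C)$ is precisely its blow-up along $C$, together with the standard identification of $S_2$ with $\mathrm{Bl}_C\mathrm{Sec}_2(C)$ (the proper transform of an irreducible subvariety containing the center is the blow-up of that subvariety along the center), which identifies $S_2$ and $\mathbb{P}(\mathcal{E})$ as one and the same blow-up.
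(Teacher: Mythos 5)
Your proof is correct in outline but follows a genuinely different route from the paper. The paper's argument is a one-sentence rigidity claim: both $\pi|_{S_2}\colon S_2\to \mathrm{Sec}_2(C)$ and $\phi\colon\mathbb{P}(\mathcal{E})\to\mathrm{Sec}_2(C)$ are proper birational morphisms and ``each contracts all fibers that the other contracts,'' so the induced birational map between them is an isomorphism. You instead construct the map: the Cartesian square of Remark \ref{D} gives $\phi^{-1}(\mathcal{I}_C)\cdot\mathcal{O}_{\mathbb{P}(\mathcal{E})}=\mathcal{I}_D$ with $D$ Cartier, the universal property of the blow-up produces $\psi\colon\mathbb{P}(\mathcal{E})\to X_r$ with image $S_2$ and $\psi^*E=D$, and you then verify $\psi$ is a proper injective unramified morphism (hence a closed immersion) by the no-trisecant property off $D$ and the tangent-line-projection analysis along $D$. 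Your approach buys more: it actually exhibits the morphism $\psi$ that the paper uses without comment in Lemmas \ref{psi*} and \ref{push} (in particular the identities $\psi^*H=\zeta$ and $\psi^*E=D$ fall out of your construction for free), and it avoids the logical thinness of ``same contracted fibers $\Rightarrow$ isomorphic,'' which as stated is not a sufficient criterion (distinct small resolutions of a node contract the same fiber). Your fallback citation to Bertram--Vermeire is exactly the standard reference for this statement and is even the route taken in an earlier commented-out draft inside the paper's source.

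One spot to shore up: your claim that $\psi$ restricts to an isomorphism $\mathbb{P}(\mathcal{E})\smallsetminus D\xrightarrow{\sim}\mathrm{Sec}_2(C)\smallsetminus C$ does not follow from bijectivity alone; since your endgame is ``injective $+$ unramified $+$ proper $\Rightarrow$ closed immersion,'' you need $d\psi$ injective off $D$ as well, which amounts to the classical fact that $\mathrm{Sec}_2(C)$ of a rational normal curve is smooth away from $C$ (or a direct check that the differential of $\phi$ is injective at points $(\ell,x)$ with $x\notin C$). This is standard and citable, so it is a gap in exposition rather than in substance, but it should be stated explicitly alongside the along-$D$ analysis you already flag as the delicate part.
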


\begin{proof}
Each of the two birational morphisms $ \pi: S_{2} \rightarrow \mathbb{P}^{n}$ and 
$ \phi: \mathbb{P}\mathcal{E} \rightarrow \mathbb{P}^{n}$ contracts all fibers that the other
contracts.
\end{proof}

In order to write $\psi_{*}\mathrm{CH}^{*}(S_{2})$ in terms of generators of $\mathrm{CH}^{*}(X_{n})$ in Proposition \ref{chowRNC}, we need the following lemma.

\begin{lemma}\label{psi*}
  Using Notation \ref{notationPE}, we have
  \begin{eqnarray*}
    \psi^{*}H& = &\zeta, \\
\psi^{*}E& = &D=2\zeta -(n-2)h, \\
\psi^{*}j_{*}(h_{1})& = &\zeta h-(n-1)h^{2}, \\
    \psi^{*}j_{*}(h_{2})& = &(n-2)(\zeta h +h^{2}).
  \end{eqnarray*}
\end{lemma}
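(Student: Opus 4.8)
**Proof proposal for Lemma \ref{psi*}.**

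The plan is to compute each pullback by combining the geometry of the two birational contractions $\pi \colon S_2 \to \mathbb{P}^n$ and $\phi \colon \mathbb{P}(\mathcal{E}) \to \mathbb{P}^n$ (identified via Lemma \ref{S2}, with $\psi$ the resulting isomorphism $\mathbb{P}(\mathcal{E}) \xrightarrow{\sim} S_2$) with the intersection theory already recorded in Propositions \ref{chowsec} and \ref{eff1PE}. The Picard group of $\mathbb{P}(\mathcal{E})$ is freely generated by $h$ and $\zeta$, so each of the four classes on the left is determined by its two intersection numbers against a basis of curve classes, or equivalently by matching against the relations in $\mathrm{CH}^*(\mathbb{P}(\mathcal{E}))$ from Proposition \ref{chowsec}.

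First I would establish $\psi^*H = \zeta$: since $H = \pi^*\mathcal{O}_{\mathbb{P}^n}(1)$ restricted to $S_2$ and $\pi|_{S_2}$ corresponds under $\psi$ to $\phi = q_2 \circ g$, we have $\psi^* H = \phi^* \mathcal{O}_{\mathbb{P}^n}(1) = \zeta$ by Notation \ref{notationPE} (this is exactly the computation used in the proof of Proposition \ref{eff1PE}). Next, $\psi^* E$: the divisor $E|_{S_2}$ is the exceptional locus of $\pi|_{S_2}$, i.e. the ramification divisor over $C$, which under $\psi$ is precisely the incidence divisor $D$ of Remark \ref{D}; so $\psi^* E = D = 2\zeta - (n-2)h$ by Proposition \ref{eff1PE}. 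For the two codimension-two classes, I would write $\psi^* j_*(h_1)$ and $\psi^* j_*(h_2)$ as unknown combinations $a\zeta h + b h^2$ (using $h^3 = 0$ and the relation $\zeta^2 = (n-1)h\zeta - \binom{n}{2} h^2$ to reduce to this two-dimensional span) and pin down the coefficients by intersecting against $\zeta$ and against $h$. The key inputs are: $j_*(h_1) \cdot H^{n-1}$ and $j_*(h_2)\cdot H^{n-1}$ computed inside $X_n$ via Proposition \ref{chowRNC} (degrees of the classes), together with the projection formula $\psi^*(\alpha) \cdot \psi^*(\beta) = \psi^*(\alpha\beta)$ since $\psi$ is an isomorphism, and the geometric identification of $h_1, h_2$ as pullbacks of hyperplanes from the two factors of $E \cong \mathbb{P}^1 \times \mathbb{P}^{r-2}$. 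Concretely, $h$ is the class of a cone fiber and $\zeta = \psi^* H$, so $\psi^*j_*(h_i) \cdot h$ is the degree of $h_i$ restricted to a cone over a point of $C$, while $\psi^*j_*(h_i) \cdot \zeta$ is $j_*(h_i) \cdot H$ pulled back — each a short computation in the two Chow rings.

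The main obstacle I expect is correctly identifying the restriction $E|_{S_2}$ and the geometric meaning of $h_1$ versus $h_2$ on it: one must check that the exceptional divisor $E$ of $\pi \colon X_n \to \mathbb{P}^n$ meets $S_2$ in the divisor that $\psi$ pulls back to $D$, and then track which of the two $\mathbb{P}^1$- and $\mathbb{P}^{r-2}$-hyperplane classes corresponds to the fiber direction of $E \cong \mathbb{P}(N_{C/\mathbb{P}^n})$ over $C$ versus the base $C \cong \mathbb{P}^1$. Getting the normalization right (the factor $(n-2)$ appearing in $\psi^* j_*(h_2)$, and the sign in $-(n-1)h^2$) is where the relation $\zeta^2 - (n-1)h\zeta + \binom{n}{2}h^2 = 0$ from Proposition \ref{chowsec} and the class $-\xi = -h_2 + (r+2)h_1$ from Proposition \ref{chowRNC} must be deployed carefully; once the two intersection numbers for each class are computed, solving the resulting $2\times 2$ linear systems is routine.
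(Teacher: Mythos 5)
Your treatment of the first two identities matches the paper: $\psi^*H=\phi^*\mathcal{O}_{\mathbb{P}^n}(1)=\zeta$ and $\psi^*E=D=2\zeta-(n-2)h$ via Proposition \ref{eff1PE}. For the two codimension-two classes, however, your plan as stated does not close. The ``key inputs'' you name, $j_*(h_1)\cdot H^{n-1}$ and $j_*(h_2)\cdot H^{n-1}$, are zero for dimension reasons (and even $j_*(h_i)\cdot H^{n-2}$ vanishes because $H\cdot j_*(\alpha)=j_*(\alpha\cdot n h_1)$ and $h_1^2=0$), so they determine nothing. More seriously, the two intersection numbers you actually need to solve your $2\times 2$ systems are $\psi^*j_*(h_i)\cdot h=j_*(h_i)\cdot\psi_*(h)$ and $\deg\psi^*\bigl(j_*(h_i)\cdot H\bigr)=j_*(h_i)\cdot H\cdot[S_2]$; these require the classes $\psi_*(h)$ (the cone over a point of $C$) and $[S_2]$ in $\mathrm{CH}^*(X_n)$, which the paper only derives in Lemma \ref{push} \emph{using} Lemma \ref{psi*}. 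So the computation is circular unless you replace those inputs by direct geometric point counts (e.g.\ showing the proper transform of the cone meets a general exceptional fiber $E_q\cong\mathbb{P}^{n-2}$ in exactly one reduced point, with no excess contribution over the vertex), which you gesture at but do not carry out and which is precisely where multiplicities are easy to get wrong.

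The fix is the paper's route, which is a sharper use of the ring-homomorphism property you already invoke: instead of intersecting against test classes, pull back the \emph{products} $H\cdot E$ and $E^2$. From Proposition \ref{chowRNC} one has $H\cdot E=n\,j_*(h_1)$ and $E^2=-j_*(\xi)=(n+2)j_*(h_1)-j_*(h_2)$, so
\begin{equation*}
n\,\psi^*j_*(h_1)=\zeta\cdot\bigl(2\zeta-(n-2)h\bigr),\qquad
\psi^*j_*(h_2)=(n+2)\,\psi^*j_*(h_1)-\bigl(2\zeta-(n-2)h\bigr)^2,
\end{equation*}
and expanding with the relation $\zeta^2=(n-1)h\zeta-\binom{n}{2}h^2$ of Proposition \ref{chowsec} gives $\zeta h-(n-1)h^2$ and $(n-2)(\zeta h+h^2)$ directly. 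This uses only the two identities you have already established and no auxiliary intersection numbers; I recommend you restructure the second half of your argument this way.
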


\begin{proof}
  The first equality is because
  $$\zeta=\phi^{*}\mathcal{O}_{\mathbb{P}^{n}}(H)=\psi^{*}\pi^{*}\mathcal{O}_{\mathbb{P}^{n}}(H)= \psi^{*}H. $$
  The second equality is by definition of $D$ and Proposition \ref{eff1PE}. 
  The third equality can be shown by noticing
  $$ \psi^{*}(nj_{*}(h_{1}))=\psi^{*}(H\cdot E)=\zeta\cdot(2\zeta -(n-2)h)=n\zeta h-(n-1)n h^{2}. $$ 
  For the last equality, note that we may compute $\psi^{*}(E^{2})$ in two ways by using the second equality and Proposition \ref{chowRNC}:
    \begin{eqnarray*}
      \psi^{*}(E^{2}) & = &(2\zeta -(n-2)h)^2=4\zeta h -(n^{2}+2n-4)h^{2},\\
      \psi^{*}(E^{2}) & = &\psi^{*}((n+2)j_{*}(h_{1})-j_{*}(h_{2}))=(n+2)\zeta h-(n+2)(n-1)h^{2}-\psi^{*}j_{*}(h_{2}).
    \end{eqnarray*}
Comparing coefficients, we have $ \psi^{*}j_{*}(h_{2})=(n-2)(\zeta h + h^{2})$.
\end{proof}

\begin{lemma}\label{push}
  Using Notation \ref{notationPE}, we have
  \begin{eqnarray*}
    \psi_{*}h & = &(n-1)H^{n-2}-j_{*}(h_{2}^{n-3})-2nj_{*}(h_{2}^{n-4}h_{1}),\\
    \psi_{*}[S_{2}] & = &\binom{n-1}{2}H^{n-3}-(n-2)j_{*}(h_{2}^{n-4})-(n+2)(n-2)j_{*}(h_{2}^{n-5}h_{1}),\\
    \psi_{*}\zeta & = &\binom{n-1}{2}H^{n-2} -n(n-2)j_{*}(h_{2}^{n-4}h_{1}).
  \end{eqnarray*}
\end{lemma}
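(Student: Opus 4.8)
The plan is to compute each pushforward by the projection formula, using the descriptions of $\psi^*H$, $\psi^*E$, $\psi^*j_*(h_1)$, $\psi^*j_*(h_2)$ from Lemma \ref{psi*} together with the ring presentations in Propositions \ref{chowsec} and \ref{chowRNC}. The key point is that $\psi: S_2 \cong \mathbb{P}(\mathcal{E}) \to X_n$ is the inclusion of a divisor, so $\psi_*(\psi^*\alpha \cdot \beta) = \alpha \cdot \psi_*\beta$ for any $\alpha \in \mathrm{CH}^*(X_n)$ and $\beta \in \mathrm{CH}^*(\mathbb{P}(\mathcal{E}))$; in particular $\psi_*(1) = [S_2]$ and $\psi_*(\psi^*\alpha) = \alpha \cdot [S_2]$. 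So the whole computation reduces to: (i) express $[S_2] \in \mathrm{CH}^2(X_n)$ in terms of $H, E$ (equivalently, in terms of the basis $H^2, j_*(h_1), j_*(h_2)$ — note $[S_2] = (n-1)H - (n-2)E$ by Propositions \ref{evendivisor}/\ref{evensec} in the appropriate degree, or one derives it directly), and (ii) compute $\psi_*(h)$ and $\psi_*(\zeta)$, since the other classes in the lemma follow from $\psi_*(\zeta) = \psi_*(\psi^*H) = H\cdot[S_2]$ and a parallel formula — but $h$ is \emph{not} a pullback, so it needs separate treatment.

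For $\psi_*(h)$: I would use that $h$ is the pullback of the hyperplane class from $C^{[2]} \cong \mathbb{P}^2$, hence $h$ is represented by $\phi^{-1}$ of a line in $C^{[2]}$, i.e. the proper transform in $X_n$ of the cone over $C$ with vertex a point of $C$ (this is exactly the geometric identification used in the commented-out $\mathbb{P}^4$ computation: $h = h_{p_0}$, the proper transform of the secant cone $\mathrm{Sec}_1$-cone). Concretely, $h$ as a surface class in $X_n$ is the proper transform of a surface of degree $n-1$ (the cone over the rational normal curve from a point of it is a surface of degree $n-1$ in $\mathbb{P}^{n-1} \subset \mathbb{P}^n$... wait, one must be careful: the cone over $C \subset \mathbb{P}^n$ from a point of $C$ lives in $\mathbb{P}^n$ and has degree $n-1$). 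So $\pi_*\psi_*(h) = (n-1)H^{n-2}$, which pins down the $H^{n-2}$-coefficient; the $j_*(h_2^{n-3})$ and $j_*(h_2^{n-4}h_1)$ coefficients are then found by intersecting with suitable test classes (e.g. powers of $H$ and $E$) and matching against the intersection numbers read off from Proposition \ref{chowRNC}. Alternatively — and this is cleaner — I would write $h \in \mathrm{CH}^1(\mathbb{P}(\mathcal{E}))$ using Lemma \ref{psi*}: from $\psi^*j_*(h_1) = \zeta h - (n-1)h^2$ and $\psi^*j_*(h_2) = (n-2)(\zeta h + h^2)$ one can solve for $\zeta h$ and $h^2$ in the subring $\psi^*\mathrm{CH}^*(X_n)$, but $h$ itself is not in the image of $\psi^*$; so I'd instead express $h$ via $\zeta$ and $D = \psi^*E = 2\zeta - (n-2)h$, giving $h = \frac{1}{n-2}(2\zeta - \psi^*E)$, hence $\psi_*(h) = \frac{1}{n-2}(2\psi_*\zeta - E\cdot[S_2])$. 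This reduces everything to computing $\psi_*\zeta$ and $[S_2]$.

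For $\psi_*(\zeta)$: since $\zeta = \psi^*H$, we get $\psi_*\zeta = H \cdot [S_2]$ directly by the projection formula, so once $[S_2] = (n-1)H - (n-2)E \in \mathrm{CH}^1$... no wait, $[S_2]$ here is a codimension-$1$ class on $X_n$ only when $n$ is... Actually $\mathrm{Sec}_2(C)$ has dimension $3$ in $\mathbb{P}^n$, so $[S_2] \in \mathrm{CH}^{n-3}(X_n)$, i.e. it is the class called $\psi_*[S_2]$ in the statement. Let me restart this thought: $\psi_*(1)$ where $1 \in \mathrm{CH}^0(\mathbb{P}(\mathcal{E}))$ gives $[S_2] \in \mathrm{CH}^{n-3}(X_n)$, and $\psi_*(\zeta) \in \mathrm{CH}^{n-2}(X_n)$. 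The first formula in the lemma, $\psi_*[S_2]$, is ambiguous notation but means $\psi_*(1) = [S_2]$; to compute it I'd note $\deg \mathrm{Sec}_2(C) = \binom{n-1}{2}$ (this is the classical secant variety degree formula, giving the $H^{n-3}$-coefficient $\binom{n-1}{2}$) and then determine the two $j_*$-coefficients by intersecting $[S_2]$ against test classes $H^k E^\ell$ and comparing with the known multiplicity of $\mathrm{Sec}_2(C)$ along $C$. Then $\psi_*\zeta = H\cdot[S_2]$ and $\psi_*h = \frac{1}{n-2}(2H\cdot[S_2] - E\cdot[S_2]) = \frac{1}{n-2}(2H-E)\cdot[S_2]$, both computed by multiplying out in the ring of Proposition \ref{chowRNC} and using $h_1^2 = 0$, $h_2^{n-1} = 0$, $\xi = h_2 - (n+2)h_1$, $H\cdot j_*(\alpha) = j_*((r+1)h_1\alpha)$, etc. The main obstacle I anticipate is bookkeeping: correctly computing $[S_2] = \psi_*(1)$ — i.e. getting the $j_*(h_2^{n-4})$ and $j_*(h_2^{n-5}h_1)$ coefficients right — since this requires either a careful excess-intersection/multiplicity computation for $\mathrm{Sec}_2(C)$ along $C$, or pushing forward relations through $\psi^*$, and the resulting polynomial identities in $n$ must be verified against the $\mathbb{P}^4$ base case. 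Everything downstream is then routine ring arithmetic.
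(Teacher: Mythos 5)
Your plan is sound and, at its core, runs on the same engine as the paper's proof: the projection formula combined with the pullback formulas of Lemma \ref{psi*}, evaluated against test classes and computed inside $\mathrm{CH}^*(\mathbb{P}(\mathcal{E}))$ via Proposition \ref{chowsec}. Two points of comparison. First, your treatment of $\psi_*h$ is genuinely different and cleaner than the paper's: from $\psi^*E=2\zeta-(n-2)h$ and $\psi^*H=\zeta$ you get $h=\tfrac{1}{n-2}\psi^*(2H-E)$ (so $h$ \emph{is}, rationally, a pullback, despite your momentary claim otherwise), hence $\psi_*h=\tfrac{1}{n-2}(2H-E)\cdot\psi_*[S_2]$, and one checks this reproduces $(n-1)H^{n-2}-j_*(h_2^{n-3})-2nj_*(h_2^{n-4}h_1)$. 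The paper instead pins the leading coefficient geometrically ($\psi_*h$ is the proper transform of the degree-$(n-1)$ cone over $C$ with vertex on $C$) and extracts $a,b$ by intersecting with $j_*(h_1)$ and $j_*(h_2)$; your route buys a reduction of all three formulas to the single class $\psi_*[S_2]$, at the cost of making everything hinge on that one computation. Second, that computation is exactly where your write-up stops short: you offer two methods (a multiplicity-of-$\mathrm{Sec}_2(C)$-along-$C$ analysis, or pushing test intersections through $\psi^*$) without executing either. The second option is precisely what the paper does — it computes $d=\psi_*(\zeta^3)=\binom{n-1}{2}$ (no appeal to the classical degree formula needed), and gets $e=n-2$, $f=(n+2)(n-2)$ from $\psi_*[S_2]\cdot(j_*(h_2)\cdot H)$ and $\psi_*[S_2]\cdot(j_*(h_2)\cdot E)$ using Lemma \ref{psi*} — and it works; I would commit to that route and drop the multiplicity argument, which is harder to make rigorous. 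Finally, $\psi_*\zeta=H\cdot\psi_*[S_2]$ is identical in both treatments.
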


\begin{proof}
  Note that in $X_{n}$ a cone of $ C$ with cone point on $ C$ has class $ \psi_{*}h$. Hence we may write $ \psi_{*}h=(n-1)H^{n-2}-aj_{*}(h_{2}^{n-3})-bj_{*}(h_{2}^{n-4}h_{1})$ for some $a,b \in \mathbb{Z}$. By Proposition \ref{chowRNC}, Proposition \ref{chowsec} and Lemma \ref{psi*}, we have
\begin{eqnarray*}
  a&=&\psi_{*}h\cdot j_{*}(h_{1})=h\cdot \psi^{*}j_{*}(h_{1})=h\cdot(\zeta h-(n-1)h^{2})=1,\\
  b-(n+2) &=& \psi_{*}h\cdot j_{*}(h_{2})=h\cdot \psi^{*}j_{*}(h_{2})=h\cdot((n-2)\zeta h+(n-2)h^{2})=n-2.
\end{eqnarray*}
Next, we write $\psi_{*}[S_{2}]=dH^{n-3}-ej_{*}(h_{2}^{n-4})-fj_{*}(h_{2}^{n-5}h_{1})$ for some $d,e,f\in \mathbb{Z}$. By Proposition \ref{chowRNC}, Proposition \ref{chowsec} and Lemma \ref{psi*}, we have
\begin{eqnarray*}
  d &= &\psi_{*}[S_{2}]\cdot H^{3}=\psi_{*}((\psi^{*}H)^{3})=\psi_{*}(\zeta^{3})=\binom{n-1}{2},\\
  ne & = & \psi_{*}[S_{2}]\cdot j_{*}(nh_{1}h_{2})=\psi_{*}[S_{2}]\cdot (j_{*}(h_{2})\cdot H)\\
    & = &\psi_{*}(\psi^{*}j_{*}(h_{2})\cdot \psi^{*}(H)) =\psi_{*}[(n-2)(\zeta h +h^{2})\cdot \zeta]=n(n-2),\\
  2(n+2)e-f&= &\psi_{*}[S_{2}]\cdot (j_{*}(h_{2})\cdot E)=\psi_{*}(\psi^{*}j_{*}(h_{2})\cdot \psi^{*}(E))\\
  &=&\psi_{*}[(n-2)(\zeta h + h^{2})(2\zeta - (n-2)h)]=(n-2)(n+2).
\end{eqnarray*}
Hence $d=\binom{n-1}{2}, e=n-2, f=(n+2)(n-2)$.
Finally, we have
\[\psi_{*}\zeta=\psi_{*}(\psi^{*}H)=\psi_{*}[S_{2}]\cdot H=\binom{n-1}{2}H -n(n-2)j_{*}(h_{2}^{n-4}h_{1}).\]
\end{proof}

\begin{lemma}\label{nefn-1}
  The class $H^{2}-(n-1)j_{*}(h_{1})$ is nef.
\end{lemma}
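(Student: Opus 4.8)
The plan is to realise $H^{2}-(n-1)j_{*}(h_{1})$ as the class of a surface that moves in a family covering $X_{n}$, and then to test nefness against each effective surface in turn. First I identify the class geometrically: if $p_{1},\dots,p_{n-1}$ are points of the rational normal curve $C\subseteq\mathbb{P}^{n}$, their span $\Lambda=\overline{p_{1}\cdots p_{n-1}}$ is an $(n-1)$-secant $\mathbb{P}^{n-2}$ which, since any $n-1$ points of $C$ are in linearly general position, meets $C$ transversally in precisely those $n-1$ reduced points. By Proposition~\ref{chowblowup} each such point contributes one copy of the fibre class $j_{*}(h_{1})$ of $E\to C$, so the strict transform $\widetilde{\Lambda}\subseteq X_{n}$ satisfies $\pi^{*}H^{2}=[\widetilde{\Lambda}]+(n-1)j_{*}(h_{1})$, i.e.\ $[\widetilde{\Lambda}]=H^{2}-(n-1)j_{*}(h_{1})$. (One can check the coefficient $n-1$ by intersecting both sides with $E$.) These planes are parametrised by $C^{(n-1)}\cong\mathbb{P}^{n-1}$; as $C$ is not secant-defective, $\dim\mathrm{Sec}_{n-1}(C)=\min(2n-3,n)=n$ for $n\ge 3$, so the $\Lambda$ cover $\mathbb{P}^{n}$ and the $\widetilde{\Lambda}$ cover a dense open subset of $X_{n}$.

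Since $\mathrm{Eff}_{2}(X_{n})$ is generated by classes of irreducible surfaces, it suffices to prove $(H^{2}-(n-1)j_{*}(h_{1}))\cdot[S]\ge 0$ for each irreducible surface $S\subseteq X_{n}$. If $S\subseteq E$, write $[S]_{E}=a\,h_{1}h_{2}^{n-4}+b\,h_{2}^{n-3}$ with $a,b\ge 0$ (effective surface classes on $E\cong\mathbb{P}^{1}\times\mathbb{P}^{n-2}$ are nonnegative combinations of these two); the multiplicative structure of Proposition~\ref{chowRNC} gives $H^{2}\cdot j_{*}[S]_{E}=0$ and $j_{*}(h_{1})\cdot j_{*}[S]_{E}=-b$, hence $(H^{2}-(n-1)j_{*}(h_{1}))\cdot[S]=(n-1)b\ge 0$.

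Now suppose $S\not\subseteq E$, and set $\bar S=\pi(S)\subseteq\mathbb{P}^{n}$, an irreducible surface. It suffices to exhibit \emph{one} $(n-1)$-secant $\mathbb{P}^{n-2}$, say $\Lambda$, whose strict transform $\widetilde{\Lambda}$ meets $S$ properly, i.e.\ in dimension $\le 0$: then $\widetilde{\Lambda}$ and $S$ are subvarieties of the smooth variety $X_{n}$ intersecting in the expected dimension, so $[\widetilde{\Lambda}]\cdot[S]$ is represented by an effective $0$-cycle and $(H^{2}-(n-1)j_{*}(h_{1}))\cdot[S]=[\widetilde{\Lambda}]\cdot[S]\ge 0$. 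To produce $\Lambda$ I would run an incidence-correspondence dimension count: on $I=\{(\Lambda,x):x\in\Lambda\cap\bar S\}\subseteq\mathbb{P}^{n-1}\times\bar S$, the fibre of the second projection over $x$ is $\{\Lambda:x\in\Lambda\}$, which has dimension $n-3$ over a general point of $\mathbb{P}^{n}$ (the universal $(n-1)$-secant plane has dimension $2n-3$ and dominates $\mathbb{P}^{n}$); hence $\dim I\le 2+(n-3)=n-1$, so a general $\Lambda$ meets $\bar S$ in a finite set and $\widetilde{\Lambda}\cap S$ is finite away from $E$. Over $E$ one argues separately that $\widetilde{\Lambda}\cap E$ — which is a union of $n-1$ hyperplanes, one in each fibre of $E\to C$ over the $n-1$ general points $p_{i}$ — misses the fixed curve $S\cap E$ for general $\Lambda$, so $\widetilde{\Lambda}\cap S\cap E=\emptyset$.

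I expect the main obstacle to be this last dimension count: one must confirm that the general $(n-1)$-secant plane really meets $\bar S$ in the expected dimension, which needs a separate (but easier, lower-dimensional) argument in the degenerate case where $\bar S$ is contained in the locus of $\mathbb{P}^{n}$ over which the family of $(n-1)$-secant planes has fibres of dimension $>n-3$, together with the transversality bookkeeping over $E$ indicated above — namely that, as $\Lambda$ varies, $\widetilde{\Lambda}\cap F_{p_{i}}$ sweeps out hyperplanes in general position inside the fibre $F_{p_{i}}\cong\mathbb{P}^{n-2}$. The rest is routine intersection theory with Proposition~\ref{chowRNC}.
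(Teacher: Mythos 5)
Your overall strategy is the same as the paper's: handle surfaces inside $E$ by direct computation, and for $S\not\subseteq E$ find a single $(n-1)$-secant $\mathbb{P}^{n-2}$ whose proper transform meets $S$ properly, via an incidence-correspondence dimension count. The first two steps are fine. But the proof has a genuine gap exactly where you flag "the main obstacle": your bound $\dim I\le 2+(n-3)$ uses the fibre dimension of $\{\Lambda : x\in\Lambda\}$ over a \emph{general} point of $\mathbb{P}^{n}$, whereas what you need is the fibre dimension over a general point of $\bar S$. A priori $\bar S$ could be contained in the jump locus where every point lies on an $(n-2)$-dimensional (or larger) family of $(n-1)$-secant planes; then $\dim I$ could be $n$, the projection to the $\mathbb{P}^{n-1}$ of secant planes could be surjective with one-dimensional fibres, and \emph{every} $\widetilde\Lambda$ would meet $S$ in a curve. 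Ruling this out is not a routine residual case — it is the entire content of the lemma, and deferring it leaves the nefness unproved for precisely the surfaces that could violate it.

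The paper closes this gap with a concrete transversality computation in the Grassmannian: for any $p\notin C$ and any $(n-1)$-secant $L$ through $p$, it shows $T_{[L]}\Psi_{n-1}\cap T_{[L]}\sigma_{2}(p)$ has dimension $n-3$, by writing the tangent space to the secant family in terms of the $2\times n$ matrix $M$ of tangent directions of $C$ at the $n-1$ secant points and using that every $2\times 2$ minor of $M$ is nonsingular (a property special to rational normal curves; this is where the hypothesis on $C$ enters). Hence the fibre $\sigma_{2}(p)\cap\Psi_{n-1}$ has dimension exactly $n-3$ for \emph{every} $p\notin C$, so the jump locus is contained in the curve $C$ and cannot contain the surface $\bar S$. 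Your argument needs this (or an equivalent) statement to be complete; as written, the "easier, lower-dimensional argument in the degenerate case" you invoke is not supplied, and it is not easier — it is the main point. The bookkeeping over $E$ (that $\widetilde\Lambda\cap E$ misses $S\cap E$ for general $\Lambda$) matches the paper's use of base-point-freeness of $(n-1)j_{*}(h_{1}h_{2})$ and is fine.
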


\begin{proof}
We need to show that for any irreducible surface $ Y\subset X_{n}$, $(H^{2}-(n-1)h_{1})\cdot [Y]\geq 0$. If $Y\subset E$, then $ [Y]=j_{*}(ah_{2}^{n-3}+bh_{2}^{n-4}h_{1})$ with $ a,b \geq 0$. We have
$$(H^{2}-(n-1)j_{*}(h_{1}))\cdot [Y]=(H^{2}-(n-1)j_{*}(h_{1}))\cdot j_{*}(ah_{2}^{n-3}+bh_{2}^{n-4}h_{1})=(n-1)a\geq 0. $$

If $ Y \nsubseteq E$, $\pi_{*}(Y)\subset \mathbb{P}^{n}$ is an irreducible surface. We want to show there exist an $ (n-1)$-secant $L\cong \mathbb{P}^{n-2}\subset \mathbb{P}^{n}$, whose proper transform intersects $ Y$ dimensionally transversely.

If not, the intersection
 contains a curve $R=R(L)$. Note that $(H^{2}-(n-1)j_{*}(h_{1}))\cdot E = (n-1)j_{*}(h_{1}h_{2})$ is base point free, for a general $L$ we have $R(L)\not\subset E$. Hence $\pi_{*}(R(L))$ is a curve contained in $L\cap \pi_{*}(Y)$ for a general $L$. By upper semi-continuity of dimension, $L\cap \pi_{*}(Y)$ contains a curve for every $L$. The incidence correspondence
 $$ \Gamma:=\{(p, L)\in \mathbb{P}^{n}\times C^{[n-1]}: L \text{ is $(n-1)$-secant to }C, p\in L\cap \pi_{*}(Y)\}. $$
 has $\mathrm{dim} \Gamma =n$. Let $ p_{1}: \Gamma \rightarrow \mathbb{P}^{n}$ be the first projection. If $ p_{1}(\Gamma)$ has dimension 1, then for every point $ p\in p_{1}(\Gamma)$, all $(n-1)$-secant $L$ contain $p$, which is impossible. Hence $ p_{1}(\Gamma)=\pi_{*}(Y)$ and a general fiber of $p_{1}$ has dimension $(n-2)$. For $p\in \mathbb{P}^{n}$, let
\[
  \begin{array}{c}
    \sigma_{2}(p):=\{[L]\in G(n-1,n+1)|p\in L\},\\
    \Psi_{n-1}:=\{L\in G(n-1,n+1)| L \text{ is $(n-1)$-secant to }C\}.
  \end{array}
\]
Then for $p\in \pi_{*}(Y)$ we have $p_{1}^{-1}(p)=\sigma_{2}(p)\cap \Psi_{n-1}$. If we show that $ \sigma_{2}(p)$ and $ \Psi_{n-1}$ intersect transversely when $ p \notin C$, then a general fiber of $p_{1}$ would have dimension $(n-3)$, a contradiction.
Take $ L \in \sigma_{2}(p)\cap \Psi_{n-1}$ and assume $L\cap C$ consists of the first $ (n-1)$-coordinate points. There exists a $2\times n$ matrix
$$
M=
\begin{pmatrix}
  v_{1,1} & v_{1,2} & \cdots & v_{1,n-1}\\
  v_{2,1} & v_{2,2} & \cdots & v_{2,n-1}
\end{pmatrix},
$$
up to non-zero scaling of each column, whose columns represent tangent vectors of $C$ at $C\cap L$. Since $C$ is a rational normal curve, every $2\times 2$ minor of $M$ has full rank. Using $M$, we may write
$$ T_{[L]}\Psi_{n-1}=\left\{A\in \mathrm{Mat}_{2\times n}: A=
M\cdot (\lambda_{1}, \cdots, \lambda_{n-1})^{t}, 
\lambda_{i}\in \mathbb{C}\right\}.$$
Write $p=[x_{0}, \cdots, x_{n}]\in \mathbb{P}^{n}$, then
$$T_{[L]}\sigma_{2}(p)=\{A\in \mathrm{Mat}_{2\times n}:A(x_{0}, \cdots, x_{n})^{t}=0\}.$$
 Since $p\not\in C$, $ p$ is not a coordinate point in $ L$. Hence there are at least 2 non-zero coordinates of $p$, say $x_{0}, x_{1}\neq 0$. Then for $A\in T_{[L]}\Psi_{n-1}\cap T_{[L]}\sigma_{2}(p)$, we have $ M\cdot (\lambda_{1}, \lambda_{2}, 0, \cdots, 0)^{t}=0$. Since every $2\times 2$ minor of $M$ has full rank, $ \lambda_{1}=\lambda_{2}=0$. Hence $\mathrm{dim}(T_{[L]}\Psi_{n-1}\cap T_{[L]}\sigma_{2}(p))=n-3$.
\end{proof}

\begin{remark}\label{notnefn-2}
The class $H^{3}-(n-2)j_{*}(h_{1}h_{2})$ is not nef: its intersection with $[S_{2}]$ is negative. By a similar argument as the proof of Lemma \ref{nefn-1}, we can show that $[S_{2}]$ is the only irreducible threefold whose intersection with $H^{3}-(n-2)j_{*}(h_{1}h_{2})$ is nagetive. 
\end{remark}

\begin{corollary}\label{eff3}
  The class $[S_{2}]$ spans an extremal ray of $\mathrm{Eff}_{3}(X_{n})$.
\end{corollary}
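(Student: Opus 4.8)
The plan is to deduce the corollary as a formal consequence of Remark \ref{notnefn-2}, using the following elementary extremality criterion: if $V\subseteq X$ is an irreducible subvariety of dimension $k$ in a projective variety $X$ and there is a class $\alpha\in\mathrm{Num}^{k}(X)$ with $\alpha\cdot[V]<0$ while $\alpha\cdot[W]\ge 0$ for every irreducible $k$-dimensional subvariety $W\neq V$, then $[V]$ spans an extremal ray of $\mathrm{Eff}_{k}(X)$. I would apply this with $V=S_{2}$, $k=3$, and $\alpha=H^{3}-(n-2)j_{*}(h_{1}h_{2})$: Remark \ref{notnefn-2} asserts precisely that this $\alpha$ has negative intersection with $[S_{2}]$ and nonnegative intersection with the class of every other irreducible threefold in $X_{n}$.

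To establish the criterion I would argue as follows. Suppose $[S_{2}]=v_{1}+v_{2}$ with $v_{1},v_{2}\in\mathrm{Eff}_{3}(X_{n})$. By definition each $v_{i}$ is a finite sum $\sum_{j}a_{ij}[Y_{ij}]$ with $a_{ij}>0$ and $Y_{ij}$ irreducible threefolds; collecting those terms with $Y_{ij}=S_{2}$, write $v_{i}=b_{i}[S_{2}]+w_{i}$ with $b_{i}\ge 0$ and $w_{i}\in\mathrm{Eff}_{3}(X_{n})$ a nonnegative combination of classes of irreducible threefolds other than $S_{2}$, so that $\alpha\cdot w_{i}\ge 0$ by Remark \ref{notnefn-2}. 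Adding, $(1-b_{1}-b_{2})[S_{2}]=w_{1}+w_{2}$. Now fix an ample class $A$ on $X_{n}$ and consider $\ell:=A^{n-3}\cdot(-)$, which is strictly positive on the class of every irreducible threefold and hence strictly positive on every nonzero class of $\mathrm{Eff}_{3}(X_{n})$. If $1-b_{1}-b_{2}<0$, then $w_{1}+w_{2}+(b_{1}+b_{2}-1)[S_{2}]=0$ is a sum of classes of $\mathrm{Eff}_{3}(X_{n})$ with nonzero summand $(b_{1}+b_{2}-1)[S_{2}]$, contradicting positivity of $\ell$; if $1-b_{1}-b_{2}>0$, then $[S_{2}]=(1-b_{1}-b_{2})^{-1}(w_{1}+w_{2})$ is a nonnegative combination of classes of irreducible threefolds $\neq S_{2}$, forcing $\alpha\cdot[S_{2}]\ge 0$ and contradicting Remark \ref{notnefn-2}. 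Hence $b_{1}+b_{2}=1$, so $w_{1}+w_{2}=0$, and a final application of $\ell$ gives $w_{1}=w_{2}=0$; thus $v_{i}=b_{i}[S_{2}]$ and $[S_{2}]$ spans an extremal ray.

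All the geometric content sits inside Remark \ref{notnefn-2}, and since I am allowed to assume it, the corollary follows from the purely convex-geometric argument above; the only mild subtlety there is that $\mathrm{Eff}_{3}(X_{n})$ need not be closed, which is why I use the strictly positive degree functional $\ell$ rather than appealing to a separating hyperplane. For completeness: proving Remark \ref{notnefn-2} itself is the real work — one checks $\alpha\cdot[S_{2}]<0$ directly from the class of $S_{2}$ in Lemma \ref{push} and the intersection rules of Proposition \ref{chowRNC}, while the uniqueness statement is established by the incidence-correspondence and tangent-space dimension count used in the proof of Lemma \ref{nefn-1}, showing that for an irreducible threefold $Y$ other than $S_{2}$ a general $2$-secant plane of $C$ meets the proper transform of $\pi_{*}(Y)$ in the expected dimension (with the locus $Y\subseteq E$ handled separately by expanding $[Y]$ in $j_{*}(h_{2}^{n-3})$ and $j_{*}(h_{2}^{n-4}h_{1})$). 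That dimension count is the step I expect to be the main obstacle.
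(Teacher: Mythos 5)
Your proof is correct and takes essentially the same route as the paper: both deduce extremality of $[S_{2}]$ from Remark \ref{notnefn-2}, i.e.\ from the fact that $H^{3}-(n-2)j_{*}(h_{1}h_{2})$ pairs nonnegatively with every irreducible threefold other than $S_{2}$ but negatively with $[S_{2}]$ itself. The only difference is presentational: the paper phrases this as the inequality $a\leq 1/(n-2)$ on normalized classes of irreducible threefolds and notes that $[S_{2}]$ violates it, leaving the convex-geometric extremality criterion implicit, whereas you spell that criterion out (correctly, including the use of the ample degree functional to handle non-closedness of the cone).
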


\begin{proof}
Let $ H^{n-3}-ah_{2}^{n-4}-bh_{2}^{n-5}h_{1}$ be in the ray spanned by an irreducible threefold $ Y$. By Remark \ref{notnefn-2}, if $ Y\neq S_{2}$, then $ [Y]\cdot (H^{3}-(n-2)h_{1}h_{2})\geq 0$, we have $ a\leq 1/(n-2)$. However, $[S_{2}]=\binom{n-1}{2}H^{n-3}-(n-2)j_{*}(h_{2}^{n-4})-(n+2)(n-2)j_{*}(h_{2}^{n-5}h_{1})$ does not satisfy this inequality.
\end{proof}

Now we are ready to compute the cone of 2-dimensional effective cycles of $X_{n}$.

\begin{theorem}\label{eff2}
  For every $n\geq 3$, we have
$$\mathrm{Eff}_{2} X_{n}=\langle (n-1)H^{n-2}-j_{*}(h_{2}^{n-3})-2nj_{*}(h_{2}^{n-4}h_{1}), H^{n-2}-3j_{*}(h_{2}^{4}h_{1}), j_{*}(h_{2}^{n-3}), j_{*}(h_{2}^{n-4}h_{1}) \rangle. $$
\end{theorem}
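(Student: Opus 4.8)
The plan is to exhibit four generators, prove each spans an extremal ray by a duality/contraction argument, and then prove there are no other extremal rays by showing that any effective $2$-cycle class lies in the cone they span. Write a candidate $2$-cycle class in the basis $H^{n-2}, j_*(h_2^{n-3}), j_*(h_2^{n-4}h_1)$ of $\mathrm{Num}_2(X_n)$ as $\gamma = cH^{n-2} - a\,j_*(h_2^{n-3}) - b\,j_*(h_2^{n-4}h_1)$; after scaling we may assume $\gamma$ is the class of an irreducible surface $Y$, and by splitting into the cases $Y\subseteq E$ and $Y\not\subseteq E$ we must confine $(c,a,b)$ to the stated polyhedral cone.

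First I would handle the two ``boundary'' generators coming from $E$, namely $j_*(h_2^{n-3})$ and $j_*(h_2^{n-4}h_1)$: these are classes of a $\mathbb{P}^{n-2}$-fiber region and a $\mathbb{P}^1\times\mathbb{P}^{n-4}$ inside $E\cong\mathbb{P}^1\times\mathbb{P}^{n-3}$, they generate $\mathrm{Eff}_2(E)$, and each is contracted by $\pi$, hence each spans an extremal ray of $\mathrm{Eff}_2(X_n)$ (anything mapping to them under $j_*$ is supported on $E$ and the effective cone of $E$ is the positive span of monomials in $h_1,h_2$). Next, for the class $\psi_*h = (n-1)H^{n-2} - j_*(h_2^{n-3}) - 2n\,j_*(h_2^{n-4}h_1)$ — the proper transform of a cone over $C$ with vertex on $C$, whose class was computed in Lemma \ref{push} — I would show extremality by producing a nef divisor, or rather a nef class in $\mathrm{Num}^{n-2}(X_n)=\mathrm{Num}_2(X_n)$'s dual, that vanishes on it; the natural candidate is built from $H^2-(n-1)j_*(h_1)$, which is nef by Lemma \ref{nefn-1}, so that $(H^2-(n-1)j_*(h_1))^{?}$-type products give supporting functionals. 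For the remaining generator $H^{n-2}-3j_*(h_2^{n-4}h_1)$ — the proper transform of a $3$-secant plane of $C$, which exists since a rational normal curve has a unique $3$-secant plane through any three of its points and these sweep out $\mathbb{P}^n$ — I expect extremality to follow because this plane is contracted (or has minimal intersection with) the resolution $S_2\cong\mathbb{P}(\mathcal{E})$ of the secant variety; concretely I would compute its intersection against the nef classes produced from $H^2-(n-1)j_*(h_1)$ and against $[S_2]$ using Lemma \ref{push}, and check it lies on the appropriate faces.

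For the containment (upper bound) direction — which I expect to be the main obstacle — the strategy is to produce enough nef $(n-2)$-dimensional classes to cut out the cone. One pairing comes from restricting to $E$: intersecting $\gamma$ with classes supported on $E$ forces $a\ge 0$ and the combination $b + (\text{something})a \ge 0$ exactly as in the proof of Lemma \ref{nefn-1}. The second key pairing is against $[S_2] = \binom{n-1}{2}H^{n-3} - (n-2)j_*(h_2^{n-4}) - (n+2)(n-2)j_*(h_2^{n-5}h_1)$: if $Y\not\subseteq S_2$, then $Y\cap S_2$ is an effective $1$-cycle, and since $\mathrm{Eff}_1(X_n) = \langle j_*(h_1h_2^{n-3}),\, H^{n-1}-2j_*(h_1h_2^{n-3})\rangle$ is known (Proposition \ref{dim1}), intersecting $\gamma\cdot[S_2]$ and demanding the result be a nonnegative combination of those two generators yields the inequality cutting off the $\psi_*h$-face; the case $Y\subseteq S_2\cong\mathbb{P}(\mathcal{E})$ is then handled separately by pushing forward $\mathrm{Eff}_2(\mathbb{P}(\mathcal{E}))$ via $\psi_*$ using Lemma \ref{push} and Proposition \ref{eff1PE}. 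The last needed inequality — bounding $c$ from above relative to $a$, i.e.\ the face containing $j_*(h_2^{n-3})$ and $H^{n-2}-3j_*(h_2^{n-4}h_1)$ — should come from the nef class governing $3$-secant planes: one shows that through (almost) any point of $\mathbb{P}^n$ there is a $3$-secant plane meeting $\pi_*(Y)$ properly, by an incidence-correspondence dimension count identical in spirit to Lemma \ref{nefn-1} (the $3$-secant planes form an irreducible family, and a point lies on only a bounded-dimensional subfamily unless $Y$ is special). Assembling the four inequalities $a\ge 0$, $b\ge 0$ (or the sheared versions), the $[S_2]$-inequality, and the $3$-secant inequality, and checking that the four given generators are precisely the vertices of the resulting polytope, completes the proof; the genuinely delicate point is verifying that these four functionals suffice — that no additional extremal ray hides in the interior — which is why the explicit matching of the generators to the faces, and the transversality count for the $3$-secant family, are the crux.
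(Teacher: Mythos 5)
Your overall architecture matches the paper's: split into the cases $Y\subseteq E$, $Y\subseteq S_2$, and $Y\not\subseteq S_2$; get $a\le 1/(n-1)$ from the nef class $H^2-(n-1)j_*(h_1)$ of Lemma \ref{nefn-1}; handle $Y\subseteq S_2$ by pushing forward $\mathrm{Eff}^1(\mathbb{P}(\mathcal{E}))$ via Lemma \ref{push}; and conclude by checking the resulting constraint region sits inside the cone spanned by the four (visibly effective) generators. But your second key pairing has a genuine dimensional error. You propose to intersect an irreducible surface $Y\not\subseteq S_2$ with the class $[S_2]=\binom{n-1}{2}H^{n-3}-(n-2)j_*(h_2^{n-4})-(n+2)(n-2)j_*(h_2^{n-5}h_1)$ and read off an effective $1$-cycle. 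However, $S_2$ is a threefold, of codimension $n-3$, so a proper intersection with a surface has dimension $5-n$: this is a curve only when $n=4$, and is empty for $n\ge 6$. The product $\gamma\cdot[S_2]$ does not live in $\mathrm{Num}_1(X_n)$, so Proposition \ref{dim1} cannot be applied to it, and the inequality cutting off the $\psi_*h$-face never materializes.

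What is missing is the paper's replacement of $[S_2]$ by a moving family of \emph{divisors} whose common base locus is $S_2$: for each $(n-4)$-secant linear space $L\cong\mathbb{P}^{n-5}$ of $C$, the proper transform $S_2(L)$ of the cone over $\mathrm{Sec}_2$ of the projected curve has divisor class $3H-2E$, and $\bigcap_L S_2(L)=S_2$. Hence for $Y\not\subseteq S_2$ one can choose an effective divisor of class $3H-2E$ not containing $Y$, and $\gamma(Y)\cdot(3H-2E)$ \emph{is} an effective $1$-cycle; combined with Corollary \ref{relation} and Proposition \ref{dim1} this yields $2b+(3n-8)a\le 6$, which (together with $a,b\ge 0$ and $a\le 1/(n-1)$) already implies containment in the stated cone. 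This also makes your proposed extra ``$3$-secant plane'' functional unnecessary: the facet through $j_*(h_2^{n-3})$ and $H^{n-2}-3j_*(h_2^{n-4}h_1)$ is the inequality $b\le 3$, which falls out of the $(3H-2E)$-computation, and it is unclear how you would realize your $3$-secant constraint as an honest nef class in $\mathrm{Num}^2(X_n)$ in any case. (Two smaller points: $E\cong\mathbb{P}^1\times\mathbb{P}^{n-2}$, not $\mathbb{P}^1\times\mathbb{P}^{n-3}$; and the separate extremality arguments for each generator are redundant once one knows $\mathrm{Eff}_2\subseteq A$ and that all four generators are effective.)
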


\begin{proof}
Take any irreducible surface $ S\subset X_{n}$ that is not contained in $E$. The ray it generates has a unique class $\gamma(S)=H^{n-2}-aj_{*}(h_{2}^{n-3})-bj_{*}(h_{2}^{n-4}h_{1}), a,b \geq 0$.

By Lemma \ref{nefn-1}, $H^{2}-(n-1)j_{*}h_{1}$ is nef,
$$0\leq (H^{2}-(n-1)j_{*}(h_{1}))\cdot \gamma(S)=1-a(n-1).$$
Hence $a\leq 1/(n-1)$. 

Let $L\subset \mathbb{P}^{n}$ be an $(n-4)$-secant linear space $L\cong \mathbb{P}^{n-5}$ of $C$, $P\subset \mathbb{P}^{n}$ be a complementary $\mathbb{P}^{4}$ of $L$, and $\overline{C}\subset P$ be the image of the projection of $C$ from $L$. Let $S_{2}(L)$ be proper transform of the cone of $\mathrm{Sec}_{2}(C\subset \mathbb{P}^{n})$ over $L$. Then $S_{2}(L)$ has class $3H-2E$, since it is the proper transform of the cone of $\mathrm{Sec}_{2}(\overline{C}\subset P)$ over $L$. By moving $L$, we see that the base locus of $3H-2E$ is contained in $\bigcap_{L}S_{2}(L)=S_{2}$. 

If $S\not\subset S_{2}$, by Corollary \ref{relation} the class
$$\gamma(S)\cdot (3H-2E)=(3-2na)H^{n-1}-(2b-(n+8)a)j_{*}(h_{2}^{n-3}h_{1})$$
is a 1-dimensional effective cycle in $\mathrm{NS}_{1}(X_{n})$. By Proposition \ref{dim1}, we have
\begin{equation*}\label{b1}
  \dfrac{2b-(n+8)a}{3-2na}\leq 2,
\end{equation*}
which is equivalent to $2b+(3n-8)a\leq 6$.

By Proposition \ref{eff1PE} and Lemma \ref{push}, we have
\begin{eqnarray*}
  \psi_{*}\mathrm{Eff}_{2}(S_{2})&=&\langle \psi_{*}(h), \psi_{*}(2\zeta -(n-2)h)\rangle \\
  &=&\langle (n-1)H^{n-2}-j_{*}(h_{2}^{n-3})-2nj_{*}(h_{2}^{n-4}h_{1}), (n-2)j_{*}(h_{2}^{-3}) \rangle.
\end{eqnarray*}
Let $A$ be the cone
$$\langle (n-1)H^{n-2}-j_{*}(h_{2}^{n-3})-2nj_{*}(h_{2}^{n-4}h_{1}), H^{n-2}-3j_{*}(h_{2}^{4}h_{1}), j_{*}(h_{2}^{n-3}), j_{*}(h_{2}^{n-4}h_{1}) \rangle, $$
and $B$ be the cone spanned by 
$$\left\{H^{n-2}-ah_{2}^{n-3}-bh_{2}^{n-4}h_{1}\bigg|
  \begin{array}{l}
    0\leq a, b, a\leq 1/(n-1),\\
  2b+(3n-8)a\leq 6
  \end{array}
\right\}. $$
If $S\not\subset S_{2}$, then $\gamma(S)\in B$. If $S\subset S_{2}$, then $\gamma(S)\in A$. Note that $B\subset A$, we have $\mathrm{Eff}_{2}(X_{n})\subset A$. Since the 4 extremal rays of $A$ are all effective, we have $A\subset \mathrm{Eff}_{2}(X_{n})$. 
\end{proof}

Note that in Theorem \ref{eff2}, the class $(n-1)H^{n-2}-j_{*}(h_{2}^{n-3})-2nj_{*}(h_{2}^{n-4}h_{1})$ is the proper transform of the cone of $C$ over a point on $C$. It generates an extremal ray in $\mbox{Eff}_{2}(X_{n})$. Motivated by this fact, Proposition \ref{evendivisor}, Proposition \ref{odddivisor}, and Corollary \ref{eff3}, we have the following conjecture.

\begin{conjecture}\label{extremal}
For every $2\leq d \leq n/2$, the proper transform of $\mbox{Sec}_{d}(C)$ generates an extremal ray of $\mbox{Eff}_{2d-1}$. For every $1\leq d < n/2$, the proper transform of the cone of $\mbox{Sec}_{d}(C)$ over a point on $C$ generates an extremal ray of $\mbox{Eff}_{2d}$. 
\end{conjecture}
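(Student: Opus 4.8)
The plan is to establish, for each secant class $[S_d]$ and each secant‑cone class $[\mathrm{Cone}_x(\mathrm{Sec}_d(C))]$ appearing in the conjecture, the same kind of certificate used in Corollary~\ref{eff3} and in the divisor cases: a \emph{separating functional}. For Part A one wants a class $\delta_d\in\mathrm{Num}^{2d-1}(X_n)$ with $\delta_d\cdot[S_d]<0$ and $\delta_d\cdot[Y]\ge 0$ for every irreducible $(2d-1)$-fold $Y\neq S_d$; granting this, the argument of Corollary~\ref{eff3} applies verbatim, since pairing any decomposition $[S_d]=\gamma_1+\gamma_2$ (with $\gamma_i$ effective) against $\delta_d$ forces each $\gamma_i\in\mathbb{R}_+[S_d]$. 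Note that $S_d$ is irreducible of dimension $2d-1$, so the only effective $(2d-1)$-cycle supported on $S_d$ is a multiple of $[S_d]$; the content lies entirely in cycles not contained in $S_d$. For Part B the cone class sits in $\mathrm{Eff}_{2d}$, which has higher rank, so I expect to follow instead the template of Theorem~\ref{eff2}: bound $\mathrm{Eff}_{2d}(X_n)$ from outside by nef classes (generalizing Lemma~\ref{nefn-1}) together with the constraint obtained by intersecting a $2d$-fold with a cone of $\mathrm{Sec}_d(C)$ over a complementary secant plane (generalizing the role of $3H-2E$ in that proof), reducing matters to the description of $\mathrm{Eff}_{2d-1}(X_n)$ from Part~A plus the effective cycles inside $S_d$; one then checks that the resulting polyhedral cone has $[\mathrm{Cone}_x(\mathrm{Sec}_d(C))]$ as a genuine generating ray.

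The candidate functionals should be classes of proper transforms of maximal families of secant linear spaces of complementary dimension, exactly as $H^2-(n-1)j_*(h_1)$ and $H^3-(n-2)j_*(h_1h_2)$ arose in Lemma~\ref{nefn-1} and Remark~\ref{notnefn-2}. For Part A I would take $\Lambda$ to be the span of $n-2d+2$ general points of $C$, an $(n-2d+2)$-secant $\mathbb{P}^{n-2d+1}$, and set $\delta_d=[\widetilde{\Lambda}]$; for the divisor-reducing step of Part B, the span of $n-2d+1$ general points of $C$. The first concrete task is to compute $[S_d]$, the cone class, and $[\widetilde{\Lambda}]$ in the presentation of Proposition~\ref{chowRNC}: for $[S_d]$ one uses that the secant bundle $\mathbb{P}(\mathcal{E}_{n,d})$ is a smooth variety mapping birationally onto $S_d$ (the map $\phi$ contracting $D$ to $C$ and hence factoring through the blow-up), computes $\psi^{*}$ and $\psi_{*}$ of the generators as in Lemmas~\ref{psi*} and~\ref{push}, and invokes the classical value $\deg\mathrm{Sec}_d(C)=\binom{n-d+1}{d}$ for a rational normal curve; for $[\widetilde{\Lambda}]$ one computes the excess term of $\pi^{*}H^{2d-1}=[\widetilde{\Lambda}]+(\text{class on }E)$. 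Granting the expected shape $[\widetilde{\Lambda}]=H^{2d-1}-(n-2d+2)j_*(h_1h_2^{2d-3})$, the inequality $\delta_d\cdot[S_d]<0$ reduces by Proposition~\ref{chowRNC} to a single numerical identity, namely $\deg\mathrm{Sec}_d(C)<(n-2d+2)\cdot(\text{leading }E\text{-coefficient of }[S_d])$, which reproduces $(n-2)(3-n)/2<0$ when $d=2$ and should hold for all $d$ in range by elementary binomial estimates.

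The main step, and the expected bottleneck, is the moving lemma: for any irreducible $Y$ of the relevant dimension other than $S_d$ (resp. $\mathrm{Cone}_x(\mathrm{Sec}_d(C))$), a general member of the chosen family of proper transforms of secant planes meets $Y$ in the expected dimension, hence pairs nonnegatively with $[Y]$. This is the direct generalization of Lemma~\ref{nefn-1} (which is precisely the case $d=1$ of Part~B) and of the assertion in Remark~\ref{notnefn-2}. The proof should follow the same pattern: if the intersection with a general $\widetilde{\Lambda}$ is everywhere excessive, form the incidence correspondence $\Gamma=\{(p,\Lambda):p\in\Lambda\cap\pi_*(Y)\}$, bound $\dim\Gamma$, and analyze the projection to $\mathbb{P}^n$, ruling out that its image is a single point (all secant planes in the family would then contain it) and obtaining a dimension contradiction when the image is all of $\pi_*(Y)$. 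The numerical core is again a transversality statement in $G(n-2d+1,n)$: for $p\notin C$ the Schubert cycle of planes through $p$ should meet the locus of $(n-2d+2)$-secant planes transversally, which for a rational normal curve reduces, as in the matrix computation of Lemma~\ref{nefn-1}, to every minor of the $2\times(n-2d+2)$ matrix of tangent directions at the secant points having full rank. I expect this to be genuinely harder than the $d=1$ case: the codimension of the Schubert conditions grows with $d$, the correspondence $\Gamma$ becomes higher-dimensional, and one must exclude intermediate behaviors of the projection to $\mathbb{P}^n$. It may be cleanest to run the whole argument by induction on $d$, organized via the sweep $\mathrm{Sec}_d(C)=\bigcup_{x\in C}\mathrm{Cone}_x(\mathrm{Sec}_{d-1}(C))$, with base cases Theorem~\ref{eff2} ($d=1$, Part~B) and Corollary~\ref{eff3} ($d=2$, Part~A) and the extremal values $d=\lfloor r/2\rfloor$ already supplied by Propositions~\ref{evendivisor} and~\ref{odddivisor}.
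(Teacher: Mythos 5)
The statement you are trying to prove is stated in the paper as Conjecture \ref{extremal}; the paper offers no proof of it, only the motivating evidence of Propositions \ref{evendivisor} and \ref{odddivisor}, Theorem \ref{eff2} and Corollary \ref{eff3}. Your proposal is a faithful and sensible extrapolation of exactly those arguments, and your sanity check that $\delta_2\cdot[S_2]=(n-2)(3-n)/2<0$ is correct, but as written it is a research program rather than a proof: every step that is actually new for $d\geq 3$ is flagged by you as ``expected'' or a ``bottleneck'' and is not carried out. Concretely, the load-bearing gaps are (i) the moving/transversality lemma generalizing Lemma \ref{nefn-1} and Remark \ref{notnefn-2}, where for Part A you need not merely that a general $(n-2d+2)$-secant plane meets a general $Y$ properly but that $S_d$ is the \emph{unique} irreducible $(2d-1)$-fold pairing negatively with $\delta_d$ --- and for $d\geq 3$ you must separately exclude candidates supported on $E$ (which have no $H$-term and are not covered by the incidence-correspondence argument) and candidates inside cones over $\mathrm{Sec}_{d-1}(C)$; and (ii) the class computations: Proposition \ref{chowsec} only gives $\mathrm{CH}^*(\mathbb{P}(\mathcal{E}_{n,k}))$ for $k=2$, and for general $d$ the multiplicity of $\mathrm{Sec}_d(C)$ along $C$ (your ``leading $E$-coefficient'') is not computed, so the ``single numerical identity'' you reduce to is not actually verified beyond $d=2$.

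One further point you underestimate: the identification $S_2\cong\mathbb{P}(\mathcal{E}_{n,2})$ in Lemma \ref{S2} uses that $\mathrm{Sec}_2(C)$ is singular only along $C$, so blowing up $C$ already resolves it. For $d\geq 3$ the secant variety $\mathrm{Sec}_d(C)$ is singular along $\mathrm{Sec}_{d-1}(C)$, so the proper transform $S_d\subset X_n$ is \emph{not} isomorphic to $\mathbb{P}(\mathcal{E}_{n,d})$; the pushforward $\psi_*[\mathbb{P}(\mathcal{E}_{n,d})]=[S_d]$ still computes the class because $\phi$ is generically injective, but the restriction arguments you want to import from the proof of Theorem \ref{eff2} (computing $\psi_*\mathrm{Eff}$ of the resolution and identifying it with the effective cycles supported on $S_d$) no longer transfer verbatim. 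Your inductive organization via $\mathrm{Sec}_d(C)=\bigcup_x\mathrm{Cone}_x(\mathrm{Sec}_{d-1}(C))$ is a reasonable way to attack this, but until the transversality lemma and the multiplicity computation are supplied, the statement remains, as in the paper, a conjecture.
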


\section{Blow up along other rational curves}

In this section we study the cones of effective cycles in $\mathrm{BL}_{C}(\mathbb{P}^{r})$ for some other curves $C\subset \mathbb{P}^{r}$.

\subsection{Blow up projective spaces along lines}

Let $\pi: W_{r}=\mathrm{BL}_{L}\mathbb{P}^{r} \rightarrow \mathbb{P}^{r}$ where $L\subset \mathbb{P}^{r}$ is a line, and let $v: W_{r} \rightarrow \mathbb{P}^{r-2}$ be the resolution of the projection from $L$ to a complementary $\mathbb{P}^{r-2}$. We have $N_{L/\mathbb{P}^{r}}\cong \mathcal{O}_{\mathbb{P}^{1}}(1)^{\oplus r-1}$ and the exceptional divisor $j: E\hookrightarrow W_{r}$ can be described as $E= \mathbb{P}(N_{L/\mathbb{P}^{r}})\cong \mathbb{P}^{1}\times \mathbb{P}^{r-2}$. Hence $\mathrm{CH}^{*}(E)\cong \mathbb{Z}[h_{1}, h_{2}]/(h_{1}^{2}, h_{2}^{r-1})$, where $h_{1}$ is the hyperplane class of $L$ and $h_{2}$ is the hyperplane class of $\mathbb{P}^{r-2}$. By Proposition \ref{chowblowup}, the Chow ring of $W_{r}$ is computed in the following proposition.

\begin{proposition}\label{chowW}
  We have $-\xi=c_{1}(N_{L/\mathbb{P}^{r}})=-h_{2}+h_{1}\in \mathrm{Pic}(E)$. The Chow ring of $W_{r}$ is
  \[
    \mathrm{CH}^{*}(W_{r})=
    \dfrac{
      \left[ \dfrac{\mathbb{Z}[H]}{(H^{r+1})} \right]
      \oplus
      j_{*}\left[\dfrac{\mathbb{Z}[h_{1}, h_{2}]}{(h_{1}^{2}, h_{2}^{r-1})}\right]
    }
        {\left(
      \begin{array}{c}
        H\cdot j_{*}(\alpha)-j_{*}(\alpha\cdot h_{1}),\\
        j_{*}(\beta)\cdot j_{*}(\gamma)+j_{*}(\xi\cdot \beta \cdot \gamma)
      \end{array}
    \right) },
  \quad \alpha, \beta, \gamma \in \dfrac{\mathbb{Z}[h_{1}, h_{2}]}{(h_{1}^{2}, h_{2}^{2})}.
  \]
\end{proposition}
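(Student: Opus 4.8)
The plan is to apply the blow-up formula of Proposition~\ref{chowblowup} to the inclusion $i\colon L\hookrightarrow\mathbb{P}^{r}$; the argument is word-for-word the one proving Proposition~\ref{chowRNC}, with the rational normal curve replaced by a line and the normal-bundle degree $r+2$ replaced by $1$. The only geometric inputs needed are the normal bundle $N_{L/\mathbb{P}^{r}}$ and the relative hyperplane class $\xi$ on $E$. Restricting the Euler sequence of $\mathbb{P}^{r}$ to $L\cong\mathbb{P}^{1}$ and combining it with the normal-bundle sequence $0\to T_{L}\to T_{\mathbb{P}^{r}}|_{L}\to N_{L/\mathbb{P}^{r}}\to 0$ gives $N_{L/\mathbb{P}^{r}}\cong\mathcal{O}_{\mathbb{P}^{1}}(1)^{\oplus(r-1)}$. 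Writing this as $\mathcal{O}_{\mathbb{P}^{1}}^{\oplus(r-1)}\otimes\mathcal{O}_{\mathbb{P}^{1}}(1)$ and projectivizing, $E=\mathbb{P}(N_{L/\mathbb{P}^{r}})\cong\mathbb{P}(\mathcal{O}_{\mathbb{P}^{1}}^{\oplus(r-1)})=\mathbb{P}^{1}\times\mathbb{P}^{r-2}$, with $h_{1},h_{2}$ the pullbacks of the hyperplane classes of the two factors, exactly as asserted in the set-up preceding the statement.

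Next I would compute $\xi=c_{1}(\mathcal{O}_{\mathbb{P}(N_{L/\mathbb{P}^{r}})}(1))$. Under the canonical isomorphism $\mathbb{P}(\mathcal{O}_{\mathbb{P}^{1}}^{\oplus(r-1)}\otimes\mathcal{O}_{\mathbb{P}^{1}}(1))\cong\mathbb{P}(\mathcal{O}_{\mathbb{P}^{1}}^{\oplus(r-1)})$ over $L$, tensoring the tautological line subbundle by the pullback of $\mathcal{O}_{\mathbb{P}^{1}}(1)$ gives $\mathcal{O}_{\mathbb{P}(N_{L/\mathbb{P}^{r}})}(-1)\cong\mathcal{O}_{\mathbb{P}(\mathcal{O}^{\oplus(r-1)})}(-1)\otimes f^{*}\mathcal{O}_{\mathbb{P}^{1}}(1)$, where I use the paper's convention that $\mathbb{P}(-)$ parametrizes $1$-dimensional subspaces. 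Since $c_{1}(\mathcal{O}_{\mathbb{P}(\mathcal{O}^{\oplus(r-1)})}(1))$ is the hyperplane class $h_{2}$ of the $\mathbb{P}^{r-2}$ factor, this yields $\xi=h_{2}-h_{1}$, hence $c_{1}(N_{E/W_{r}})=-\xi=-h_{2}+h_{1}$, the first equality being the standard fact $N_{E/W_{r}}\cong\mathcal{O}_{E}(-1)$ for a blow-up. As a sanity check, the same computation with $\mathcal{O}_{\mathbb{P}^{1}}(1)$ replaced by $\mathcal{O}_{\mathbb{P}^{1}}(r+2)$ recovers $-\xi=-h_{2}+(r+2)h_{1}$ of Proposition~\ref{chowRNC}.

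Finally I would feed these into Proposition~\ref{chowblowup}. It gives that $\mathrm{CH}^{*}(W_{r})$ is generated as an abelian group by $\pi^{*}\mathrm{CH}^{*}(\mathbb{P}^{r})=\mathbb{Z}[H]/(H^{r+1})$ and $j_{*}\mathrm{CH}^{*}(E)=j_{*}\bigl(\mathbb{Z}[h_{1},h_{2}]/(h_{1}^{2},h_{2}^{r-1})\bigr)$, with multiplication $j_{*}\gamma\cdot\pi^{*}\alpha=j_{*}(\pi|_{E}^{*}i^{*}\alpha\cdot\gamma)$ and $j_{*}\beta\cdot j_{*}\delta=-j_{*}(\beta\delta\xi)$. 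Using $\pi|_{E}^{*}i^{*}H=h_{1}$ (this is the definition of $h_{1}$ as the pullback of the hyperplane class of $L$) and $\xi=h_{2}-h_{1}$, these become exactly the two relations $H\cdot j_{*}(\alpha)=j_{*}(\alpha h_{1})$ and $j_{*}(\beta)j_{*}(\gamma)=-j_{*}(\xi\beta\gamma)$ in the statement. To see that the displayed quotient ring is all of $\mathrm{CH}^{*}(W_{r})$ and not something larger, I would observe that the listed relations genuinely hold in $\mathrm{CH}^{*}(W_{r})$, so the presented ring surjects onto $\mathrm{CH}^{*}(W_{r})$, and then compare graded ranks: the additive part of the blow-up formula gives $\mathrm{CH}^{k}(W_{r})\cong\mathrm{CH}^{k}(\mathbb{P}^{r})\oplus\bigoplus_{j=1}^{r-2}\mathrm{CH}^{k-j}(L)$, and a direct count shows the presented ring has the same rank in each degree, forcing the surjection to be an isomorphism. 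The only step requiring real care is the identification of $\xi$ in the second paragraph: because the paper's $\mathbb{P}(-)$ parametrizes subspaces rather than quotients, one must check that the twist by $f^{*}\mathcal{O}_{\mathbb{P}^{1}}(1)$ enters with the sign producing $\xi=h_{2}-h_{1}$ rather than $h_{2}+h_{1}$; everything else is routine and identical to the rational normal curve case.
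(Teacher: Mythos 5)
Your proof is correct and takes the same route as the paper, which offers no written argument beyond citing Proposition \ref{chowblowup} together with $N_{L/\mathbb{P}^{r}}\cong\mathcal{O}_{\mathbb{P}^{1}}(1)^{\oplus(r-1)}$; the one nontrivial detail, the computation $\xi=h_{2}-h_{1}$ under the subspace convention via the twist $\mathcal{O}_{\mathbb{P}(N)}(-1)\cong\mathcal{O}_{\mathbb{P}(\mathcal{O}^{\oplus(r-1)})}(-1)\otimes f^{*}\mathcal{O}_{\mathbb{P}^{1}}(1)$, you carry out correctly, and your rank-count argument for why the presented ring is not too large is the standard one. You also silently (and correctly) read the statement's $c_{1}(N_{L/\mathbb{P}^{r}})$ as $c_{1}(N_{E/W_{r}})$, which is evidently what is meant by analogy with Proposition \ref{chowRNC}.
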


The cones of effective cycles of $W_{r}$ of all dimensions are generated by linear spaces.

\begin{proposition}\label{effW}
  For every $r\geq 2$ and $1\leq k \leq r-1$, we have
  $$\mathrm{Eff}^{k}(W_{r})=\langle (H-E)^{k}, j_{*}(h_{2}^{k-1}), j_{*}(h_{2}^{k-2}h_{1}) \rangle. $$
\end{proposition}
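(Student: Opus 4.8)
The plan is to produce, for each $k$, a basis of $\mathrm{Num}^{k}(W_r)_{\mathbb R}$ consisting of the three claimed classes together with an explicit \emph{dual} basis of nef classes; then nonnegativity of the coordinates of an arbitrary effective class is automatic. Two inputs should be recorded first. First, $|H-E|$ is the pullback under $v$ of the hyperplane system on $\mathbb P^{r-2}$, so $H-E = v^{*}\mathcal O_{\mathbb P^{r-2}}(1)$ is base-point free, as is $H=\pi^{*}\mathcal O_{\mathbb P^{r}}(1)$; hence $H$, $H-E$, and every monomial in them, are nef, and therefore pair nonnegatively with every effective cycle. Second, by Proposition \ref{chowW} and $\xi = h_2 - h_1$ one has $H\cdot j_{*}(\delta) = j_{*}(h_1\delta)$ and $(H-E)\cdot j_{*}(\delta) = j_{*}(h_2\delta)$ for $\delta \in \mathrm{CH}^{*}(E)$.

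Combining these with $h_1^{2}=0$, $h_2^{r-1}=0$, $j_{*}(h_1h_2^{r-2})=1$ (the point class), and $(H-E)^{r-1}=v^{*}\mathcal O_{\mathbb P^{r-2}}(r-1)=0$ (since $\dim\mathbb P^{r-2}<r-1$), a direct computation shows that the $3\times 3$ matrix of intersection pairings of the nef classes $H^{r-k}$, $H(H-E)^{r-k-1}$, $(H-E)^{r-k}$ against $(H-E)^{k}$, $j_{*}(h_2^{k-1})$, $j_{*}(h_2^{k-2}h_1)$ is the identity. One must handle the degenerate ranges: for $k=1$ the class $j_{*}(h_2^{k-2}h_1)$ is vacuous, for $k=r-1$ the class $(H-E)^{k}$ vanishes, and $\mathrm{Num}^{k}(W_r)$ has rank $2$ rather than $3$ in both cases; there the relevant dual nef pair is $\{H^{r-1},H(H-E)^{r-2}\}$, respectively $\{H,H-E\}$, and the same computation applies.

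Next, the three classes are effective: when $k\le r-2$, $(H-E)^{k}$ is the proper transform of a $\mathbb P^{r-k}$ containing $L$, equivalently $v^{-1}$ of a linear $\mathbb P^{r-k-2}\subseteq\mathbb P^{r-2}$; and $j_{*}(h_2^{k-1})$, $j_{*}(h_2^{k-2}h_1)$ are the pushforwards of the coordinate subvarieties $\mathbb P^{1}\times\mathbb P^{r-1-k}$ and $\{\mathrm{pt}\}\times\mathbb P^{r-k}$ of $E\cong\mathbb P^{1}\times\mathbb P^{r-2}$. They span $\mathrm{Num}^{k}(W_r)_{\mathbb R}$ because, by Proposition \ref{chowW}, $\mathrm{CH}^{k}(W_r)$ is generated as an abelian group by $H^{k}$ together with $j_{*}\mathrm{CH}^{k-1}(E)$, and $(H-E)^{k}$ equals $H^{k}$ modulo the image of $j_{*}$; linear independence then follows from the dual basis of the previous step, so they form a basis. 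For the main step, let $Y\subseteq W_r$ be an arbitrary irreducible subvariety of codimension $k$ and write $[Y]=\alpha(H-E)^{k}+\beta\,j_{*}(h_2^{k-1})+\gamma\,j_{*}(h_2^{k-2}h_1)$. Pairing $[Y]$ with the three nef classes above and using that an effective cycle meets a nef class nonnegatively, the identity-matrix computation gives $\alpha=[Y]\cdot H^{r-k}\ge 0$, $\beta=[Y]\cdot H(H-E)^{r-k-1}\ge 0$, and $\gamma=[Y]\cdot(H-E)^{r-k}\ge 0$. Hence $[Y]\in\langle(H-E)^{k},j_{*}(h_2^{k-1}),j_{*}(h_2^{k-2}h_1)\rangle$; since this holds for every such $Y$ we obtain $\mathrm{Eff}^{k}(W_r)\subseteq\langle\cdots\rangle$, and together with effectivity and linear independence of the generators the cone is simplicial with exactly these extremal rays.

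The only substantive work is the bookkeeping in the second step: verifying the identity pairing matrix from the relations of Proposition \ref{chowW}, and correctly accommodating the boundary cases $k=1$ and $k=r-1$ where one generator (respectively one of the dual nef classes) degenerates and $\dim\mathrm{Num}^{k}(W_r)$ drops to $2$. No geometric case distinction is needed: the nef/effective pairing argument applies verbatim whether or not $Y$ is contained in $E$, so one never has to separately analyze subvarieties of the exceptional divisor.
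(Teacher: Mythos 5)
Your proof is correct and follows essentially the same route as the paper's: exhibit the three classes as effective and pair them against a dual set of nef classes of complementary codimension — indeed your third dual class $H(H-E)^{r-k-1}$ coincides with the paper's $H^{r-k}-j_{*}(h_2^{r-k-2}h_1)$, the proper transform of a $\mathbb{P}^{k}$ meeting $L$ in a point. The only differences are cosmetic: you justify nefness of that class as a product of base-point-free divisor classes rather than citing \cite{FL17b}, and you handle the boundary cases $k=1$ and $k=r-1$ by the same duality computation instead of the paper's separate argument via the contractions $\pi$ and $v$.
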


\begin{proof}
  When $k=1$, $E$ is contracted by $\pi$ and $H-E$ is contracted by $v$, hence they span extremal rays of $\mathrm{Eff}^{1}(W_{r})$. When $k=r-1$, $(H-E)^{r-1}=0$. Since $j_{*}(h_{2}^{r-3}h_{1})$ is contracted by $\pi$ and $j_{*}(h_{2}^{r-2})$ is contracted by $v$, they span extremal rays of $\mathrm{Eff}_{1}(W_{r})$. In the following we assume $2\leq k \leq r-2$.

  The dual class of $\langle j_{*}(h_{2}^{k-1}), j_{*}(h_{2}^{k-2}h_{1}) \rangle$ is $H^{r-k}$. The dual class of $\langle (H-E)^{k}, j_{*}(h_{1})^{k-1} \rangle$ is $(H-E)^{r-k}$. The dual class of $\langle (H-E)^{k}, j_{*}(h_{2}^{k-2}h_{1}) \rangle$ is $H^{r-k}-j_{*}(h_{2}^{r-k-2}h_{1})$. The first two are nef by Kleiman's Transversality Theorem. The third is the class of the proper transform of a $\mathbb{P}^{k}$ that meets $L$ at a point. It is base point free in the sense of \cite{FL17b}, hence nef. Since $(H-E)^{k}, j_{*}(h_{2}^{k-1}), j_{*}(h_{2}^{k-2}h_{1})$ are all effective cycles, they are the extremal rays of $\mathrm{Eff}^{k}(W_{r})$.

\end{proof}

\subsection{Blow up $\mathbb{P}^{3}$ along rational curves on smooth quadric surfaces}

Let $\pi: Y_{d}=\mathrm{BL}_{C_{d}}\mathbb{P}^{3} \rightarrow \mathbb{P}^{3}$ where $C_{d}$ is a smooth curve on a smooth quadric surface of class $(1,d-1)$. Then $C\cong \mathbb{P}^{1}$ and $\mathrm{deg}(C)=d$. By \cite{EV81}, $N_{C_{d}/\mathbb{P}^{3}}\cong \mathcal{O}_{\mathbb{P}^{1}}(2d-1)^{\oplus 2}$ when $d\geq 3$. The exceptional divisor $j:E \hookrightarrow Y_{d}$ can be described as $E=\mathbb{P}(E_{C_{d}/ \mathbb{P}^{3}})\cong \mathbb{P}^{1}\times \mathbb{P}^{1}$. We have $\mathrm{CH}^{*}(E)=\mathbb{Z}[h_{1}, h_{2}]/(h_{1}^{2}, h_{2}^{2})$, where $h_{1}$ is the fiber of $u$ and $h_{2}$ is the fiber of the other contraction. By Proposition \ref{chowblowup}, the Chow ring of $Y_{d}$ is as follows.

\begin{proposition}\label{chowY}
When $d\geq 3$, we have $\xi=c_{1}(N_{E/ Y_{d}})= -h_{2}+(2d-1)h_{1}\in \mathrm{Pic}(E)$. The Chow ring of $Y_{d}$ is
  \[
    \mathrm{CH}^{*}(Y_{d})=
    \dfrac{
      \left[\dfrac{\mathbb{Z}[H]}{(H^{4})}\right]\oplus j_{*}\left[\dfrac{\mathbb{Z}[h_{1}, h_{2}]}{(h_{1}^{2}, h_{2}^{2})}\right]
    }
    {\left(
      \begin{array}{c}
        H\cdot j_{*}(\alpha)-j_{*}(\alpha\cdot dh_{1}),\\
        j_{*}(\beta)\cdot j_{*}(\gamma)+j_{*}(\xi\cdot \beta \cdot \gamma)
     \end{array}
    \right) },
  \quad \alpha, \beta, \gamma \in \dfrac{\mathbb{Z}[h_{1}, h_{2}]}{(h_{1}^{2}, h_{2}^{2})}.
  \]
\end{proposition}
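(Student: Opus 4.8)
The plan is to apply the blow-up formula of Proposition \ref{chowblowup} to $\pi : Y_d \to \mathbb{P}^3$, in exact analogy with the computations of $\mathrm{CH}^*(X_r)$ in Proposition \ref{chowRNC} and of $\mathrm{CH}^*(W_r)$ in Proposition \ref{chowW}. Three inputs are needed: the Chow ring of the base, which is just $\mathbb{Z}[H]/(H^4)$; a presentation of $\mathrm{CH}^*(E)$ together with the class $\xi = c_1(\mathcal{O}_\pi(1)) \in \mathrm{CH}^1(E)$; and the pullback $(\pi|_E)^* i^* H \in \mathrm{CH}^1(E)$, where $i : C_d \hookrightarrow \mathbb{P}^3$ is the inclusion. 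Once these are in hand, the two multiplicative relations of Proposition \ref{chowblowup} — namely $H \cdot j_*(\alpha) = j_*\big((\pi|_E)^* i^* H \cdot \alpha\big)$ and $j_*(\beta)\, j_*(\gamma) = -\,j_*(\xi \beta \gamma)$ — together with the identity $-\xi = c_1(N_{E/Y_d})$ give precisely the displayed quotient ring.

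First I would invoke \cite{EV81}: for $d \geq 3$ one has $N_{C_d/\mathbb{P}^3} \cong \mathcal{O}_{\mathbb{P}^1}(2d-1)^{\oplus 2}$, where $C_d \cong \mathbb{P}^1$ by adjunction on the quadric (the class $(1,d-1)$ has arithmetic genus $0$). Since projectivizing is unaffected by twisting by a line bundle, $E = \mathbb{P}(N_{C_d/\mathbb{P}^3}) \cong \mathbb{P}(\mathcal{O}_{\mathbb{P}^1}^{\oplus 2}) \cong \mathbb{P}^1 \times \mathbb{P}^1$, with $h_1$ the class of a fiber of $\pi|_E : E \to C_d$ and $h_2$ the class of the complementary ruling, so $\mathrm{CH}^*(E) \cong \mathbb{Z}[h_1,h_2]/(h_1^2, h_2^2)$. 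The relative $\mathcal{O}(1)$ absorbs the twist: writing $N_{C_d/\mathbb{P}^3} = \mathcal{O}_{\mathbb{P}^1}(2d-1) \otimes \mathcal{O}_{\mathbb{P}^1}^{\oplus 2}$ and using the subspace convention of this paper, one finds $\mathcal{O}_\pi(1) \cong \mathcal{O}_{\mathbb{P}(\mathcal{O}^{\oplus 2})}(1) \otimes (\pi|_E)^* \mathcal{O}_{\mathbb{P}^1}(-(2d-1))$, hence $\xi = h_2 - (2d-1)h_1$. The standard self-intersection identity $N_{E/Y_d} = \mathcal{O}_\pi(-1)$, equivalently $-\xi = c_1(N_{E/Y_d})$, then yields $-\xi = -h_2 + (2d-1)h_1$. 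I would sanity-check the sign against the identical recipe used in Proposition \ref{chowRNC} ($\mathcal{O}_{\mathbb{P}^1}(r+2)^{\oplus(r-1)}$ giving $-\xi = -h_2 + (r+2)h_1$) and Proposition \ref{chowW} ($\mathcal{O}_{\mathbb{P}^1}(1)^{\oplus(r-1)}$ giving $-\xi = -h_2 + h_1$).

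For the remaining input, $i^* H = \mathcal{O}_{C_d}(1)$ has degree $\deg C_d = d$, since the hyperplane class of the quadric surface is $(1,1)$ and $(1,d-1)\cdot(1,1) = d$; thus $i^* H = d\,[\mathrm{pt}]$ on $C_d \cong \mathbb{P}^1$ and $(\pi|_E)^* i^* H = d\,h_1$. Feeding this into Proposition \ref{chowblowup} gives the relation $H \cdot j_*(\alpha) = j_*(d\,h_1 \cdot \alpha)$, and assembling it with $j_*(\beta)\,j_*(\gamma) + j_*(\xi \beta \gamma) = 0$ and the presentations of $\mathrm{CH}^*(\mathbb{P}^3)$ and $\mathrm{CH}^*(E)$ reproduces exactly the stated quotient.

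I do not anticipate a serious obstacle here: the argument is a mechanical application of the blow-up formula once the normal bundle is known, and is formally parallel to Propositions \ref{chowRNC} and \ref{chowW}. The only points demanding care are (a) quoting the correct shape of $N_{C_d/\mathbb{P}^3}$ from \cite{EV81}, which is exactly where the hypothesis $d \geq 3$ is used, and (b) keeping the twist and sign straight in the computation of $\xi$, for which agreement with the two earlier propositions is a reliable check.
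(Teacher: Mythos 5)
Your argument is correct and is exactly the route the paper takes: the proposition is presented as an immediate application of Proposition \ref{chowblowup}, using $N_{C_d/\mathbb{P}^3}\cong\mathcal{O}_{\mathbb{P}^1}(2d-1)^{\oplus 2}$ from \cite{EV81} to get $E\cong\mathbb{P}^1\times\mathbb{P}^1$, $(\pi|_E)^*i^*H=d\,h_1$, and $\xi=h_2-(2d-1)h_1$. Your sign convention $-\xi=c_1(N_{E/Y_d})$ is the consistent one (it matches Propositions \ref{chowRNC} and \ref{chowW} and the relation $j_*(\beta)j_*(\gamma)=-j_*(\xi\beta\gamma)$); the statement's ``$\xi=c_1(N_{E/Y_d})$'' appears to be a sign typo that you have implicitly corrected.
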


For $d=1$, $C_{d}$ is a line, $Y_{1}$ is studied in Proposition \ref{effW} The next case is $d=2$.

\begin{proposition}\label{effconic}
  Let $h_{1}\in \mathrm{Pic}(E)$ be the fiber class of $\pi$. We have
  \begin{eqnarray*}
    \mathrm{Eff}^{1}(Y_{2})&=&\langle E, H-E \rangle, \\
    \mathrm{Eff}_{1}(Y_{2})&=& \langle j_{*}(h_{1}), H^{2}-2j_{*}(h_{1}) \rangle.
    \end{eqnarray*}
\end{proposition}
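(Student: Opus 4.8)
The plan is to treat $Y_2$ by hand, since $C=C_2$ is a smooth plane conic and its normal bundle does not fit the pattern of Proposition \ref{chowY} (which requires $d\geq 3$). Writing $C\subset P\cong\mathbb{P}^2\subset\mathbb{P}^3$ and using the normal bundle sequence $0\to N_{C/P}\to N_{C/\mathbb{P}^3}\to N_{P/\mathbb{P}^3}|_C\to 0$ with $N_{C/P}\cong\mathcal{O}_{\mathbb{P}^1}(4)$ and $N_{P/\mathbb{P}^3}|_C\cong\mathcal{O}_{\mathbb{P}^1}(2)$, the sequence splits since $\mathrm{Ext}^1(\mathcal{O}(2),\mathcal{O}(4))=H^1(\mathbb{P}^1,\mathcal{O}(2))=0$; hence $N_{C/\mathbb{P}^3}\cong\mathcal{O}(2)\oplus\mathcal{O}(4)$ and $E\cong\mathbb{F}_2$ rather than $\mathbb{P}^1\times\mathbb{P}^1$. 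First I would record the intersection numbers needed, all from Proposition \ref{chowblowup}: $H^3=1$, $H\cdot j_*(h_1)=0$ (the fiber $h_1$ is $\pi$-contracted), $E\cdot H^2=0$, and, using $j_*(1)\cdot j_*(h_1)=-j_*(\xi\cdot h_1)$ together with $\xi\cdot h_1=1$ on a fiber, $E\cdot j_*(h_1)=-1$. I would also note that $\mathrm{Sec}_2(C)=P$, so its proper transform is $H-E$ (the plane contains $C$ with multiplicity one).

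For the divisor cone I would argue dually, using covering families of curves. Both generators are effective: $E$ is the exceptional divisor and $H-E$ is the proper transform $\widetilde P$ of the plane $P\supset C$. A general line disjoint from $C$ has proper transform of class $H^2$, and a general line meeting $C$ transversally in one point has proper transform of class $H^2-j_*(h_1)$; both families cover $Y_2$, so a general member avoids any fixed effective divisor, and hence $H^2$ and $H^2-j_*(h_1)$ pair non-negatively with every effective divisor class. Writing $D=aH+cE$ and computing $D\cdot H^2=a$ and $D\cdot(H^2-j_*(h_1))=a+c$, I obtain $\mathrm{Eff}^1(Y_2)\subseteq\{a\geq 0,\ a+c\geq 0\}=\langle E, H-E\rangle$; since both generators are effective, equality holds.

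For the cone of curves I would argue dually with nef divisors. The class $j_*(h_1)$ is $\pi$-contracted, hence effective and extremal; the proper transform $\widetilde\ell$ of a secant line of $C$ is effective, and since a secant line meets the conic in \emph{two} points its class is $H^2-2j_*(h_1)$. This is the step where care is needed: the coefficient is $2$, the number of intersection points of a $2$-secant line with the conic, and not the value $d-1=1$ that appears for the non-degenerate curves with $d\geq 3$. For the reverse inclusion I would exhibit two nef divisors: $H=\pi^*\mathcal{O}(1)$ is nef, and $2H-E$ is nef because $C$ is $2$-regular, so $I_C(2)$ is globally generated and $|2H-E|$ is base point free on $Y_2$. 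Pairing $\gamma=xH^2+y\,j_*(h_1)$ against these gives $H\cdot\gamma=x$ and $(2H-E)\cdot\gamma=2x+y$, so $\mathrm{Eff}_1(Y_2)\subseteq\{x\geq 0,\ 2x+y\geq 0\}=\langle j_*(h_1), H^2-2j_*(h_1)\rangle$, with equality since both generators are effective. As a consistency check, $(2H-E)\cdot(H^2-2j_*(h_1))=0$ while $(2H-E)\cdot j_*(h_1)=1$, confirming that $2H-E$ supports the secant ray and separates it from the fiber ray.

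I expect the main obstacle to be the nefness of $2H-E$. The argument used in Proposition \ref{dim1} — that the ideal of $C$ is generated by quadrics — is not available here, since the ideal of a plane conic requires a linear generator. The remedy is to observe that the quadrics through $C$ nonetheless generate $I_C$ locally along $C$ (a linear form times a coordinate recovers it), equivalently that $C\subset\mathbb{P}^3$ is $2$-regular, so that $I_C(2)$ is globally generated; this yields base point freeness of $|2H-E|$ after the blow up, hence nefness. The only other point requiring attention is the bookkeeping forced by $E\cong\mathbb{F}_2$, but since every intersection number I use reduces to the relations of Proposition \ref{chowblowup} and to $\xi\cdot h_1=1$ on a fiber, the Hirzebruch splitting type never enters the final computation.
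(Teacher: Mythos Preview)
Your proof is correct and follows essentially the same strategy as the paper: bound $\mathrm{Eff}^1$ by pairing against the covering curve class $H^2-j_*(h_1)$, and bound $\mathrm{Eff}_1$ by the nefness of $2H-E$, then observe the extremal generators are effective. The paper simply asserts that ``the ideal of $C_2$ is generated by quadrics'' to obtain nefness of $2H-E$; your observation that the homogeneous ideal needs the linear form $x_3$ but that $\mathcal{I}_C(2)$ is nonetheless globally generated (the quadrics $x_3x_i$ together with the conic equation cut out $C$ scheme-theoretically) is the precise statement behind that assertion, so your extra care there is justified rather than a genuine divergence. Your side remarks on $E\cong\mathbb{F}_2$ are correct and, as you note yourself, irrelevant to the intersection numbers actually used.
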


\begin{proof}
  Since $E$ is contracted by $\pi$, it spans an extremal ray of $\mathrm{Eff}^{1}(Y_{2})$. To show $H-E$ spans an extremal ray of $\mathrm{Eff}^{1}(Y_{2})$, we need to show its dual cycle $H^{2}-j_{*}(h_{1})$ is nef. Note that the proper transform of a line $L$ that meets $C_{2}$ in a point has class $H^{2}-j_{*}(h_{1})$. By moving $L$, we see that such proper transforms cover $Y_{2}$. Hence $H^{2}-j_{*}(h_{1})$ is nef.

An extremal ray of $\mathrm{Eff}_{1}(Y_{2})$ is spanned by $j_{*}(h_{1})$ since it is contracted by $\pi$. Note that $2H-E$ is nef since the ideal of $C_{2}$ is generated by quadrics, and its dual class $H^{2}-2j_{*}(h_{1})$ is the class of a 2-secant line of $C$. Hence $H^{2}-2j_{*}(h_{1})$ spans another extremal ray of $\mathrm{Eff}_{1}(Y_{2})$. 
\end{proof}

In the following we assume $d\geq 3$. The cone of effective divisors is computed in the following proposition.
   
\begin{proposition}\label{effY}
  For $d\geq 3$, we have
       $$\mathrm{Eff}^{1}(Y_{d})=\langle E, 2H-E \rangle .$$
   \end{proposition}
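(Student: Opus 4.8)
The plan is to show that $E$ and $2H-E$ each span an extremal ray of $\mathrm{Eff}^1(Y_d)$, and that these are the only extremal rays, using the fact that $\mathrm{Pic}(Y_d) = \mathrm{Num}^1(Y_d) = \mathbb{Z}H \oplus \mathbb{Z}E$ has rank $2$, so it suffices to identify the two boundary rays of the cone. Since $E$ is contracted by $\pi$, it automatically spans an extremal ray; this is the easy half.

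For the other ray, I would argue that $2H-E$ is effective — the quadric surface $Q \subseteq \mathbb{P}^3$ containing $C_d$ has class $2H$ in $\mathbb{P}^3$ and contains $C_d$ with multiplicity one, so its proper transform has class $2H-E$ — and then show no effective divisor can have class $aH-bE$ with $b/a > 1/2$. The natural way to do this is to produce a nef class on the opposite side: I would show that the dual curve class, which should be $H^3 - d\,j_*(h_1)$ (the proper transform of a $d$-secant line of $C_d$, i.e., a line lying on $Q$ and meeting $C_d$ in $d$ points, equivalently a ruling line of the correct family on $Q$), is a nef $1$-cycle and pairs to zero with $2H-E$. Concretely: a ruling line of $Q$ in the family of class $(1,0)$ meets $C_d$ (of class $(1,d-1)$) in $d-1$ points... here I need to be careful about which ruling and recompute, since $C_d$ has class $(1,d-1)$ so a line of class $(0,1)$ meets it in $1$ point and a line of class $(1,0)$ meets it in $d-1$ points; the relevant $d$-secant geometry comes from the next case. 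Rather, I would instead verify nefness of $2H-E$ directly by showing the quadrics through $C_d$ are base-point-free on the blow up, i.e. the linear system $|2H-E|$ has no base points, which holds because the homogeneous ideal of $C_d \subseteq \mathbb{P}^3$ is generated in degree $2$ (as $C_d$ lies on the unique quadric $Q$, its ideal is $(Q, F)$ for a form $F$ of appropriate degree — actually for $d \geq 3$ the ideal requires a cubic generator, so I should instead simply use that $|2H - E|$ separates points of $Y_d \setminus E$ generically and restricts to a base-point-free system on $E$). Following the pattern of Proposition \ref{effW}, I would check that $2H-E$ is base point free in the sense of \cite{FL17b}, hence nef, and since it pairs negatively with $E$ and $E$ is the only effective divisor it pairs negatively with, the ray $\langle 2H-E \rangle$ is extremal on the boundary of $\mathrm{Eff}^1(Y_d)$ opposite to $E$.

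The final step is to conclude: $\mathrm{Eff}^1(Y_d)$ is a $2$-dimensional closed convex cone, $E$ spans one extremal ray, and any effective divisor $D = aH - bE$ pairs nonnegatively with the nef class dual to $2H-E$, forcing $D$ to lie in the cone $\langle E, 2H-E\rangle$; conversely both generators are effective, so the cone equals $\langle E, 2H-E\rangle$.

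I expect the main obstacle to be the nefness of $2H-E$ (or equivalently the effectivity/nefness of the dual $1$-cycle): for $d \geq 3$ the curve $C_d$ is not cut out by quadrics scheme-theoretically, so I cannot simply invoke "ideal generated by quadrics" as in the $d=2$ case of Proposition \ref{effconic}. Instead I would run a moving-lemma / covering-family argument — exhibiting an explicit covering family of curves on which $2H-E$ has nonnegative degree, most naturally the proper transforms of the lines of $Q$ in the ruling meeting $C_d$ in the fewest points, or the proper transforms of conics residual to $C_d$ on $Q$ — and verify the pairing computation using the Chow ring presentation of Proposition \ref{chowY}.
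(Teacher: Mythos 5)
Your skeleton matches the paper's: $E$ is extremal because $\pi$ contracts it, $2H-E$ is effective as the proper transform of the quadric $Q$, and since $\mathrm{Num}^1(Y_d)$ has rank $2$ everything reduces to producing a curve class that pairs nonnegatively with every effective divisor and vanishes on $2H-E$. But the class you end up proposing for this role is wrong, and that is the crux of the proof. First, you do not need (and for $d\geq 4$ cannot have) nefness of the divisor $2H-E$ itself: the system $|2H-E|$ then consists of the single divisor $\widetilde{Q}$, and $2H-E$ pairs negatively with the proper transform $H^{2}-(d-1)j_{*}(h_{1})$ of a $(1,0)$-ruling line of $Q$ (the pairing is $3-d$), so no base-point-freeness argument can succeed; extremality of $2H-E$ in $\mathrm{Eff}^1$ only requires a \emph{curve} class dual to it that is nonnegative on all effective divisors. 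Second, the covering families you finally propose --- ruling lines of $Q$ and conics on $Q$ --- sweep out only $Q$, not $Y_d$, so a general effective divisor could still contain none of the geometry they see; worse, their $(2H-E)$-degrees are $3-d$ and $4-d$, which are negative in the range of interest, so they bound the cone from the wrong side.

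The class that works, and the one the paper uses, is the proper transform of a general $2$-secant line of $C_d$, with class $H^{2}-2j_{*}(h_{1})$. It pairs to zero with $2H-E$ (degree $1$, two intersection points with $C_d$), and it pairs nonnegatively with every effective divisor because $C_d$ is non-degenerate for $d\geq 3$, hence $\mathrm{Sec}_{2}(C_d)=\mathbb{P}^{3}$ and the $2$-secant lines form a covering family of $Y_d$; a general member of a covering family is not contained in any fixed effective divisor. Note that a general $2$-secant line is \emph{not} contained in $Q$ (only pairs of points on a common ruling give lines inside $Q$), which is exactly why this family covers all of $\mathbb{P}^3$ while yours does not. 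With this substitution your argument closes; without it the bound $b/a\leq 1/2$ for effective classes $aH-bE$ is never established.
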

   
   \begin{proof}
   Since $E$ is contracted by $\pi$, it spans an extremal ray. The proper transform of any quadric containing $C_{d}$ has class $2H-E$, hence $2H-E$ is effective. The dual class of $2H-E$ is $H^{2}-2j_{*}(h_{1})$, which is the class of a 2-secant line of $C_{d}$. Since $C_{d}$ is non-degenerate for $d\geq 3$, $\mathrm{Sec}_{2}(C)=\mathbb{P}^{3}$. Hence $H^{2}-2j_{*}(h_{1})$ is nef.
   \end{proof}
   
  Next we compute the cone of effective 1-dimensional cycles. We first make an observation. 
  \begin{lemma}\label{min}
    Let $S\subset \mathbb{P}^{3}$ be an irreducible surface containing $C_{d}$. If $Q\neq S$, then $\mathrm{deg}(S)\geq d-1$. 
  \end{lemma}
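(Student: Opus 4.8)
The plan is to restrict $S$ to the quadric surface and read off the constraint from the bidegree of the residual curve. Write $e=\deg S$. Since $S$ is irreducible with $S\neq Q$ and $Q$ is an irreducible quadric, $S$ does not contain $Q$, so the scheme-theoretic intersection $S\cap Q$ is an effective Cartier divisor on $Q$, cut out by a nonzero section of $\mathcal{O}_Q(e)$. Under the identification $Q\cong\mathbb{P}^1\times\mathbb{P}^1$ the class of $\mathcal{O}_Q(1)$ is $(1,1)$, so $[S\cap Q]=(e,e)$ in $\mathrm{Pic}(Q)\cong\mathbb{Z}^2$.

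Next I would use the hypothesis $C_d\subseteq S$. Together with $C_d\subseteq Q$ this gives a scheme-theoretic inclusion of closed subschemes $C_d\hookrightarrow S\cap Q$ inside the smooth surface $Q$. Both $C_d$ and $S\cap Q$ are effective Cartier divisors on $Q$, so this inclusion means that, locally, the equation defining $S\cap Q$ is divisible by the equation defining $C_d$; hence $R:=(S\cap Q)-C_d$ is an effective divisor on $Q$. Since $C_d$ has bidegree $(1,d-1)$, the class of $R$ is $(e-1,\,e-(d-1))$.

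Finally I would invoke the elementary fact that an effective divisor on $\mathbb{P}^1\times\mathbb{P}^1$ has both bidegree entries nonnegative (it is the zero locus of a nonzero bihomogeneous polynomial, which forces both bidegrees $\geq 0$). Applying this to $R$ gives $e-(d-1)\geq 0$, that is $\deg S=e\geq d-1$, which is the claim.

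The only real subtlety is the passage from a scheme-theoretic containment of divisors to an inequality of their classes; I expect this to be the step needing the most care, though it is purely local on the smooth surface $Q$ and standard. One should also note that the same argument yields the stronger bound $e\geq 2(d-1)$ when $S$ is everywhere tangent to $Q$ along $C_d$ (so that $2C_d\leq S\cap Q$ as divisors); the stated bound only needs the weaker containment $C_d\leq S\cap Q$.
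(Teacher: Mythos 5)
Your proposal is correct and follows exactly the paper's argument: the paper likewise observes that $S\cap Q$ has bidegree $(n,n)$ on $Q\cong\mathbb{P}^1\times\mathbb{P}^1$, contains $C_d$ of bidegree $(1,d-1)$, and concludes $n\geq d-1$ from effectivity of the residual. Your write-up simply supplies the details (divisibility of local equations, nonnegativity of bidegrees of effective divisors) that the paper leaves implicit.
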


  \begin{proof}
Let $S$ be such a surface with $\mathrm{deg}(S)=n$. Note that $C_{d}\subset S\cap Q= (n,n)$ in $Q$, we have $n\geq d-1$, 
  \end{proof}

  \begin{proposition}\label{eff1Y}
    For $d\geq 3$, we have
      $$\mathrm{Eff}_{1}(Y_{d})=\langle j_{*}(h_{1}), H^{2}-(d-1)j_{*}(h_{1})\rangle. $$
    \end{proposition}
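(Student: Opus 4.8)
The plan is to identify the two extremal rays separately. The ray spanned by $j_*(h_1)$ is immediate: $h_1$ is the fiber class of $\pi|_E$, so $j_*(h_1)$ is contracted by $\pi$ and hence spans an extremal ray of $\mathrm{Eff}_1(Y_d)$. The bulk of the work is to show that $H^2 - (d-1)j_*(h_1)$ spans the other extremal ray. Since $\mathrm{Num}_1(Y_d)$ has rank $2$ (the classes $H^2$ and $j_*(h_1)$ form a basis, using the relation coming from Proposition \ref{chowY} analogous to Corollary \ref{relation}), it suffices to (a) exhibit an effective curve in this class, and (b) show no effective curve class $H^2 - \lambda j_*(h_1)$ with $\lambda > d-1$ exists, equivalently that the dual divisor class is nef.

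For (a): a $(d-1)$-secant line of $C_d$ has proper transform of class $H^2 - (d-1)j_*(h_1)$, so I first need to confirm $C_d$ genuinely has $(d-1)$-secant lines. Because $C_d$ lies on the smooth quadric $Q$ with class $(1,d-1)$, a line of the ruling $(0,1)$ meets $C_d$ in $C_d \cdot (0,1) = (1,d-1)\cdot(0,1) = 1$ point — not enough; but a line of the other ruling $(1,0)$ meets $C_d$ in $(1,d-1)\cdot(1,0) = d-1$ points. So the rulings of $Q$ in the $(1,0)$-direction are exactly $(d-1)$-secant lines of $C_d$, and their proper transforms give the effective class $H^2 - (d-1)j_*(h_1)$ (the coefficient of $j_*(h_1)$ being the intersection length with $C_d$).

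For (b): I need the dual divisor to $H^2 - (d-1)j_*(h_1)$ — call it $D = \alpha H - \beta E$ for suitable $\alpha,\beta$ determined by the intersection pairing in Proposition \ref{chowY} — to be nef. The natural candidate is the class of the proper transform of the quadric $Q$ itself, which by Proposition \ref{effY} is $2H - E$: one checks via the Chow ring that $(2H-E)\cdot(H^2 - (d-1)j_*(h_1)) = 0$, so $2H-E$ is the relevant dual. Thus I must show $2H-E$ is nef on $Y_d$. The idea is: the linear system $|2H-E|$ is the system of quadrics through $C_d$; for any irreducible curve $\gamma \subset Y_d$, either $\pi(\gamma) \subseteq Q$ — and then $\gamma$ lies on the proper transform $\widetilde Q$ of $Q$, where I can compute intersection numbers directly on $\widetilde Q \cong Q$ blown up at the $\mathrm{deg}(C_d \cdot C_d)_Q$-type data and check positivity using Lemma \ref{min} — or $\pi(\gamma) \not\subseteq Q$, in which case some quadric through $C_d$ meets $\pi(\gamma)$ properly (quadrics through $C_d$ have base locus exactly $Q$, since $C_d$ is cut out on $Q$ by... no — the base locus of quadrics through $C_d$ is $Q$ when $d \geq 3$ because $C_d$ imposes independent conditions and the quadrics through it are $\{Q\} \cup$ reducible ones through $C_d$; more carefully, any quadric $\neq Q$ containing $C_d$ has degree $2$ and by Lemma \ref{min} with $n=2 \geq d-1$ forces $d \leq 3$), so I should treat $d=3$ and $d \geq 4$ separately: for $d \geq 4$ the only quadric through $C_d$ is $Q$ itself, so $\widetilde Q$ is the unique member of $|2H-E|$ and I instead argue $2H-E$ is nef because it equals $\pi^*(\mathcal{O}(2)) - E$ and a direct check on curves not in $\widetilde Q$ gives $(2H-E)\cdot\gamma = 2\deg\pi(\gamma) - (\text{mult of }\gamma\text{ along }E) \geq 0$ since the multiplicity is bounded by the intersection of $\pi(\gamma)$ with $C_d$, which is at most $2\deg\pi(\gamma)$ as $C_d$ has degree $d$...

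The main obstacle I expect is precisely establishing nefness of $2H-E$ for all $d \geq 3$: the linear system $|2H-E|$ may be too small (indeed for $d \geq 4$ it is a single divisor) to apply a base-point-free or covering-family argument in the style of Proposition \ref{effY}'s proof, so I will need to bound, for an arbitrary irreducible curve $\gamma$ not contained in $\widetilde Q$, the multiplicity $\mathrm{mult}_{C_d}\pi(\gamma)$ in terms of $\deg\pi(\gamma)$ — using that $C_d$ is a smooth rational curve of degree $d$ lying on $Q$ — and then handle curves inside $\widetilde Q$ by working in the surface $\widetilde Q$ directly, where $\widetilde Q$ is isomorphic to $Q \cong \mathbb{P}^1 \times \mathbb{P}^1$ (the blow-up map $\widetilde Q \to Q$ is an isomorphism since $C_d \subset Q$ is a divisor, not a curve being blown up inside a threefold... wait — $Y_d = \mathrm{Bl}_{C_d}\mathbb{P}^3$, and $C_d$ has codimension $1$ in $Q$ but codimension $2$ in $\mathbb{P}^3$, so $\widetilde Q \to Q$ is the blow-up of the surface $Q$ along the curve $C_d$, which is again an isomorphism). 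On $\widetilde Q \cong Q$, the restriction of $H$ is the hyperplane class $(1,1)$ and the restriction of $E$ is $C_d = (1,d-1)$, so $(2H-E)|_{\widetilde Q} = (2,2)-(1,d-1) = (1, 3-d)$, which is nef on $\mathbb{P}^1\times\mathbb{P}^1$ only when $d \leq 3$ — so for $d \geq 4$ I must be more careful and note that $\widetilde Q$ is not the blow-up of a surface but genuinely a divisor in the threefold $Y_d$ meeting $E$ in a curve, hence $\widetilde Q \to Q$ contracts nothing and $(2H-E)|_{\widetilde Q}$ must be recomputed as $(2H-E)\cdot \widetilde Q$ using the full three-dimensional intersection theory of Proposition \ref{chowY} rather than a naive surface restriction. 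Reconciling this is the crux: I will compute all intersection numbers of $2H-E$ against the generators of $\mathrm{Num}_1(Y_d)$ via Proposition \ref{chowY}, show both are $\geq 0$, and conclude $2H-E$ lies in the closure of the ample cone (or is at least nef) because $\mathrm{Num}_1$ is two-dimensional and both extremal effective curve classes pair non-negatively with it.
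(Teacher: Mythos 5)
Your identification of the first extremal ray and of the effective class $H^{2}-(d-1)j_{*}(h_{1})$ (via the $(1,0)$-ruling of $Q$, whose lines are $(d-1)$-secant to $C_{d}$) is correct and matches the paper. But the crux of your part (b) rests on a wrong computation: the divisor class dual to $H^{2}-(d-1)j_{*}(h_{1})$ is \emph{not} $2H-E$. For the proper transform $\widetilde{L}$ of a line meeting $C_{d}$ transversally in $k$ points one has $(aH-bE)\cdot[\widetilde{L}]=a-bk$, so $(2H-E)\cdot(H^{2}-(d-1)j_{*}(h_{1}))=2-(d-1)=3-d$, which vanishes only for $d=3$ and is negative for $d\geq 4$. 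In particular $2H-E$ is not nef for $d\geq 4$: it pairs negatively with the very ruling curves you produced in part (a). Your own restriction computation $(2H-E)|_{\widetilde{Q}}=(1,3-d)$ was already correct, since $\widetilde{Q}\to Q$ is an isomorphism and $E|_{\widetilde{Q}}=C_{d}=(1,d-1)$; no recomputation of $(2H-E)\cdot\widetilde{Q}$ with threefold intersection theory can change that sign. ($2H-E$ spans an extremal ray of $\mathrm{Eff}^{1}(Y_{d})$, as in Proposition \ref{effY}, precisely because it is rigid and negative on curves it contains --- that is the opposite of being nef.)

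The correct dual class is $(d-1)H-E$, and proving that it is nef is the actual content of the proposition; this is where the paper spends all its effort. One must (i) produce an \emph{irreducible} surface $S$ of degree $d-1$ containing $C_{d}$ --- the paper does this by the cohomology count $h^{0}(I_{C_{d}}((d-1)H))-h^{0}(\mathcal{O}_{\mathbb{P}^{3}}((d-1)H-Q))\geq d-1$ combined with Lemma \ref{min}, which forbids irreducible surfaces through $C_{d}$ of degree between $2$ and $d-2$; (ii) show, by varying the residual curve $R$ in $Q\cap S=C_{d}\cup R$ over all of $\mathrm{H}^{0}(Q,\mathcal{O}_{Q}(0,d-2))$, that the base locus of $|(d-1)H-E|$ is contained in $E$; and (iii) remove the base locus over each point $p\in C_{d}$ by exhibiting a second member $Q\cup Z$ and checking $T_{p}Q\neq T_{p}S$. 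None of this appears in your proposal, so as written the argument does not establish the second extremal ray for any $d\geq 4$.
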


    \begin{proof}
      Since $j_{*}(h_{1})$ is contracted by $\pi$, it spans an extremal ray. We need to show that the dual class of $H^{2}-(d-1)j_{*}(h_{1})$, which is $(d-1)H-E$, is nef. Consider the following exact squence on $\mathbb{P}^{3}$:
      $$0 \longrightarrow I_{C_{d}}((d-1)H) \longrightarrow \mathcal{O}_{\mathbb{P}^{3}}((d-1)H) \longrightarrow \mathcal{O}_{C_{d}}((d-1)H) \longrightarrow 0. $$
      Since $\mathrm{h}^{0}(\mathcal{O}_{\mathbb{P}^{3}}((d-1)H))=\binom{d+2}{3}$ and $\mathrm{h}^{0}(\mathcal{O}_{C}((d-1)H))=d^{2}-d+1$, we have
      $$\mathrm{h}^{0}(\mathbb{P}^{3}, I_{C_{d}}((d-1)H))-\mathrm{h}^{0}(\mathcal{O}_{\mathbb{P}^{3}}((d-1)H-Q))\geq d-1. $$
 By lemma \ref{min}, there exists an irreducible surface $S\in  \mathrm{H}^{0}(I_{C_{d}}((d-1)H))$. 

 Since $Q\cap S=(d-1, d-1)\subset Q$, we may write $Q\cap S=C_{d}\cup R$, where $R\in  \mathrm{H}^{0}(Q, \mathcal{O}_{Q}(0, d-2))$. Note that the map $ \mathrm{H}^{0}(\mathbb{P}^{3},\mathcal{O}_{\mathbb{P}^{3}}((d-1)H)) \twoheadrightarrow  \mathrm{H}^{0}(Q, \mathcal{O}_{Q}(d-1, d-1))$ is surjective implies that the map
 $$ \mathrm{H}^{0}(\mathbb{P}^{3}, I_{C_{d}}((d-1)H)) \longrightarrow  \mathrm{H}^{0}(Q, I_{C_{d}}(d-1, d-1))= \mathrm{H}^{0}(Q, \mathcal{O}_{Q}(0, d-2)) $$
 is also surjective. Hence a general member of $ \mathrm{H}^{0}(Q, \mathcal{O}_{Q}(0, d-2))$ can be realized as such an $R$. In particular, the base locus of $(d-1)H-E$ is contained in $E$.

 Let $p\in C$ be any point, we would like to show $(d-1)H-E$ has no base locus in $E$ over $p$. To do this, take $S'=Q\cup Z$ where $Z$ is any surface of degree $d-3$ that does not contain $p$. It suffices to show over $p$, the proper transforms of $S$ and $S'$ are disjoint, which is equivalent to $T_{p}Q\neq T_{p}S$. Since $R$ is a general member of $ \mathrm{H}^{0}(Q, \mathcal{O}_{Q}(0, d-2))$, we may assume it consists of $d-2$ disjoint lines $L_{1}, \cdots, L_{d-2}$ and $p\not\in R$. Let $L\in  \mathrm{H}^{0}(Q, \mathcal{O}_{Q}(0, 1))$ be a line that contains $p$. Since $L\subset Q$, $T_{p}L\subset T_{p}Q$. However, note that $L\cap S$ contains $(d-1)$ distincts points $(L\cap L_{1}), \cdots, (L\cap L_{d-2}), p$ and $\mathrm{deg}(S)=d-1$, we have $T_{p}L\not\subset T_{p}S$. Hence $T_{p}Q\neq T_{p}S$. 
\end{proof}

\subsection{Blow up $ \mathbb{P}^{3}$ along general rational curves of small degrees}

Let $\pi: Q_{d}= BL_{C_{d}}(\mathbb{P}^{3}) \rightarrow \mathbb{P}^{3}$ where $C_{d}$ is a general rational curve of degree $d$. In this subsection, we compute the cones of effective divisors in $Q_{d}$ for $1\leq d \leq 6$. Let $j_{*}: E \hookrightarrow Q_{d}$ be the exceptional divisor and $h_{1}\in \mathrm{Pic}(E)$ be the fiber class of $\pi$. 

\begin{proposition}\label{lowdegree}
We have
\begin{eqnarray*}
 \mathrm{Eff}^{1}(Q_{1})& = &\langle E, H-E \rangle,\\
\mathrm{Eff}^{1}(Q_{2})& = & \langle E, H-E \rangle,\\
\mathrm{Eff}^{1}(Q_{3})& = & \langle E, 2H-E \rangle,\\
\mathrm{Eff}^{1}(Q_{4})& = &\langle E, 2H-E \rangle,\\
\mathrm{Eff}^{1}(Q_{5})& = & \langle E, 8H-3E \rangle,\\
\mathrm{Eff}^{1}(Q_{6})& = & \langle E, 3H-E \rangle.
\end{eqnarray*}
\end{proposition}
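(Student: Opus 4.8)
The plan is to determine, for each degree $d$ with $1 \le d \le 6$, the two extremal rays of $\mathrm{Eff}^1(Q_d) = \mathbb{R}_{\geq 0}E \oplus \mathbb{R}_{\geq 0}(aH - bE)$ inside $\mathrm{Pic}(Q_d)_{\mathbb{R}} \cong \mathbb{R}H \oplus \mathbb{R}E$, which is two-dimensional. The ray spanned by $E$ is always extremal since $E$ is $\pi$-contracted, so the entire content is identifying the second extremal ray $aH - bE$. The strategy splits into two halves for each $d$: first exhibit an effective divisor of class $aH - bE$ (upper bound on the cone), then show the dual curve class is nef, or argue directly that no effective divisor has smaller slope $b/a$ (lower bound). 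Concretely, an effective divisor of class $aH-bE$ is the proper transform of a degree-$a$ surface in $\mathbb{P}^3$ passing through $C_d$ with multiplicity $\geq b$ along $C_d$; for a \emph{general} rational curve of degree $d$ one expects the minimal-degree surface containing $C_d$ to be smooth (multiplicity $1$) once $d$ is large enough, which is why $Q_3, Q_4, Q_6$ give $\langle E, \lceil \text{(min deg)} \rceil H - E\rangle$ with min degrees $2,2,3$ respectively, while $Q_1, Q_2$ give $H - E$ (lines and conics lie on planes) and $Q_5$ is the genuinely exceptional case.

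For the effective side, I would compute, via the exact sequence $0 \to I_{C_d}(aH) \to \mathcal{O}_{\mathbb{P}^3}(aH) \to \mathcal{O}_{C_d}(aH) \to 0$, the minimal $a$ for which $\mathrm{h}^0(\mathbb{P}^3, I_{C_d}(aH)) > 0$: since $\mathrm{h}^0(\mathcal{O}_{\mathbb{P}^3}(aH)) = \binom{a+3}{3}$ and $\mathrm{h}^0(\mathcal{O}_{C_d}(aH)) = ad+1$ (as $C_d \cong \mathbb{P}^1$), and for a general $C_d$ the restriction map has maximal rank, the minimal such $a$ is the least $a$ with $\binom{a+3}{3} > ad+1$. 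This gives $a = 1$ for $d \le 2$, $a = 2$ for $d = 3,4$ (since $\binom{5}{3}=10 > 2d+1$ iff $d \le 4$), and $a = 3$ for $d = 5, 6$ (since $\binom{6}{3} = 20 > 3d+1$ iff $d \le 6$). For $d = 5$ one must be more careful: a general quintic is known not to lie on a cubic in the na\"ively expected way giving multiplicity one — rather the relevant extremal divisor is $8H - 3E$, so I would instead exhibit explicitly an octic singular to order $3$ along $C_5$ (e.g.\ produced from cubics through $C_5$ in $\mathbb{P}^3$ meeting along further components, or via the secant/trisecant geometry of a general quintic), and verify the multiplicity computation. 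For the nef/lower-bound side, for $d = 3, 4, 6$ I would show the dual class $(a-1)H - E$ (equivalently $H^2 - (a-1)j_*(h_1)$ on the curve side) is nef: this follows because the degree-$a$ surfaces through $C_d$ move in a base-point-free-enough linear system — more precisely, one shows their proper transforms cover $Q_d$ and meet a general curve nonnegatively, using generality of $C_d$ to control the base locus along $E$. For $d = 1, 2$ the dual class is $H^2 - 0 = H^2$ or one checks $H - E$ is $v$-contracted under the projection from the line/conic, exactly as in Propositions \ref{effW} and \ref{effconic}.

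The step I expect to be the main obstacle is the $d = 5$ case, both halves of it. On the effective side, proving that the minimal slope is achieved by $8H - 3E$ rather than something like $3H - E$ requires knowing that a general rational quintic in $\mathbb{P}^3$ does \emph{not} lie on a smooth (or indeed any reduced) cubic surface — equivalently that the cubics containing $C_5$ all share a common component, or that no cubic contains $C_5$ at all, together with a precise count of the multiplicity along $C_5$ of the minimal-degree hypersurface that does work. This is a delicate piece of classical projective geometry (degeneracy loci / the theory of the Hilbert scheme of rational space curves), and I would likely isolate it as its own lemma analyzing $\mathrm{h}^0(I_{C_5}(aH))$ and the singularities forced on its general member. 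On the lower-bound side for $d=5$, showing $8H - 3E$ is genuinely extremal — i.e.\ that no effective divisor has slope $b/a < 3/8$ — requires ruling out, for every $(a,b)$ with $b/a < 3/8$, the existence of a degree-$a$ surface with multiplicity $\geq b$ along $C_5$; I would handle this by an intersection-theoretic argument, pairing a candidate class against a suitable effective curve (e.g.\ a secant or trisecant line of $C_5$, whose proper transform has an explicit class computable from Proposition \ref{chowY}-style relations) and showing negativity. The remaining cases $d = 1, 2, 3, 4, 6$ I expect to be routine adaptations of the arguments already given in Propositions \ref{effW}, \ref{effconic}, \ref{effY}.
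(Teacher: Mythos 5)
Your framework for $d \le 4$ (effectivity of $aH-E$ via the count $h^0(I_{C_d}(aH)) \ge \binom{a+3}{3} - (ad+1)$, lower bounds by pairing with secant lines) matches the paper, but your treatment of $d=5$ rests on a false premise, and you underestimate $d=6$. A general rational quintic \emph{does} lie on a cubic surface: $h^0(I_{C_5}(3H)) \ge 20 - 16 = 4 > 0$, these cubics cannot all share a component containing $C_5$ (a general $C_5$ lies on no quadric or plane), and they are generically smooth along $C_5$, so $3H-E$ is honestly effective for $d=5$. The reason the extremal ray is $8H-3E$ is not that $3H-E$ fails to be effective, but that the trisecant surface $T(C_5)$ --- the union of all trisecant lines, of degree $8$ by Berzolari's formula $\tfrac{(d-1)(d-2)(d-3)}{3}-(d-2)g$ and of multiplicity $3$ along $C_5$ --- has class $8H-3E$, with ratio $3/8 > 1/3$, hence lies outside $\langle E, 3H-E\rangle$. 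The dichotomy your ``pair against a trisecant line'' sketch does not articulate is: an irreducible surface of class $aH-bE$ either misses some trisecant line, forcing $a-3b\ge 0$, or contains every trisecant line and therefore equals $T(C_5)$. So the only effective class with $b/a>1/3$ is $[T(C_5)]$ itself, which is consequently extremal. Your proposed alternative of showing ``no effective divisor has slope $b/a<3/8$'' by negativity against a curve cannot work as stated, since $3H-E$ is effective with slope $1/3<3/8$.

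The same mechanism is needed for $d=6$, which you dismiss as routine: nefness of the trisecant line class $H^2-3j_*(h_1)$ (equivalently $a\ge 3b$ for all effective $aH-bE$) must be checked against the one surface that contains all trisecant lines, namely $T(C_6)$. This forces the computation $[T(C_6)] = 20H-6E$: degree $20$ from Berzolari, and multiplicity $6$ along $C_6$ obtained by projecting from a general point of $C_6$, identifying trisecants through that point with the six nodes of the resulting rational plane quintic, and verifying via a tangent-space computation in the Grassmannian that each is counted with multiplicity one. Only because $6/20 < 1/3$ does $3H-E$ remain extremal for $d=6$, whereas the analogous count for $d=5$ (three nodes of a rational plane quartic) gives multiplicity $3$ and tips the balance the other way. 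This degree-and-multiplicity computation for the trisecant surface is the technical heart of the cases $d=5,6$ and is the main missing ingredient in your proposal.
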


\begin{proof}
  In all cases, $E$ is contracted by the blow up map, hence it generates an extremal ray.
  The claim for $Q_{1}$ follows from Proposition \ref{effW}. The claim for $Q_{2}$ follows from Proposition \ref{effconic}. The claim for $Q_{3}$ follows from Proposition \ref{effY}.

  For $Q_{4}$, consider the following exact sequence
  $$0 \longrightarrow I_{C_{4}}(2H) \longrightarrow \mathcal{O}_{\mathbb{P}^{3}}(2H) \longrightarrow \mathcal{O}_{C_{4}}(2H) \longrightarrow 0. $$
  Since $\mathrm{h}^{0}(\mathcal{O}_{\mathbb{P}^{3}}(2H))=10$ and $\mathrm{h}^{0}(\mathcal{O}_{C_{4}}(2H))=\mathrm{h}^{0}(\mathcal{O}_{\mathbb{P}^{1}}(8))=9$, there is a quadric surface $Q$ containing $C_{4}$. Let $X\subset Q_{4}$ be an irreducible surface whose class is $aH-bE$. Since $C_{4}$ is non-degenerate, there exists a secant line $L$ of $C_{4}$ that is not contained in $X$. Hence $[X]\cdot [L]\geq 0$, we get $a\geq 2b$. Since the proper transform of $Q$ has class $2H-E$, it generates an extremal ray.

  We first deal with $Q_{6}$. Consider the following exact sequecne
  $$0 \longrightarrow I_{C_{6}}(3H) \longrightarrow \mathcal{O}_{\mathbb{P}^{3}}(3H) \longrightarrow \mathcal{O}_{C_{6}}(3H) \longrightarrow 0. $$
  Since $\mathrm{h}^{0}(\mathcal{O}_{\mathbb{P}^{3}}(3H))=20$ and $\mathrm{h}^{0}(\mathcal{O}_{C_{6}}(3H))=19$, $C_{6}$ is contained in a cubic surface $S$.
  
  Let $T=T(C_{6})$ be the trisecant surface of $C_{6}$. Let $X\subset Q_{6}$ be an irreducible surface of class is $aH-bE$. If $X\neq T$, then there is a tri-secant line $L$ of $C_{6}$ that is not contained in $X$. We have
  $$[X]\cdot [L]=(aH-bE)\cdot (H^{2}-3j_{*}h_{1})=a-3b\geq 0. $$
  Otherwise, $X=T$. Hence the cone of effective divisors is generated by $E, 3H-E, [T]$. A classical formula of Berzolari \cite{LB82, GP82} computes the degree of the trisecant surface of a smooth space curve $C_{d, g}$ of degree $d$ and genus $g$: 
  $$\mathrm{deg}(T(C_{d,g}))=\dfrac{(d-1)(d-2)(d-3)}{3}-(d-2)g. $$
  In our case, $\mathrm{deg}(C_{6})=20$. To compute the generic multiplicity of $T$ along $C_{6}$, let $l$ be a general line that meets $C_{6}$ at a general point $p\in C_{6}$. Let $\overline{C_{6}}\subset \mathbb{P}^{2}$ be the image of $C_{6}$ under a general projection from $p$. Trisecant lines of $C_{6}$ that meet $p$ are in one to one correspondence with nodes of $\overline{C_{6}}$. Since $\overline{C_{6}}$ is a rational curve of degree 5, it has 6 nodes. We need to show that the 6 trisecant lines that meet $l$ are counted with multiplicity 1.
  
  Let $C_{6}'\subset \mathbb{P}^{6}$ be a rational normal curve and $\Lambda \subset \mathbb{P}^{6}$ be a general $\mathbb{P}^{2}$. Let $p' \in C_{6}'$ be a general point, $\widetilde{l}\subset \mathbb{P}^{6}$ be a general $\mathbb{P}^{4}$ that contains $\Lambda, p' $. Then under the projection from $\Lambda$, $C_{6}$ is the image of $C_{6}'$, $p$ is the image of $p'$, and $\widetilde{l}$ is the pre-image of a general line $l\subset \mathbb{P}^{3}$ that meets $p$. Trisecant lines $L\subset \mathbb{P}^{3}$ of $C_{6}$ are in one to one correspondence to trisecant planes $L' \subset \mathbb{P}^{6}$ that meet $\Lambda$, and $L\cap l\neq \emptyset$ if and only if $\mathrm{dim}(L'\cap \widetilde{l})\geq 1$. Let
    \begin{eqnarray*}
    \Psi &=& \{\text{Trisencant planes of }C_{6}'\}\subset G(3,7), \\
    \sigma_{2,1}(\Lambda, \widetilde{l})& = &\{L'\in G(3,7): L'\cap \Lambda \neq \emptyset, \mathrm{dim}(L'\cap \widetilde{l})\geq 1\}.
    \end{eqnarray*}
    Then $\Psi \cap \sigma_{2,1}(\Lambda, \widetilde{l})$ is in one to one correspondence with trisecant planes of $C_{6}'$ that meets $\Lambda$. By a similar tangent space computation as in the proof of Lemma \ref{nefn-1}, we see that $\Psi\cap \sigma_{2,1}(\Lambda, \widetilde{l})$ is reduced. Hence the generic multiplicity of $T$ along $C_{6}$ is 6, its class is $20H-6H$. Since $\frac{6}{20}<\frac{1}{3}$, we have $\mathrm{Eff}^{1}(Q_{6})=\langle E, 3H-E \rangle$.

    By a similar argument, the cone of effective divisors of $Q_{5}$ is generated by $E, 3H-E, [T(C_{5})]$. In this case $[T(C_{5})]=8H-3E$. Since $\frac{3}{8}>\frac{1}{3} $, we have $\mathrm{Eff}^{1}(Q_{5})=\langle E, 8H-3E \rangle$. 
  
\end{proof}

\section{Blow up $\mathbb{P}^{3}$ along curves of high degree in a fixed surface}

Let $S_{d}$ be a smooth surface of degree $d$ and $C_{e}\subset S_{d}$ be a smooth curve of degree $e$, which is not a rational curve in general.  Let $\pi: Z(C_{e})=\mathrm{BL}_{C_{e}}\mathbb{P}^{3} \rightarrow \mathbb{P}^{3}$ be the blow up and $j: E \hookrightarrow Z(C_{d})$ be the exceptional divisor. In this section we compute the cone of effective cycles in $Z(C_{d})$ when $e$ is large compared to $d$.

\begin{proposition}\label{effZ}
  When $e\geq d^{2}$, we have
  $$\mathrm{Eff}^{1}(Z(C_{d}))=\langle dH-E, E \rangle. $$
\end{proposition}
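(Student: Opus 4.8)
The plan is to show that the two given classes $dH-E$ and $E$ are effective and span extremal rays, and that no other effective divisor class lies outside the cone they generate. That $E$ spans an extremal ray is immediate since $E$ is contracted by $\pi$. That $dH-E$ is effective follows from the fact that the proper transform of $S_d$ (which contains $C_e$, and hence is blown up with multiplicity exactly one along $C_e$ since $S_d$ is smooth) has class $dH-E$. So the content is the reverse inclusion: every irreducible effective divisor $X$ on $Z(C_e)$ whose class is $aH-bE$ with $b>0$ must satisfy $a \geq db$, i.e. lies in the cone $\langle dH-E, E\rangle$.

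The key step is to produce, for any irreducible surface $X \neq S_d'$ (the proper transform of $S_d$), a curve class $\gamma$ with $\gamma \cdot E > 0$ and $\gamma \cdot H$ small enough that $\gamma \cdot (aH-bE) \geq 0$ forces $a \geq db$. The natural candidate is (the proper transform of) a suitable $d$-secant or high-secant line of $C_e$ — a line $L \subseteq \mathbb{P}^3$ meeting $C_e$ in $d$ points, whose proper transform has class $H^2 - d\,j_*(h_1)$, so that intersecting with $aH-bE$ gives $a - db \geq 0$ provided $L \not\subseteq X$. For this to work I need two things: first, that $C_e$ admits $d$-secant lines at all, and second, that these sweep out enough of $Z(C_e)$ that a general one avoids any given surface $X \neq S_d'$. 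Both follow from $C_e \subseteq S_d$: since $S_d$ has degree $d$, a line contained in $S_d$ meets $C_e$ in $C_e \cdot H|_{S_d}$... more to the point, a general line in $\mathbb{P}^3$ meets $S_d$ in $d$ points, and one can arrange a line meeting $C_e$ in many points by working inside $S_d$; the hypothesis $e \geq d^2$ is what guarantees (via the genus/degree numerics of curves on $S_d$, or a parameter count) that such multisecant lines exist in abundance and move in a family covering a divisor not equal to $S_d'$. Concretely: the lines in $\mathbb{P}^3$ that are $d$-secant to $C_e$ form a family whose union is not contained in $S_d$ (else $C_e$ would be covered by lines of $S_d$, forcing $S_d$ to be ruled and $e$ bounded, contradicting $e \geq d^2$ for the relevant $d$), so a general such line $L$ satisfies $L \not\subseteq X$ unless $X \supseteq$ the whole multisecant sweep, a divisor one checks to be $S_d'$ itself.

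The main obstacle is exactly this last point: showing the proper transforms of the $d$-secant (or $\geq d$-secant) lines of $C_e$ sweep out a divisor which, when $X$ avoids a general member, pins down $X = S_d'$, and making the numerical hypothesis $e \geq d^2$ do its job. I expect one handles this by the incidence-correspondence argument already used in the proof of Lemma \ref{nefn-1}: form $\Gamma = \{(p,L) : L \text{ is a line with } |L \cap C_e| \geq d,\ p \in L\}$, bound its dimension, project to $\mathbb{P}^3$, and rule out the degenerate possibility that all such lines pass through a fixed locus or lie on $S_d$; the bound $e \geq d^2$ is what makes the dimension count come out so that the image is a divisor and a general fiber behaves well. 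Once the existence and covering properties of these multisecant lines are in hand, the inequality $a \geq db$ is immediate, giving $\mathrm{Eff}^1(Z(C_e)) \subseteq \langle dH-E, E\rangle$, and combined with effectivity of both generators we conclude equality.
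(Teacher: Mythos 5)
There is a genuine gap, and it is exactly at the point you flag as ``the main obstacle'': the $d$-secant lines your argument needs do not exist in the required abundance, and for most $d$ they do not exist at all. The family of $k$-secant lines to a space curve has expected dimension $4-k$, so for $d\geq 5$ a curve (even one lying on $S_d$) typically has \emph{no} $d$-secant lines, and for $d=4$ it has only finitely many, which certainly do not ``sweep out a divisor.'' The hypothesis $e\geq d^2$ does not rescue this: adding intersection conditions shrinks the family of lines, it does not enlarge it, and your heuristic that one can ``arrange a line meeting $C_e$ in many points by working inside $S_d$'' would require lines contained in $S_d$, which a general surface of degree $d\geq 4$ does not contain. (The paper does use genuine trisecant lines, but only in the low-degree cases of Proposition \ref{lowdegree}, where $3$-secant lines really do form a one-parameter family sweeping out the trisecant surface.) So the central object of your proof is missing, and the inequality $a\geq db$ is never established.

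The paper's actual proof replaces multisecant \emph{lines} by highly secant \emph{curves of large degree on $S_d$ itself}: for $m\gg 0$ it takes a smooth irreducible $D\in|mH-C|_{S}$ meeting $\widetilde{X}\cap S$ properly (possible since $X\neq S$), so that $0\leq D\cdot\widetilde{X}=(md-e)a-(D\cdot C)_{S}\,b$. The intersection number $(D\cdot C)_S=(m+d-4)e-(2p_g(C)-2)$ is computed by adjunction on $S$, the genus is bounded by $2p_g(C)-2\leq e(e-3)$ via a general projection to $\mathbb{P}^2$, and letting $m\to\infty$ the ratio of secancy to degree tends to $e/d$, giving $a\geq (e/d)b\geq db$ precisely when $e\geq d^2$. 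This is the same philosophy as your plan --- intersect $\widetilde X$ with a curve class whose ratio of $E$-multiplicity to $H$-degree is at least $d$ --- but realized with curves that provably exist; if you want to repair your proof, this substitution is the missing idea.
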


\begin{proof}
  Since $E$ is contracted by $\pi$, it spans an extremal ray. Hence it suffices to consider proper transforms $\widetilde{X}$ of irreducible surfaces $X\subset \mathbb{P}^{3}$ that is distinct from $S$. Write $[\widetilde{X}]=aH-bE$. We would like to show $a\geq db$.

  Take an integer $m\gg 0$. Since $X\neq S$, there is a smooth irreducible curve $D\in |mH-C|_{S}$ that intersects $\widetilde{X}\cap S$ transversely. Let $h_{1}\in \mathrm{Pic}(E)$ be the fiber class of $\pi$. We have
  \begin{equation}\label{Z1}
    0\leq D\cdot \widetilde{X} = [(md-e)H^{2}-j_{*}(h_{1})(D\cdot C)_{S}]\cdot (aH-bE)=(md-e)a-(D\cdot C)b.
    \end{equation}
  Note that $K_{S}=(K_{\mathbb{P}^{3}}+[S])|_{S}=(d-4)H$, we have
  \begin{equation}\label{Z2}
    (D\cdot C)_{S}=(mH-C)\cdot C=me-[(2p_{g}(C)-2)-K_{S}\cdot C]=(m+d-4)e-(2p_{g}(C)-2).
    \end{equation}
    Let $f: \mathbb{P}^{3}\dashrightarrow \mathbb{P}^{2}$ be the projection from a general point. Then
    $$2p_{g}(C)-2=2p_{g}(f(C))-2\leq 2p_{a}(f(C))-2=e(e-3). $$
    By (\ref{Z1}) and (\ref{Z2}), we have
    \[
      a  \geq  \left(\dfrac{me-K_{S}\cdot C -(2p_{g}-2)}{md-e}\right) \cdot b
       \geq \left(\dfrac{me -(d-4)e -e(e\cdot 3)}{md}\right) \cdot b.
    \]
    Taking limit as $m \rightarrow \infty$, we see that $a\geq (e/d)\cdot b$. When $e\geq d^{2}$, we have $a\geq db$. 
\end{proof}

\bibliographystyle{alpha}
\bibliography{cones.bib}

\end{document}